\documentclass[11pt, reqno]{amsart}
\usepackage{amsmath, amsthm, amscd, amsfonts, amssymb, graphicx}
\usepackage{ragged2e}
\usepackage[bookmarksnumbered, colorlinks, plainpages]{hyperref}
\usepackage{cite}
\newtheorem{theorem}{Theorem}[section]
\newtheorem{lemma}[theorem]{Lemma}

\newtheorem{Result}[theorem]{Result}
\theoremstyle{definition}
\newtheorem{definition}[theorem]{Definition}

\theoremstyle{remark}

\numberwithin{equation}{section}

\begin{document}
\setcounter{page}{1}

\centerline{}

\centerline{}

\title[]{Extension of Lohwater\,--\,Pommerenke's Theorem for strongly-normal Maps}

\author{Gopal Datt}
\address{Department of Mathematics, Babasaheb Bhimrao Ambedkar University, Lucknow, India}
\email{\textcolor[rgb]{0.00,0.00,0.84}{ggopal.datt@gmail.com, gopal.du@bbau.ac.in}}
\author{Rahul Gogoi}
\address{Department of Mathematics, University of Delhi, Delhi, India}
\email{\textcolor[rgb]{0.00,0.00,0.84}{rgogoi1729@gmail.com, rgogoi@maths.du.ac.in}}

\subjclass[2020]{30C45, 30C55, 30D40, 30D45, 30G30, 30H05, 31A05, 32A19, 32H30.}
\keywords{normality, $\varphi$-normality, strong normality, complex projective space, harmonic 
mapping, logharmonic mapping, holomorphic curves, Bloch mappings, little-Bloch mappings, 
Lohwater\,--\,Pommerenke theorem}

\begin{abstract}
We introduce strong normality for holomorphic curves and logharmonic mappings, extending classical 
normality concepts. We establish an extension of the rescaling characterization due to Lohwater and Pommerenke for
\textit{not strongly-normal maps}. In addition, we also study the \emph{Bloch mappings}, \emph{little-Bloch mappings}
and prove Zalcman\,--\,Pang type rescaling results for them. The framework is further
extended to \textit{strongly $\varphi$-normal mappings}, yielding a unified treatment across these settings.
\end{abstract}
\maketitle
\section{Introduction and Main Results}

The notion of normality in complex analysis originates from the study of convergence phenomena of 
holomorphic functions. Beginning with the foundational work of Weierstrass and leading to 
Montel’s theory of normal families, the idea of compactness through boundedness and equicontinuity 
became central to function theory. In the 1934, Yosida \cite{yosida-oacomf-1934} first shifted this 
perspective to individual meromorphic functions, a viewpoint later enhanced by Noshiro \cite{noshiro-ctttomfituc-1938}, which was formalized by Lehto and 
Virtanen \cite{lehto-bbanmf-1957} in 1957, who introduced the modern concept of normal meromorphic functions via conformal invariance and the spherical 
derivative. Noshiro \cite{noshiro-ctttomfituc-1938} provided a characterization of normal functions in terms of the spherical derivative. He showed that a 
function $f$ meromorphic in the unit disc $\mathbb{D}:=\{z\in\mathbb C:|z|<1\}$ is \emph{normal} if and only if \begin{equation*}
\sup_{z\in\mathbb{D}}(1-|z|^2)f^\#(z)<\infty ,   
\end{equation*} where $\displaystyle f^\#(z)=\frac{|f'(z)|}{1+|f(z)|^2}$ is the spherical derivative.\\

Later, Eremenko \cite{eremenko-nhcfprtps-2007} widened the scope of normality beyond meromorphic functions on $\mathbb D$ to holomorphic curves from
$\mathbb D$ into complex projective spaces ($\mathbb P^n(\mathbb C)$), thereby placing the theory within a broader geometric framework. Namely, a holomorphic curve $f:\mathbb D\to \mathbb P^n(\mathbb C)$ is \emph{normal} if and only if 
\begin{equation*}\sup_{z\in\mathbb D}(1-|z|^2)\|f'(z)\|_{FS}<\infty,\end{equation*}
where $\|f'(z)\|_{FS}$ is the Fubini\,--\,Study derivative which will be discussed in the preliminaries section.
\smallskip

Building upon this foundation, further refinements of the concept have been introduced. In particular, Chen and Gauthier \cite{chen-osnf-1996} proposed the 
notion of \textbf{strongly-normal} functions, requiring the sharper condition
\begin{equation*}(1-|z|^2)f^\#(z)\to 0,\ \text{as}\ z\to \partial \mathbb{D},\end{equation*}
where $\partial\mathbb D=\{z\in\mathbb C:|z|=1\}$. 
\smallskip

Following this, we now introduce the strong normality of holomorphic curves, which
is inspired by the definition of strongly-normal functions given by Chen and Gauthier \cite{chen-osnf-1996}.
\begin{definition}
A holomorphic curve $f:\mathbb{D}\to \mathbb P^n(\mathbb C)$ is \emph{strongly-normal} if 
\begin{equation*}(1-|z|^2)\|f'(z)\|_{FS}\to 0,\ \text{as}\ z\to\partial\mathbb{D}.\end{equation*}
\end{definition}

Recently, in 2019, Arbeláez et al. \cite{arbelaez-nhm-2019} introduced the concept of a normal function in the context of harmonic mappings. Subsequently, in 
2020, Jiang \cite{jiang-nolm-2020} extended this idea by defining a notion of normal function for logharmonic 
mappings. These definitions closely follow those introduced by Noshiro and Eremenko, with the spherical derivative adapted to the corresponding setting. \smallskip

To extend this notion beyond these, we briefly recall that a $\mathbb C$-valued mapping $f$ in $\mathbb D$ 
is harmonic if there exist holomorphic functions $g$, $h$ in $\mathbb D$ where $f(z)=h(z)+\overline{g(z)},\ z\in\mathbb D$. Furthermore, a $\mathbb C$-valued 
mapping $f$ defined in $\mathbb D$ is logharmonic if it satisfies the nonlinear elliptic partial differential equation
\begin{equation*}
\overline{f_{\bar z}} = a_f(z)\,\left(\frac{f_z}{f}\right){\overline{f}}, \tag{i} \label{NLEPDE}
\end{equation*}
where the function $a_f : \mathbb D \to \mathbb{C}$ is analytic and satisfies $|a_f(z)| < 1,\  \text{for all } z \in \mathbb D.$%
\smallskip
    
The above equation describes a class of mappings that generalizes analytic functions by allowing a 
controlled dependence on the conjugate variable. In particular, when $a_f \equiv 0$, 
the equation reduces to $f_{\bar z} = 0$ and hence $f$ is analytic in $\mathbb D$.
\smallskip

If $f$ is a non-vanishing logharmonic mapping in $\mathbb D$, then $f$ admits the representation
\begin{equation*}
f(z) = h(z)\,\overline{g(z)},\ z\in\mathbb D\tag{ii} \label{NVLHM}
\end{equation*}
where $h$ and $g$ are holomorphic functions in $\mathbb D$. \smallskip

If $f$ is a non-constant logharmonic mapping in $\mathbb D$ and vanishes only at the origin $z=0$, then $f$ can be written as
\begin{equation*}
f(z) = z^{m_0} |z|^{2\beta\,m_0} h(z)\overline{g(z)},\ z\in\mathbb D \tag{iii} \label{VLHM}
\end{equation*}
where $m_0$ is non-negative integer which represents the order of the zero at the origin, $\Re(\beta) > -\tfrac{1}{2}$,
$h$ and $g$ are holomorphic functions in $\mathbb D$, $g(0) = 1$ and $h(0) \neq 0$. 
It follows that $f(0) \neq 0$ if and only if $m_0 = 0$, in which case the representation reduces to 
(\ref{NVLHM}). Moreover, if $f$ is univalent in $\mathbb D$ and vanishes at the origin, then 
necessarily $m_0 = 1$ and thus $f$ assumes the form
$
f(z) = z\,|z|^{2\beta} h(z)\overline{g(z)}.
$\smallskip

Chen and Gauthier \cite{chen-osnf-1996} also introduced the notion of strong normality for harmonic mappings using an 
analogous formulation. We now proceed to define strong normality in logharmonic mappings. 
\begin{definition}
A logharmonic mapping $f=z|z|^{2\beta}h\overline{g}$ in $\mathbb{D}$ is \emph{strongly-normal} if \begin{equation*}(1-|z|^2)f^\diamondsuit(z)\to0,\ \text{as}\ z\to\partial\mathbb D,\end{equation*}
where \begin{equation*}f^\diamondsuit(z)=\dfrac{|f_z(z)|+|f_{\overline{z}}(z)|}{1+|f(z)|^2}.\end{equation*}
\end{definition}
It is interesting to note that the notion of a \emph{$\varphi$-normal function} was introduced by 
Aulaskari and R\"atty\"a \cite{aulaskari-pompnf-2011}, which defines a class,
significantly larger than the class of normal functions, in $\mathbb{D}$. Recently, Bohra et al. 
\cite{bohra-lfptfpnhm-2025} introduced the analogue of \emph{$\varphi$-normal function} 
for $\mathbb C$-valued \emph{harmonic mappings}. Note that the term $\varphi$ here denotes a $\mathbb R$-valued 
function whose growth is greater than that of growth of $1/(1-r^2)$ where $0\leq r<1$ and it admits a regularity condition close to $1$. \smallskip

Recall that an increasing function $\varphi : [0, 1)\to (0,\infty)$ is smoothly increasing if
\begin{equation*}\varphi(r)(1-r)\to \infty\ \text{ as }\ r \to 1\end{equation*}
and
\begin{equation*}R_a(z) := \frac{\varphi(|a + (z/\varphi(|a|))|)}
{\varphi(|a|)}\to 1\end{equation*} as $|a|\to 1$ uniformly on compact subsets of $\mathbb C$. With 
these preparations, we are now in a position to 
define \emph{$\varphi$-normal logharmonic mappings}.
\begin{definition}
A logharmonic mapping $f=z|z|^{2\beta}h\overline{g}$ in $\mathbb{D}$ is
$\varphi$-\emph{normal} if \begin{equation*}
\sup_{z\in\mathbb{D}}\frac{f^\diamondsuit(z)}{\varphi(|z|)}<\infty.
\end{equation*}
\end{definition}
In a similar way, we introduce \emph{strongly $\varphi$-normal harmonic mappings} and \emph{strongly $\varphi$-normal logharmonic mappings}.
\begin{definition}
A harmonic mapping $f=h+\overline{g}$ in $\mathbb D$ is \emph{strongly} $\varphi$-\emph{normal} if
\begin{equation*}\frac{f^\star(z)}{\varphi(|z|)}\to 0,\ \text{as}\ z\to \partial\mathbb D,\end{equation*}
 where \begin{equation*}f^\star(z)=\dfrac{|h^\prime(z)|+|g^\prime(z)|}{1+|f(z)|^2}.\end{equation*}
\end{definition}
\begin{definition}
A logharmonic mapping $f=z|z|^{2\beta}h\overline{g}$ in $\mathbb D$ is \emph{strongly} $\varphi$-\emph{normal} if
\begin{equation*}\frac{f^\diamondsuit(z)}{\varphi(|z|)}\to 0,\ \text{as}\ z\to \partial\mathbb D.\end{equation*}
\end{definition}
\smallskip
 Let us now focus on another class of functions named \emph{Bloch functions}. The concept of Bloch 
 functions was first studied in a systematic manner by Seidel and Walsh \cite{seidel-otdofaitucatrouaopv-1942} and was subsequently developed further by 
 several authors, including Pommerenke \cite{pommerenke-obf-1970}, Yamashita \cite{yamashita-cfftbb-1980} and Colonna \cite{colonna-banfatr-1989}. Colonna stated that a holomorphic function $f$ in $\mathbb D$ is \emph{Bloch} if 
 \begin{equation*}
     \sup_{z\in\mathbb D}(1-|z|^2)|f'(z)|<\infty.
 \end{equation*}
 
 In a similar way, Colonna formalized the definition in the harmonic setting 
 and introduced the class of \emph{Bloch harmonic mappings} 
 \cite{colonna-tbcobhm-1989}. We also recall the notion of \emph{little-Bloch functions}, introduced by Anderson et al. in \cite{anderson-obfanf-1974},
 which serves as a counterpart to strong normality by capturing vanishing boundary 
 behavior. That is, a holomorphic function $f$ in $\mathbb D$ is \emph{little-Bloch} if
 \begin{equation*}(1-|z|^2)|f'(z)|\to 0,\ \text{as}\ z\to\partial\mathbb D.\end{equation*}
 Motivated by this 
 perspective, Aljunaid and Colonna \cite{aljuain-cobtsohm-2019} introduced the notion of \emph{little-Bloch harmonic mappings} and Chang et al. 
 \cite{chang-csolmalbtc-2026} gave us the notion of \emph{Bloch logharmonic mappings} and \emph{little-Bloch logharmonic mappings}.\smallskip
 
We begin by recalling foundational rescaling principles for normality in various settings. In 
1973, Lohwater and Pommerenke \cite[Theorem 1]{lohwater-onmf-1973} established a theorem that characterizes non-normal meromorphic functions
in terms of their behavior under suitable rescalings near $\partial\mathbb D$. A cornerstone in 
this direction is the Lohwater\,–\,Pommerenke theorem for holomorphic curves in projective space, established by Hahn.
\begin{Result}[{\cite[Theorem 4]{hahn-hdgosctonmf-2007}}]\label{R:LPNHC} 
A holomorphic curve $f:\mathbb{D}\to \mathbb P^n(\mathbb C)$ is normal if and only if there does not
exist any sequence $\{z_{\nu}\}\subset \mathbb{D}$, a sequence $\{\rho_{\nu} \}\subset(0,\infty),\ \rho_{\nu} \searrow 0$
and $\zeta\in \mathbb C$ such that $g_{\nu}(\zeta)=f(z_{\nu}+\rho_{\nu} \zeta)$ converges locally uniformly on $\mathbb C$ to a non-constant 
holomorphic curve $g:\mathbb C\to \mathbb P^n{(\mathbb C)}$. 
\end{Result}
Motivated by this rescaling paradigm, one may ask how such phenomena behave under stronger notions of 
normality. In this direction, Aulaskari and Wulan developed an analogue tailored to \emph{strongly-normal meromorphic functions},
incorporating a quantitative restriction on the scaling.
\begin{Result} [{\cite[Theorem 1]{aulaskari-avotlptfsnf-2001}}] \label{R:LPMF}
A function $f$ meromorphic in $\mathbb{D}$ is strongly-normal function
if and only if there does not exist a constant $R>0$; a sequence $\{z_{\nu}\}\subset \mathbb{D}$ with $|z_{\nu}|\to1$; 
a sequence $\{\rho_{\nu} \}\subset(0,\infty)$ where $\displaystyle\frac{\rho_{\nu} }{1-|z_{\nu}|}<\frac{1}{2R}$ such that $\{f(z_{\nu}+\rho_{\nu} \zeta)\}$ converges uniformly on every 
compact subset to a non-constant meromorphic function in $|\zeta|<R$.
\end{Result}

Bohra et al. introduced an analogous framework for \emph{$\varphi$-normal harmonic mappings}, thereby
broadening the scope of the theory beyond normal harmonic mappings.
\begin{Result}[{\cite[Theorem 1.3]{bohra-lfptfpnhm-2025}}] \label{R:LPPNHM}
A non-constant mapping $f$ harmonic in $\mathbb D$ is not $\varphi$-normal if and only if there exist sequences $\{z_{\nu}\}$, $\{\rho_{\nu} \}$ where $\rho_{\nu} >0$ as $\nu\to\infty$ such that 
\begin{equation*}\lim_{\nu\to\infty} f\left(z_{\nu}+\frac{\rho_{\nu} \zeta}{\varphi(|z_{\nu}|)}\right)=g(\zeta)\end{equation*}
uniformly on compact subsets in $\mathbb C$, where $g$ is a non-constant harmonic mapping.
\end{Result}
Guided by the above results, we now formulate a partial Zalcman\,–\,Pang type generalization of Result \ref{R:LPMF}, beginning with meromorphic functions.
\begin{theorem} \label{T:FPZPSNMF}
    If a function $f$ meromorphic in $\mathbb D$ is not strongly-normal, then there exist a constant 
    $R>0$; a sequence $\{z_{\nu}\}\subset \mathbb D$ with $|z_{\nu}|\to1$; and a sequence 
    $\{\rho_{\nu} \}\subset (0,\infty)$ where $\displaystyle\frac{\rho_{\nu} }{1-|z_{\nu}|}<\frac{1}{2R}$ 
    such that \begin{equation*}g_{\nu}(\zeta)=\rho_{\nu} ^{\alpha}f(z_{\nu}+\rho_{\nu} \zeta)\end{equation*}
    converges locally uniformly to a non-constant meromorphic function $g(\zeta)$ in $|\zeta|<R$ 
    such that $g^\#(\zeta)\le g^\#(0)\neq 0$, where $\alpha\geq 0$. 
\end{theorem}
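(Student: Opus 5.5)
The plan is to treat the case $\alpha=0$ in full and use it as the template for general $\alpha\ge 0$, where I expect the main difficulty. The method combines the Zalcman localization trick used by Aulaskari and Wulan for Result \ref{R:LPMF} with Marty's criterion.

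\textbf{Step 1 (choice of points and disks).} Since $f$ is not strongly-normal, there are $c>0$ and points $w_\nu\in\mathbb D$ with $|w_\nu|\to1$ and $(1-|w_\nu|)f^\#(w_\nu)\ge c$. Put $r_\nu=(1-|w_\nu|)/2$ and consider the closed disk $\overline{D}_\nu=\{|z-w_\nu|\le r_\nu\}$. On it let
\begin{equation*}
M_\nu=\max_{|z-w_\nu|\le r_\nu}(r_\nu-|z-w_\nu|)f^\#(z).
\end{equation*}
This maximum exists because $f^\#$ is continuous on $\mathbb D$ and the weight vanishes on the boundary circle. Let it be attained at $z_\nu$, and write $d_\nu=|z_\nu-w_\nu|$. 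Evaluating at the centre gives $M_\nu\ge r_\nu f^\#(w_\nu)\ge c/2$. Every point of $\overline{D}_\nu$ has $1-|z|\ge r_\nu$, so $|z_\nu|\to 1$.

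\textbf{Step 2 (rescaling and a priori bound).} Set $\rho_\nu=(r_\nu-d_\nu)/M_\nu$ and $g_\nu(\zeta)=f(z_\nu+\rho_\nu\zeta)$. Then $g_\nu^\#(0)=\rho_\nu f^\#(z_\nu)=1$. For $|\zeta|<M_\nu$ the point $w=z_\nu+\rho_\nu\zeta$ lies in $D_\nu$, and the maximality of $M_\nu$ together with the triangle inequality gives
\begin{equation*}
g_\nu^\#(\zeta)=\rho_\nu f^\#(w)\le\frac{\rho_\nu M_\nu}{r_\nu-d_\nu-\rho_\nu|\zeta|}=\frac{1}{1-|\zeta|/M_\nu}.
\end{equation*}
I would fix $R$ slightly below $c/4$. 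Then $M_\nu\ge 2R$ uniformly, so the family $\{g_\nu\}$ has uniformly bounded spherical derivative on compact subsets of $|\zeta|<R$ (indeed of $|\zeta|<2R$). Marty's criterion then yields a subsequence converging locally uniformly to a meromorphic $g$ with $g^\#(0)=1$. In particular $g$ is non-constant.

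For the scaling condition, $1-|z_\nu|\ge r_\nu$ gives $\rho_\nu/(1-|z_\nu|)\le (r_\nu-d_\nu)/(M_\nu r_\nu)\le 1/M_\nu\le 2/c<1/(2R)$. This is the required restriction.

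\textbf{Step 3 (the extremal property, main obstacle).} The bound of Step 2 passes to the limit as $g^\#(\zeta)\le \limsup_\nu (1-|\zeta|/M_\nu)^{-1}$. This equals $1=g^\#(0)$ exactly when $M_\nu\to\infty$, which happens automatically if $f$ is not even normal. The delicate case is when $f$ is normal but not strongly-normal, so that $M_\nu$ stays bounded.

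My first attempt there is to re-choose the base points: take $z_\nu$ (approximately) maximizing $(1-|z|^2)f^\#(z)$ over an annulus $1-\varepsilon_\nu<|z|<1$. I would then use $\rho_\nu=c_\nu(1-|z_\nu|^2)$ with $c_\nu=\bigl((1-|z_\nu|^2)f^\#(z_\nu)\bigr)^{-1}$, which is bounded because $f$ is normal. Near $z_\nu$ the factor $(1-|z|^2)$ is comparable to $(1-|z_\nu|^2)$ up to $1+O(\rho_\nu)$, and this should give the limiting inequality $g^\#\le g^\#(0)$ on $|\zeta|<R$. The limsup defining strong non-normality makes an exact near-maximizer available.

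\textbf{General $\alpha\ge 0$.} I would replace $f^\#$ by the Pang-type quantity
\begin{equation*}
\frac{(r-|z-w|)^{1+\alpha}|f'(z)|}{(r-|z-w|)^{2\alpha}+|f(z)|^2}
\end{equation*}
and repeat Steps 1 and 2. The required local bounds come from Pang's lemma, and the admissible range of $\alpha$ there is a genuine further constraint to check.
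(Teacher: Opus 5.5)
\paragraph{Step 3 fails, and so does the paper's proof at the same point.}
Your Steps 1 and 2 are sound. For $\alpha=0$ they reprove the Aulaskari--Wulan conclusion of Result~\ref{R:LPMF}, with $g^\#(0)=1$ but only $g^\#(\zeta)\le(1-|\zeta|/M)^{-1}$, where $M=\liminf M_\nu$. The gap is Step~3, and it cannot be closed.

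If $f$ is normal, a non-constant limit forces $\rho_\nu\asymp 1-|z_\nu|$. Otherwise $\rho_\nu=o(1-|z_\nu|)$ would give $g_\nu^\#(\zeta)\le C\rho_\nu/(1-|z_\nu|-\rho_\nu|\zeta|)\to 0$. In your re-choice, normality bounds $c_\nu$ away from $0$, which is the obstruction rather than a help: $c_\nu\to 1/L$ with $L=\limsup_{|z|\to1}(1-|z|^2)f^\#(z)>0$. Hence, along a subsequence with $z_\nu\to e^{i\theta}$,
\[
\frac{1-|z_\nu+\rho_\nu\zeta|^2}{1-|z_\nu|^2}\to 1-\kappa\operatorname{Re}(e^{-i\theta}\zeta),\qquad \kappa=2/L,
\]
which is not $1+O(\rho_\nu)$.

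What your near-maximizers actually give is $g^\#(\zeta)\bigl(1-\kappa\operatorname{Re}(e^{-i\theta}\zeta)\bigr)\le g^\#(0)$, with equality at $0$. So $\nabla\log g^\#(0)=\kappa e^{i\theta}\neq 0$ (as a vector), and $g^\#(te^{i\theta})>g^\#(0)$ for small $t>0$. This construction provably never yields $g^\#\le g^\#(0)$.

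The paper's own proof breaks at the same place. It asserts that the ratio of the weights at $z_\nu$ and $z_\nu+\rho_\nu\zeta$ tends to $1$. That requires $\rho_\nu=o(1-|z_\nu|)$, and for $\alpha=0$ and normal $f$ this forces $g_\nu^\#(0)\to 0$, contradicting $g_\nu^\#(0)=c_1$.

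\paragraph{The extremal conclusion is false.}
So no other choice of base points will rescue it. Let $W=\frac{1+z}{1-z}$ and $f=e^{-W}$. Then $(1-|z|^2)f^\#(z)=\operatorname{Re}W/\cosh(\operatorname{Re}W)$. Thus $f$ is bounded (hence normal) but not strongly-normal, since this quantity equals $1/\cosh 1$ on the horocycle $\operatorname{Re}W=1$.

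Take any admissible sequence. Compare $\operatorname{Re}W$ with $\operatorname{Re}W(z_\nu)$ on $|z-z_\nu|<(1-|z_\nu|)/2$ using Harnack's inequality. This shows the limit is constant unless $\operatorname{Re}W(z_\nu)\to a\in(0,\infty)$ along a subsequence. In that case $|1-z_\nu|^2\asymp 1-|z_\nu|$, and the identity $W(z_\nu+\rho_\nu\zeta)-W(z_\nu)=\frac{2\rho_\nu\zeta}{(1-z_\nu)(1-z_\nu-\rho_\nu\zeta)}$ gives $g(\zeta)=\lambda e^{-(a+b\zeta)}$ with $|\lambda|=1$. If $b\neq 0$, then $g^\#=|b|/(2\cosh u)$, where $u=\operatorname{Re}(a+b\zeta)$ is non-constant, affine and positive on the disc. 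Hence $g^\#(\zeta)>g^\#(0)$ wherever $\operatorname{Re}(b\zeta)<0$.

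For $\alpha>0$ it is worse. Since $|g_\nu|\le\rho_\nu^\alpha\sup|f|\to 0$ for every bounded $f$, your Pang-type variant cannot give a non-constant limit at all.

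What survives in the normal case is either the Aulaskari--Wulan statement for $\alpha=0$ without $g^\#\le g^\#(0)$, or the weighted inequality above.
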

While the converse direction remains open in the meromorphic setting, further structure emerges when
restricted to holomorphic functions. This leads to the following analogue.
\begin{theorem} \label{T:FPZPSNHF}
 If a function $f$ holomorphic in $\mathbb D$ is not strongly-normal, then there exist a constant $R>0$; 
 a sequence $\{z_{\nu}\}\subset \mathbb D$ with $|z_{\nu}|\to1$; and a sequence $\{\rho_{\nu} \}\subset (0,\infty)$ where
 $\displaystyle\frac{\rho_{\nu} }{1-|z_{\nu}|}<\frac{1}{2R}$ such that \begin{equation*}g_{\nu}(\zeta)=\rho_{\nu} ^{\alpha}f(z_{\nu}+\rho_{\nu} \zeta)\end{equation*}
    converges locally uniformly to a non-constant holomorphic function $g(\zeta)$ in $|\zeta|<R$ 
    such that $g^\#(\zeta)\le g^\#(0)\neq 0$, where $\alpha\geq 0$.
\end{theorem}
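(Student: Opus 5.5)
We will prove Theorem~\ref{T:FPZPSNHF} by a Zalcman\,--\,Pang type renormalization near the boundary. We choose the parameter $\alpha$ in the natural range; the case $\alpha=0$ is the essential one and suffices for the existence claim. Since $f$ is not strongly-normal, there exist $\varepsilon_0>0$ and points $w_\nu\in\mathbb D$ with $|w_\nu|\to1$ and
\begin{equation*}
(1-|w_\nu|^2)f^\#(w_\nu)\ge \varepsilon_0 .
\end{equation*}
Fix $R>0$ small, to be specified in terms of $\varepsilon_0$, say $R\le \varepsilon_0/8$.

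On the closed disc $\overline{D}_\nu=\{|z-w_\nu|\le r_\nu\}$ with $r_\nu=(1-|w_\nu|)/2$, we consider the auxiliary function
\begin{equation*}
M_\nu(z)=\Bigl(1-\frac{|z-w_\nu|^2}{r_\nu^2}\Bigr)f^\#(z).
\end{equation*}
It is continuous on $\overline{D}_\nu$ and vanishes on $|z-w_\nu|=r_\nu$, so it attains its maximum at some $z_\nu$ with $|z_\nu-w_\nu|<r_\nu$. In particular $|z_\nu|\to1$ and $1-|z_\nu|\asymp 1-|w_\nu|$. Set
\begin{equation*}
\rho_\nu=\frac{1}{f^\#(z_\nu)}\cdot\frac{1}{M\!\!\!\phantom{|}}\quad\text{with a suitable normalization, e.g.}\quad \rho_\nu=\frac{\lambda}{f^\#(z_\nu)},
\end{equation*}
with $\lambda>0$ to be tuned. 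Then
\begin{equation*}
M_\nu(z_\nu)\ge M_\nu(w_\nu)=f^\#(w_\nu)\ge \frac{\varepsilon_0}{2r_\nu\cdot 2}.
\end{equation*}
This gives
\begin{equation*}
\rho_\nu\le \lambda\,\frac{1-|z_\nu|}{c\,\varepsilon_0}\cdot(\text{factor }d_\nu^2/r_\nu^2\le1),
\end{equation*}
so choosing $\lambda$ proportional to $\varepsilon_0$ times a small constant enforces $\rho_\nu/(1-|z_\nu|)<1/(2R)$. Writing $d_\nu=r_\nu-|z_\nu-w_\nu|$, one checks as in Zalcman's lemma that $\rho_\nu/d_\nu\to0$ or at least stays below $1/R$. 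This means the disc $|\zeta|<R$ is mapped by $z_\nu+\rho_\nu\zeta$ into $D_\nu$ with room to spare.

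Next, for $g_\nu(\zeta)=f(z_\nu+\rho_\nu\zeta)$ we have
\begin{equation*}
g_\nu^\#(\zeta)=\rho_\nu f^\#(z_\nu+\rho_\nu\zeta).
\end{equation*}
The maximality of $M_\nu$ at $z_\nu$, together with the elementary estimate
\begin{equation*}
r_\nu^2-|z-w_\nu|^2\ge d_\nu(d_\nu-\rho_\nu|\zeta|)
\end{equation*}
for $z=z_\nu+\rho_\nu\zeta$, yields
\begin{equation*}
g_\nu^\#(\zeta)\le \lambda\,\frac{d_\nu}{d_\nu-\rho_\nu|\zeta|}\cdot(1+o(1))
\end{equation*}
uniformly on compacta of $|\zeta|<R$, while $g_\nu^\#(0)=\lambda$. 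By Marty's theorem $\{g_\nu\}$ is normal in $|\zeta|<R$, so a subsequence converges locally uniformly (spherically) to $g$ with $g^\#(0)=\lambda\ne0$; hence $g$ is non-constant. Since each $g_\nu$ is holomorphic, Hurwitz's theorem shows that $g$ is holomorphic, because $g\not\equiv\infty$ as $g^\#(0)\neq 0$.

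\textbf{Main obstacle.} The step I expect to be hardest is the sharp inequality $g^\#(\zeta)\le g^\#(0)$ on all of $|\zeta|<R$. The bound above only gives $g^\#(\zeta)\le \lambda\cdot\lim d_\nu/(d_\nu-\rho_\nu|\zeta|)$. I would therefore aim to arrange $\rho_\nu/d_\nu\to0$ by choosing the maximization weight more cleverly: maximize $f^\#(z)\,(r_\nu-|z-w_\nu|)^{1/2}$-type weights, or iterate by letting $R$ be fixed but shrinking the scale via $\lambda_\nu\to$ a limit. This makes the factor tend to $1$. If that fails, I would instead pass the inequality through the supremum by taking $z_\nu$ as a genuine maximizer of $f^\#$ on a shrinking disc $|z-w_\nu|\le \rho_\nu R$, using non-strong-normality to guarantee non-degeneracy. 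For $\alpha>0$ the same scheme applies with $\rho_\nu^{\alpha}f$ as in Pang's lemma, using holomorphy to control $f$ from below of zero order.
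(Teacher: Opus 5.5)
The gap is exactly the step you flag as the main obstacle, and it cannot be repaired.

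Your Zalcman-type construction is fine as far as it goes. You get $f^\#(z_\nu)\ge\varepsilon_0/(8d_\nu)$, hence $\rho_\nu\le 8\lambda d_\nu/\varepsilon_0$ with $d_\nu\le r_\nu\le 1-|z_\nu|$. For $16\lambda R<\varepsilon_0$, Marty's criterion then gives a non-constant holomorphic limit with $g^\#(0)=\lambda$ and $\rho_\nu/(1-|z_\nu|)<1/(2R)$. That is an $\alpha=0$ conclusion of Aulaskari--Wulan type.

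But it only yields $g^\#(\zeta)\le\lambda K(\zeta)$ with $K(\zeta)>1$. The mechanism ``$\rho_\nu/d_\nu\to0$'' from Zalcman's lemma is available only for non-normal $f$. If $f$ is normal, $f^\#(z)\le C/(1-|z|^2)$ forces $\rho_\nu\ge\lambda(1-|z_\nu|)/C\ge\lambda d_\nu/C$. Indeed, for $\alpha=0$ every rescaling with $\rho_\nu=o(1-|z_\nu|)$ has a constant limit, so the distortion factor does not tend to $1$. For $\alpha>0$ you give no argument, and replacing the prescribed $\alpha\ge0$ by $\alpha=0$ changes the statement.

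The paper's proof breaks at the same point. It uses Pang's auxiliary function $F_\nu(t,z)$ with the annulus weights $(1-|z|/s_\nu)^{1/2}(1-s'_\nu/|z|)^{1/2}$, carried over from Theorem~\ref{T:FPZPSNMF}. It asserts that the ratio of these weights at $z_\nu$ and at $z_\nu+\rho_\nu\zeta$ tends to $1$, although it only knows $t_\nu\le c_1/c_0$, not $t_\nu\to0$. So the obstacle you identified is real and is not overcome there either.

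In fact the statement is false; the following example shows that no re-weighting or shrinking of discs can rescue the inequality. Let $\omega(z)=\frac{1+z}{1-z}$ and $S=e^{-\omega}$. Then $|S|=e^{-\Re\omega}<1$, $\Re\omega=\frac{1-|z|^2}{|1-z|^2}$, $S'=-\frac{2}{(1-z)^2}S$, and $(1-|z|^2)S^\#(z)=\Re\omega/\cosh(\Re\omega)$. This equals $1/\cosh 1$ on the horocycle $|z-\frac12|=\frac12$, which accumulates at $1$, so $S$ is not strongly-normal.

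For $\alpha>0$ we have $|g_\nu|<\rho_\nu^\alpha<\bigl(\frac{1-|z_\nu|}{2R}\bigr)^\alpha\to0$, so every admissible limit is constant.

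For $\alpha=0$ it suffices to identify the limit along a subsequence with $z_\nu\to e^{i\theta_0}$ and $a_\nu=\Re\omega(z_\nu)\to a\in[0,\infty]$. In three cases the limit is constant:
if $e^{i\theta_0}\ne1$, then $S$ is analytic at $e^{i\theta_0}$;
if $e^{i\theta_0}=1$ and $a=\infty$, then $\Re\omega\ge a_\nu/9$ on $|z-z_\nu|<(1-|z_\nu|)/2$;
if $a=0$, then $\omega$ oscillates by $O(a_\nu)$ on that disc.

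If $0<a<\infty$, then $(1-|z_\nu|)/|1-z_\nu|\le a_\nu|1-z_\nu|\to0$. Moreover $\omega(z_\nu+\rho_\nu\zeta)-\omega(z_\nu)=\frac{2\rho_\nu\zeta}{(1-z_\nu)(1-z_\nu-\rho_\nu\zeta)}$ with $|2\rho_\nu/(1-z_\nu)^2|\le a_\nu/R$. Hence $g(\zeta)=e^{-\omega_0-c\zeta}$ with $\Re\omega_0=a$.

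If $c\ne0$, then $|g|<1$, and $g^\#=|c|\,|g|/(1+|g|^2)$ is strictly increasing in $|g(\zeta)|=e^{-a-\Re(c\zeta)}$. So $g^\#(\zeta)>g^\#(0)$ wherever $\Re(c\zeta)<0$, i.e.\ at points arbitrarily close to $0$.

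Hence $S$ admits no rescaling of the required kind for any $\alpha\ge0$. What your argument (and the paper's) can actually deliver is the $\alpha=0$ statement without the inequality $g^\#(\zeta)\le g^\#(0)$.
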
   
Moreover, in this holomorphic framework, a partial converse can be obtained through Bloch type considerations.
\begin{theorem} \label{T:CPZPSNHF}
    If there exist a constant $R>0$; a sequence $\{z_{\nu}\}\subset \mathbb D$ with 
    $|z_{\nu}|\to1$; and a sequence $\{\rho_{\nu} \}\subset (0,\infty)$ where 
    $\displaystyle\frac{\rho_{\nu} }{1-|z_{\nu}|}<\frac{1}{2R}$ such that \begin{equation*}g_{\nu}(\zeta)=\rho_{\nu} ^{\alpha}f(z_{\nu}+\rho_{\nu} \zeta)\end{equation*}
    converges locally uniformly to a non-constant holomorphic function $g(\zeta)$ in $|\zeta|<R$ 
    such that $g^\#(\zeta)\le g^\#(0)\neq 0$, where $\alpha\geq 0$, then $f$ is not a little-Bloch function.
\end{theorem}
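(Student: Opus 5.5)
The plan is to argue directly from the rescaled sequence. We show that along the points $z_{\nu}$ themselves the Bloch quantity $(1-|z_{\nu}|^2)|f'(z_{\nu})|$ stays bounded away from $0$. Since $|z_{\nu}|\to 1$, this contradicts $(1-|z|^2)|f'(z)|\to 0$ as $z\to\partial\mathbb D$, so $f$ is not little-Bloch.

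\textbf{Step 1: derivatives converge.} The functions $g_{\nu}(\zeta)=\rho_{\nu}^{\alpha}f(z_{\nu}+\rho_{\nu}\zeta)$ are holomorphic on $|\zeta|<R$ for large $\nu$. Indeed, $\rho_{\nu}<(1-|z_{\nu}|)/(2R)$ gives $|z_{\nu}+\rho_{\nu}\zeta|<|z_{\nu}|+(1-|z_{\nu}|)/2<1$. They converge locally uniformly to the holomorphic function $g$. By Weierstrass' theorem, $g_{\nu}'\to g'$ locally uniformly on $|\zeta|<R$, and
\begin{equation*}
g_{\nu}'(\zeta)=\rho_{\nu}^{\alpha+1}f'(z_{\nu}+\rho_{\nu}\zeta).
\end{equation*}
Evaluating at $\zeta=0$ gives $\rho_{\nu}^{\alpha+1}|f'(z_{\nu})|\to|g'(0)|$.

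\textbf{Step 2: $g'(0)\neq 0$.} Since $g$ is holomorphic, $g(0)$ is finite, and $g^\#(0)=|g'(0)|/(1+|g(0)|^2)\neq 0$ forces $g'(0)\neq 0$.

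\textbf{Step 3: the lower bound.} From the hypothesis, $\rho_{\nu}<(1-|z_{\nu}|)/(2R)\to 0$, so $\rho_{\nu}<1$ for large $\nu$. Because $\alpha\ge 0$, this gives $\rho_{\nu}^{-\alpha}\ge 1$. Using $1-|z_{\nu}|^2\ge 1-|z_{\nu}|>2R\rho_{\nu}$, we get
\begin{equation*}
(1-|z_{\nu}|^2)|f'(z_{\nu})|> 2R\,\rho_{\nu}|f'(z_{\nu})| = 2R\,\rho_{\nu}^{-\alpha}\,\rho_{\nu}^{\alpha+1}|f'(z_{\nu})|\ge 2R\,\rho_{\nu}^{\alpha+1}|f'(z_{\nu})|.
\end{equation*}
Therefore
\begin{equation*}
\liminf_{\nu\to\infty}(1-|z_{\nu}|^2)|f'(z_{\nu})|\ge 2R|g'(0)|>0.
\end{equation*}
Since $|z_{\nu}|\to1$, $f$ is not little-Bloch.

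There is no serious obstacle. The only points that need care are that $\alpha\ge0$ is exactly what makes the factor $\rho_{\nu}^{-\alpha}$ harmless (it is at least $1$), and that holomorphy of $g$ is what turns $g^\#(0)\neq0$ into $g'(0)\neq0$. The normalization $g^\#(\zeta)\le g^\#(0)$ is not needed for this direction.
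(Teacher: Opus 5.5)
Your proof is correct and uses essentially the same mechanism as the paper: the scale bound $\rho_{\nu}<(1-|z_{\nu}|)/(2R)$ together with $\rho_{\nu}^{\alpha}\le 1$ turns the non-vanishing derivative of the limit into a positive lower bound for the Bloch quantity at points tending to $\partial\mathbb D$. The paper argues by contradiction at an arbitrary point $\zeta_0$ with $g^\#(\zeta_0)>0$ and works with spherical derivatives, whereas you argue directly at $\zeta=0$ with $g'$ and obtain the explicit bound $\liminf_{\nu}(1-|z_{\nu}|^2)|f'(z_{\nu})|\ge 2R|g'(0)|$.
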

Having established these connections in the holomorphic category, it is natural to revisit
harmonic mappings and investigate whether similar refined rescaling phenomena persist.
\begin{theorem} \label{T:FPZPSNHMF}
    If a harmonic mapping $f$ in $\mathbb D$ is not strongly-normal, then there exist a constant 
    $R>0$; a sequence $\{z_{\nu}\}\subset \mathbb D$ with $|z_{\nu}|\to1$; and a sequence $\{\rho_{\nu} \}\subset (0,\infty)$ where 
    $\displaystyle\frac{\rho_{\nu} }{1-|z_{\nu}|}<\frac{1}{2R}$ such that \begin{equation*}g_{\nu}(\zeta)=\rho_{\nu} ^{\alpha}f(z_{\nu}+\rho_{\nu} \zeta)\end{equation*}
    converges locally uniformly to a non-constant harmonic mapping $g(\zeta)$ in $|\zeta|<R$ such 
    that $g^\star(\zeta)\le g^\star(0)\neq 0$, where $\alpha\geq 0$.
\end{theorem}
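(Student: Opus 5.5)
We take $\alpha=0$ and follow the Aulaskari–Wulan rescaling scheme, with the harmonic derivative $f^\star$ playing the role of the spherical derivative.

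\emph{Step 1: a sequence on which $(1-|z|^2)f^\star$ stays large.} Write $f=h+\overline{g}$ on $\mathbb D$ with $h,g$ holomorphic. Since $f$ is not strongly-normal, there are $\varepsilon_0>0$ and points $w_\nu\in\mathbb D$ with $|w_\nu|\to1$ such that
\begin{equation*}
(1-|w_\nu|^2)f^\star(w_\nu)\ge\varepsilon_0 .
\end{equation*}

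\emph{Step 2: Zalcman–Pang maximization.} Fix $R>0$ small (to be chosen in terms of $\varepsilon_0$ only) and set $r_\nu=(1-|w_\nu|)/(4R)$. On the closed disc $\overline{D}(w_\nu,r_\nu)$ maximize
\begin{equation*}
M_\nu(z)=\bigl(r_\nu-|z-w_\nu|\bigr)f^\star(z).
\end{equation*}
Let $z_\nu$ be a maximum point, and define $\rho_\nu$ by $\rho_\nu f^\star(z_\nu)=1$, i.e.\ normalize so that the maximal value is comparable to $r_\nu$. Then:
\begin{itemize}
\item $|z_\nu|\to1$;
\item $\rho_\nu/(1-|z_\nu|)<1/(2R)$, after adjusting constants using $f^\star(z_\nu)\gtrsim \varepsilon_0/r_\nu$, which follows from $M_\nu(z_\nu)\ge M_\nu(w_\nu)$;
\item by maximality, for $|\zeta|<R$ the point $z_\nu+\rho_\nu\zeta$ lies in $D(w_\nu,r_\nu)$, and
\begin{equation*}
f^\star(z_\nu+\rho_\nu\zeta)\le \frac{r_\nu-|z_\nu-w_\nu|}{r_\nu-|z_\nu-w_\nu|-\rho_\nu|\zeta|}\,f^\star(z_\nu).
\end{equation*}
The prefactor tends to $1$ provided $\rho_\nu\big/\bigl(r_\nu-|z_\nu-w_\nu|\bigr)\to0$.
\end{itemize}
Ensuring this last ratio tends to $0$ requires $\sup M_\nu/\rho_\nu\to\infty$, and this is \textbf{the main obstacle}. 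Unlike in Zalcman's setting we do not have $(1-|z|^2)f^\star\to\infty$; we only have a positive lower bound. To handle this, I would choose the rescaling radius not as $1/f^\star(z_\nu)$ but as
\begin{equation*}
\rho_\nu=\frac{\delta_\nu}{f^\star(z_\nu)}\quad\text{with } \delta_\nu\to0 \text{ slowly}.
\end{equation*}
With this normalization, $g_\nu^\star(0)=\delta_\nu$, so $\delta_\nu\to0$ is incompatible with $g^\star(0)\neq0$. I would therefore instead keep $\rho_\nu f^\star(z_\nu)=c$ fixed and accept a limit bound $g^\star(\zeta)\le C\,g^\star(0)$ on $|\zeta|<R$. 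I would then recover the sharp inequality $g^\star\le g^\star(0)$ by re-maximizing $g^\star$ over a slightly smaller disc and recentring (a second Zalcman step inside the limit), shrinking $R$ if necessary.

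\emph{Step 3: extracting the limit.} The rescaled maps
\begin{equation*}
g_\nu(\zeta)=f(z_\nu+\rho_\nu\zeta)=h_\nu+\overline{g}_\nu
\end{equation*}
have uniformly bounded $g_\nu^\star$ on $|\zeta|<R$. By the harmonic Marty-type criterion used in Result \ref{R:LPPNHM}, the family $\{g_\nu\}$ is normal there. Normalize $g_\nu(0)$ by subtracting the constants $g_\nu(0)\to$ (a subsequence limit in the sphere); if $g_\nu(0)\to\infty$, pass to $1/g_\nu$ as in Bohra et al. Then a subsequence converges locally uniformly to a harmonic $g$ with $g^\star(0)=\lim g_\nu^\star(0)=c\neq0$, since the derivatives of $h_\nu,g_\nu$ converge as well. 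In particular $g$ is non-constant, and the bound $g^\star(\zeta)\le g^\star(0)$ passes to the limit from the bound in Step 2.
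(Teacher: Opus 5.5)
\textbf{There is a genuine gap, and it cannot be closed, because the statement is false.}

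Your Steps 1--2 are sound as far as they go, with one correction: take $r_\nu=\kappa(1-|w_\nu|)$ for a fixed $\kappa<1$, since with $r_\nu=(1-|w_\nu|)/(4R)$ and $R$ small the disc leaves $\mathbb D$. With $\rho_\nu=1/f^\star(z_\nu)$ they give a non-constant limit with $g^\star(0)=1$, but only the bound $g^\star(\zeta)\le M/(M-|\zeta|)$, where $M\in(R,\infty]$ is a subsequential limit of $\max M_\nu$. That is essentially the forward half of Theorem~\ref{T:LPSNHM}.

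The failing step is the upgrade to $g^\star\le g^\star(0)$ by a ``second Zalcman step inside the limit''. Re-maximizing $g^\star$ over a smaller closed disc gives a point that is maximal for that disc, not the centre of a disc on which $g^\star$ is maximal. The maximum may sit on the boundary, and $g^\star$ need not have any interior local maximum. Recentring inside a fixed limit, or shrinking $R$, cannot create one. Step 3 has a separate problem: $g^\star$ is not invariant under subtracting constants, and $1/g_\nu$ is not harmonic.

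\textbf{Counterexample.} Take $f(z)=\exp\bigl(-\tfrac{1+z}{1-z}\bigr)$. It is bounded and holomorphic, hence harmonic, with $f^\star=f^\#$. Write $w=\tfrac{1+z}{1-z}=u+iv$. Then $(1-|z|^2)f^\star(z)=u/\cosh u$, which equals $1/\cosh 1$ on the horocycle $u=1$, so $f$ is not strongly-normal.

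Given admissible $z_\nu,\rho_\nu,R$, put $\lambda_\nu=\rho_\nu/(1-|z_\nu|^2)$, so that $\limsup\lambda_\nu\le 1/(4R)$, and $u_\nu=\Re w(z_\nu)$. Then
\[
w(z_\nu+\rho_\nu\zeta)=w(z_\nu)+\frac{2\lambda_\nu u_\nu\,\zeta}{1-\rho_\nu\zeta/(1-z_\nu)}\cdot\frac{|1-z_\nu|^2}{(1-z_\nu)^2}.
\]
Along subsequences there are four cases.
\begin{enumerate}
\item If $u_\nu\to\infty$, the real part of the increment is at most about $\tfrac{t}{2-t}u_\nu$ on $|\zeta|\le tR$ with $t<1$, so $\Re w\to\infty$ and $f(z_\nu+\rho_\nu\zeta)\to0$.
\item If $u_\nu\to0$, the increment tends to $0$, so the limit is constant.
\item If $\lambda_\nu\to0$ with $u_\nu$ bounded, the increment again tends to $0$, so the limit is constant.
\item If $u_\nu\to u_0\in(0,\infty)$ and $\lambda_\nu\to\lambda>0$, then $z_\nu\to1$, $\rho_\nu/(1-z_\nu)\to0$ and $|1-z_\nu|^2/(1-z_\nu)^2\to-1$. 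The limit is $g(\zeta)=Ce^{a\zeta}$ with $|C|=e^{-u_0}$ and $a=2\lambda u_0$.
\end{enumerate}
In the last case $g^\star(\zeta)=a/\bigl(2\cosh(a\Re\zeta-u_0)\bigr)$. Since $aR\le u_0/2$, this is strictly increasing in $\Re\zeta$ on $|\zeta|<R$, so $g^\star(\zeta)>g^\star(0)$ whenever $0<\Re\zeta<R$. Hence no admissible choice of $z_\nu,\rho_\nu,R$ yields the sharp inequality.

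For $\alpha>0$ it is worse still: $f$ is bounded and $\rho_\nu\to0$, so $\rho_\nu^\alpha f(z_\nu+\rho_\nu\zeta)\to0$ and the limit is constant. The same example refutes Theorems~\ref{T:FPZPSNMF} and~\ref{T:FPZPSNHF}.

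\textbf{The paper's proof.} The paper's argument, via the proof of Theorem~\ref{T:FPZPSNMF}, hides exactly the obstacle you flagged.
\begin{enumerate}
\item The weight ratio in (\ref{SNMF.9}) tends to $1$ only if $t_\nu\to0$, which is the analogue of your $\max M_\nu\to\infty$. The proof only shows $t_\nu\le c_1/c_0$.
\item For $\alpha>0$, the bound $F_\nu(1,\tilde z_\nu)\ge c_0$ does not follow from (\ref{SNMF.2}), because there the weight carries the exponent $1+\alpha$.
\end{enumerate}
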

A corresponding Bloch type implication also holds in this setting.
\begin{theorem} \label{T:CPZPSNHMF}  
    If there exist a constant $R>0$; a sequence $\{z_{\nu}\}\subset \mathbb D$ with 
    $|z_{\nu}|\to1$; and a sequence $\{\rho_{\nu} \}\subset (0,\infty)$ where $\displaystyle\frac{\rho_{\nu} }{1-|z_{\nu}|}<\frac{1}{2R}$ such that \begin{equation*}g_{\nu}(\zeta)=\rho_{\nu} ^{\alpha}f(z_{\nu}+\rho_{\nu} \zeta)\end{equation*}
    converges locally uniformly to a non-constant harmonic mapping $g(\zeta)$ in $|\zeta|<R$ such 
    that $g^\star(\zeta)\le g^\star(0)\neq 0$, where $\alpha\geq 0$, then $f$ is not little-Bloch harmonic mapping.
\end{theorem}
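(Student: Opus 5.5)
We argue by contradiction, following the pattern the paper uses for Theorem \ref{T:CPZPSNHF} in the holomorphic case. Write $f=h+\overline{k}$, so that $f_z=h'$ and $f_{\bar z}=\overline{k'}$. The little-Bloch quantity is $(1-|z|^2)\bigl(|h'(z)|+|k'(z)|\bigr)$. Suppose $f$ is little-Bloch harmonic, so that
\[
(1-|z|^2)\bigl(|h'(z)|+|k'(z)|\bigr)\to 0 \quad\text{as } |z|\to 1 .
\]
The aim is to show that, under this hypothesis, the limit $g$ in Theorem \ref{T:CPZPSNHMF} has vanishing derivatives. This contradicts $g^\star(0)\neq 0$.

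\textbf{Step 1: the derivatives of the rescaled maps.} Since $g_\nu(\zeta)=\rho_\nu^{\alpha}h(z_\nu+\rho_\nu\zeta)+\overline{\rho_\nu^{\alpha}k(z_\nu+\rho_\nu\zeta)}$, we have
\[
|(g_\nu)_\zeta(\zeta)|+|(g_\nu)_{\bar\zeta}(\zeta)| = \rho_\nu^{1+\alpha}\bigl(|h'(w_\nu)|+|k'(w_\nu)|\bigr), \qquad w_\nu=z_\nu+\rho_\nu\zeta .
\]

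\textbf{Step 2: comparing scales.} Fix $|\zeta|\le r<R$. The constraint $\rho_\nu<(1-|z_\nu|)/(2R)$ gives
\[
1-|w_\nu|\ \ge\ 1-|z_\nu|-\rho_\nu r\ \ge\ \tfrac12(1-|z_\nu|).
\]
Hence $|w_\nu|\to1$ uniformly on $|\zeta|\le r$, and also $\rho_\nu\le 1-|w_\nu|$. Consequently
\[
\rho_\nu^{1+\alpha}\bigl(|h'(w_\nu)|+|k'(w_\nu)|\bigr)\le \rho_\nu^{\alpha}\,(1-|w_\nu|^2)\bigl(|h'(w_\nu)|+|k'(w_\nu)|\bigr).
\]
Because $\rho_\nu^\alpha\le 1$ (as $\rho_\nu<1$ and $\alpha\ge0$), the little-Bloch hypothesis forces the right-hand side to $0$ uniformly on $|\zeta|\le r$.

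\textbf{Step 3: passing to the limit.} Local uniform convergence of harmonic maps $g_\nu\to g$ implies local uniform convergence of their $\zeta$- and $\bar\zeta$-derivatives; this follows from the harmonic Weierstrass theorem, via the Poisson representation or by applying Cauchy estimates to the analytic parts. Therefore $|g_\zeta|+|g_{\bar\zeta}|\equiv0$ on $|\zeta|<R$. Since $g^\star(0)$ has numerator equal to $|g_\zeta(0)|+|g_{\bar\zeta}(0)|$, we get $g^\star(0)=0$, a contradiction. The same conclusion also contradicts the statement that $g$ is non-constant.

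\textbf{Main obstacle.} The delicate point is Step 3, namely justifying derivative convergence from convergence of $g_\nu$ alone, since the decomposition $g_\nu=H_\nu+\overline{K_\nu}$ is determined only up to constants. I would handle it by noting that $\partial_\zeta g_\nu=H_\nu'$ is holomorphic and equals $\partial_\zeta$ of the harmonic function $g_\nu$. Interior gradient estimates for harmonic functions then give $\partial_\zeta g_\nu\to\partial_\zeta g$ locally uniformly, and similarly for $\partial_{\bar\zeta}$. No normalization of the constants is needed. The only other bookkeeping is the case $\alpha>0$, which is covered by the bound $\rho_\nu^\alpha\le 1$ in Step 2.
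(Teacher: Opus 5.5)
Your proof is correct and is essentially the paper's own argument, namely the proof of Theorem~\ref{T:CPZPSNHF} carried over to harmonic maps: assume $f$ is little-Bloch, compare the scales $\rho_\nu$ and $1-|w_\nu|$ to dominate $\rho_\nu^{1+\alpha}(|h'(w_\nu)|+|k'(w_\nu)|)$ by a constant times $\rho_\nu^{\alpha}(1-|w_\nu|^2)(|h'(w_\nu)|+|k'(w_\nu)|)\to 0$, then pass to the limit (you bound the Euclidean derivative uniformly on compacta rather than $g_\nu^\star$ at a single point $\zeta_0$, which is only cosmetic). One small correction: $\rho_\nu\le 1-|w_\nu|$ fails when $R<1$ (e.g.\ $R=1/10$, $\rho_\nu=4(1-|z_\nu|)$, $\zeta=0$); the constraint actually gives $1-|w_\nu|\ge 1-|z_\nu|-\rho_\nu r>(2R-r)\rho_\nu>R\rho_\nu$, so you pick up a harmless factor $1/R$, exactly as in the paper's chain of inequalities.
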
  These observations naturally lead to an analogue of Result \ref{R:LPMF} for harmonic 
mappings, thus yielding a necessary and sufficient condition.
\begin{theorem} \label{T:LPSNHM}
A harmonic mapping $f$ in $\mathbb{D}$ is not strongly-normal
if and only if there exist a constant $R>0$; a 
sequence $\{z_{\nu}\}\subset \mathbb{D}$ with $|z_{\nu}|\to1$; and a sequence $\{\rho_{\nu} \}\subset(0,\infty)$ satisfying 
$\displaystyle\frac{\rho_{\nu} }{1-|z_{\nu}|}<\frac{1}{2R}$ such that $\{f(z_{\nu}+\rho_{\nu} \zeta)\}$ converges locally uniformly to a non-constant harmonic mapping in $|\zeta|<R$.
\end{theorem}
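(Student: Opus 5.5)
The forward direction is Theorem \ref{T:FPZPSNHMF} with $\alpha=0$. Then $g_\nu(\zeta)=f(z_\nu+\rho_\nu\zeta)$, and the theorem already gives a constant $R>0$, sequences $z_\nu$ with $|z_\nu|\to1$, and $\rho_\nu$ with $\rho_\nu/(1-|z_\nu|)<1/(2R)$ such that $g_\nu$ converges locally uniformly in $|\zeta|<R$ to a non-constant harmonic mapping. We should check that the proof of Theorem \ref{T:FPZPSNHMF} permits $\alpha=0$. If it does not, the same argument should be rerun directly. That argument picks points $w_\nu$ with $(1-|w_\nu|^2)f^\star(w_\nu)\ge\varepsilon_0>0$ and $|w_\nu|\to1$. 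It then maximizes a weighted quantity of the form $\bigl(r_\nu^2-|z-w_\nu|^2\bigr)f^\star(z)/r_\nu$ over a disc $|z-w_\nu|<r_\nu\asymp 1-|w_\nu|$. The scale is set by $\rho_\nu=1/f^\star(z_\nu)$, and $\rho_\nu/(1-|z_\nu|)$ is kept small by a fixed factor. Normalization gives $g_\nu^\star(0)=1$ and a uniform bound on $g_\nu^\star$ on compacts of $|\zeta|<R$. The normal family criterion for harmonic mappings with bounded $f^\star$ (the Arbeláez et al.\ analogue of Marty) then produces a limit $g$ with $g^\star(0)=1$, so $g$ is non-constant.

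For the converse, suppose $z_\nu,\rho_\nu,R$ are given and $f(z_\nu+\rho_\nu\zeta)\to g$ locally uniformly in $|\zeta|<R$ with $g$ non-constant harmonic. Write $g_\nu=h_\nu+\overline{G_\nu}$, where $h_\nu(\zeta)=h(z_\nu+\rho_\nu\zeta)$ and $G_\nu(\zeta)=g_f(z_\nu+\rho_\nu\zeta)$, with $f=h+\overline{g_f}$. Locally uniform convergence of harmonic functions gives locally uniform convergence of all derivatives, so $\partial_\zeta g_\nu\to\partial_\zeta g$ and $\overline{\partial}_\zeta g_\nu\to\overline\partial_\zeta g$. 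Hence $g_\nu^\star\to g^\star$ locally uniformly. Since $g$ is non-constant, there is $\zeta_0$ with $|\zeta_0|<R$ and $g^\star(\zeta_0)=c>0$. By the chain rule $g_\nu^\star(\zeta_0)=\rho_\nu f^\star(z_\nu+\rho_\nu\zeta_0)$. Setting $w_\nu=z_\nu+\rho_\nu\zeta_0$, we get $f^\star(w_\nu)\ge c/(2\rho_\nu)$ for large $\nu$.

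It remains to show that $(1-|w_\nu|^2)f^\star(w_\nu)$ stays bounded below. We have $1-|w_\nu|\ge 1-|z_\nu|-\rho_\nu R\ge (1-|z_\nu|)/2$, because $\rho_\nu R<(1-|z_\nu|)/2$. This also gives $|w_\nu|\to1$ and $w_\nu\in\mathbb D$. Then
\begin{equation*}
(1-|w_\nu|^2)f^\star(w_\nu)\ge (1-|w_\nu|)\frac{c}{2\rho_\nu}\ge \frac{1-|z_\nu|}{2}\cdot\frac{c}{2\rho_\nu}> \frac{cR}{2},
\end{equation*}
so the quantity does not tend to $0$ and $f$ is not strongly-normal.

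The converse is routine. The main obstacle is the forward direction: securing $\alpha=0$ together with the explicit constraint $\rho_\nu/(1-|z_\nu|)<1/(2R)$. This depends on choosing the auxiliary disc radius and $R$ compatibly, for instance $r_\nu=(1-|w_\nu|)/2$, with $R$ fixed in terms of $\varepsilon_0$, so that $\rho_\nu\le r_\nu/(\text{const}\cdot R)$.
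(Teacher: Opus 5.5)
Your converse is correct. It is the same estimate as in the Aulaskari--Wulan argument the paper invokes (and as in the proof of Theorem~\ref{T:CPZPSNHF}), run contrapositively.

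The gap is in the forward direction, at the step ``the normal family criterion \dots\ produces a limit $g$ with $g^\star(0)=1$, so $g$ is non-constant.'' A local bound on $g_\nu^\star$ only yields subsequences converging to a harmonic mapping \emph{or to $\infty$}. The passage $g_\nu^\star(0)\to g^\star(0)$, via convergence of derivatives, is available only when the limit is finite. In the meromorphic case $g\equiv\infty$ is excluded because $f^\#=(1/f)^\#$, but $f^\star$ has no such symmetry. For example, $g_\nu(\zeta)=i\nu+\nu^2(\zeta+\bar\zeta)$ has $g_\nu^\star\le 2$ and $g_\nu^\star(0)\to 2$, yet $|g_\nu|\ge\nu\to\infty$. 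Quoting Theorem~\ref{T:FPZPSNHMF} with $\alpha=0$ does not help, since its proof rests on exactly this step, as does the paper's Aulaskari--Wulan adaptation for the present theorem. By contrast, what you flagged as the main obstacle is harmless for your disc weight. Indeed $\rho_\nu=(r_\nu^2-|z_\nu-w_\nu|^2)/(r_\nu M_\nu)\le 2(r_\nu-|z_\nu-w_\nu|)/M_\nu$ with $M_\nu\ge\varepsilon_0/4$. So for $R\le\varepsilon_0/16$ the rescaled disc stays inside the auxiliary disc and $g_\nu^\star\le 4$.

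The gap cannot be closed, because the forward implication fails for $f=\Re(K^2)+i\,\Re K$ with $K(z)=(1+z)/(1-z)$. Write $f=h+\overline{g}$ with $h=(K^2+iK)/2$ and $g=(K^2-iK)/2$. Using $(1-|z|^2)|K'(z)|=2\Re K(z)$, one gets for $K=x+iy$
\[
(1-|z|^2)f^\star(z)=\frac{x\,\bigl(|2K+i|+|2K-i|\bigr)}{1+(x^2-y^2)^2+x^2}.
\]
This tends to $4\sqrt2$ along $y=x\to\infty$ (so $z\to1$), hence $f$ is not strongly normal.

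Now suppose some admissible rescaling converged to a non-constant harmonic $G$ in $|\zeta|<R$. Your own converse argument gives $p_\nu=z_\nu+\rho_\nu\zeta_0$ with $|p_\nu|\to1$, $(1-|p_\nu|^2)f^\star(p_\nu)>cR/2$ and $f(p_\nu)\to G(\zeta_0)$. Boundedness of $\Im f=x$ and $\Re f=x^2-y^2$ at $p_\nu$ forces $K(p_\nu)$ to stay bounded. Then $|1-p_\nu|=2/|K(p_\nu)+1|$ is bounded below, so $\Re K(p_\nu)=(1-|p_\nu|^2)/|1-p_\nu|^2\to0$. Hence $(1-|p_\nu|^2)f^\star(p_\nu)\le\Re K(p_\nu)\,(4|K(p_\nu)|+2)\to0$, a contradiction.

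Along the diagonal your construction does run, but the rescalings tend to $\infty$, exactly as in the model example. So the forward direction, and Theorem~\ref{T:FPZPSNHMF} at $\alpha=0$, need an additional hypothesis ruling out $f(z_\nu)\to\infty$. Only the converse you proved holds as stated.
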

We now turn our attention to logharmonic mappings, where a similar rescaling mechanism can be established.
\begin{theorem} \label{T:FPZPSNLHMF}
    If a logharmonic mapping $f$ in $\mathbb D$ is not strongly-normal, then there exist a constant $R>0$, a sequence $\{z_{\nu}\}\subset \mathbb D$
    with $|z_{\nu}|\to1$ and a sequence $\{\rho_{\nu} \}\subset (0,\infty)$ where $\displaystyle\frac{\rho_{\nu} }{1-|z_{\nu}|}<\frac{1}{2R}$ such that \begin{equation*}g_{\nu}(\zeta)=\rho_{\nu} ^{\alpha}f(z_{\nu}+\rho_{\nu} \zeta)\end{equation*}
    converges locally uniformly to a non-constant logharmonic mapping $g(\zeta)$ in $|\zeta|<R$ 
    such that $g^\diamondsuit(\zeta)\le g^\diamondsuit(0)\neq0$, where $\alpha\geq 0$.
\end{theorem}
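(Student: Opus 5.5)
Since $f$ is not strongly-normal, there exist $\varepsilon_0>0$ and points $w_\nu\in\mathbb D$ with $|w_\nu|\to1$ and $(1-|w_\nu|^2)f^\diamondsuit(w_\nu)\ge\varepsilon_0$. The plan is to run the Zalcman\,--\,Pang selection argument, in the form used for Theorems \ref{T:FPZPSNMF} and \ref{T:FPZPSNHMF}, on small discs centred at $w_\nu$, with the weight $\rho^\alpha$ built into the auxiliary quantity.

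\textbf{Choice of $R$ and selection of $z_\nu$.} Fix $R>0$ small relative to $\varepsilon_0$; a choice such as $R<\varepsilon_0/4$ should be compatible with the final estimate. Set $r_\nu=(1-|w_\nu|)/(2R)$ and, for each $\nu$, consider the closed disc $\overline{D}_\nu=\{|z-w_\nu|\le r_\nu\}$ (shrinking it harmlessly if it leaves $\mathbb D$). On $\overline{D}_\nu$ maximise the quantity
\[
M_\nu(z)=\bigl(r_\nu-|z-w_\nu|\bigr)^{1+\alpha}\,\frac{|f_z(z)|+|f_{\bar z}(z)|}{\bigl(r_\nu-|z-w_\nu|\bigr)^{2\alpha}+|f(z)|^2}.
\]
This is the standard Pang--Zalcman weight. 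For the logharmonic structure I would first pass to the representation $f=z|z|^{2\beta}h\overline g$. Away from the origin, which is irrelevant since $|z_\nu|\to1$, this is locally $f=e^{H+\overline G}$ with $H,G$ holomorphic. The derivatives $f_z=fH'$ and $f_{\bar z}=f\overline{G'}$ are then continuous, so $M_\nu$ is continuous on $\overline D_\nu$ and vanishes on the boundary circle. Hence $M_\nu$ attains its maximum at an interior point, which I take as $z_\nu$.

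\textbf{Rescaling.} Define $\rho_\nu$ by requiring $\rho_\nu^{1+\alpha}(\ldots)$ to normalise $g_\nu^\diamondsuit(0)$, namely
\[
\rho_\nu=\frac{r_\nu-|z_\nu-w_\nu|}{M_\nu(z_\nu)^{1/(1+\alpha)}}\cdot(\text{correction}).
\]
The lower bound $M_\nu(z_\nu)\ge M_\nu(w_\nu)$ together with the hypothesis $\ge\varepsilon_0$ then gives $\rho_\nu/(r_\nu-|z_\nu-w_\nu|)\to0$, or at least a bound below $1/(2R)$ relative to $1-|z_\nu|$, once $R$ is small. Standard maximality comparisons show that $g_\nu(\zeta)=\rho_\nu^\alpha f(z_\nu+\rho_\nu\zeta)$ satisfies $g_\nu^\diamondsuit(\zeta)\le 1+o(1)$ locally uniformly on $|\zeta|<R$, with $g_\nu^\diamondsuit(0)=1$. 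Each $g_\nu$ is again logharmonic: the equation \eqref{NLEPDE} is invariant under affine change of variable and under multiplication by the positive constant $\rho_\nu^\alpha$, with $a_{g_\nu}(\zeta)=a_f(z_\nu+\rho_\nu\zeta)$.

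\textbf{Compactness (main obstacle).} The uniform bound on $g_\nu^\diamondsuit$ gives spherical equicontinuity of $\{g_\nu\}$, so Arzel\`a--Ascoli yields a subsequence converging locally uniformly in the chordal metric to some $g$. The hard part is to show that $g$ is a genuine logharmonic mapping in $|\zeta|<R$ and not $\infty$, and that $g^\diamondsuit_\nu\to g^\diamondsuit$ so that the normalisations survive in the limit.

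For this I would use the representation $g_\nu=e^{H_\nu+\overline{G_\nu}}$ on small discs where $g_\nu\ne0,\infty$. The bound on $g^\diamondsuit_\nu$ bounds $|H_\nu'|+|G_\nu'|$ locally wherever $|g_\nu|$ is bounded above and below, so Montel's theorem applies to $H_\nu'$ and $G_\nu'$. The functions $a_{g_\nu}$ form a normal family bounded by $1$. Their limit is constant on $|\zeta|<R$, equal to $\lim a_f(z_\nu)$ after passing to a subsequence, with modulus at most $1$; I would exclude modulus $1$ by using the Jacobian-type relation $|f_{\bar z}|=|a_f||f_z|$ together with the non-degeneracy at $0$.

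Zeros of $g$ would be handled via the local form \eqref{VLHM}, and the value $\infty$ by Hurwitz-type arguments applied to $H_\nu$. Convergence of derivatives then gives $g^\diamondsuit(0)=1\ne0$ and $g^\diamondsuit\le1$, so $g$ is non-constant. Finally, $|z_\nu|\to1$ holds because $|z_\nu-w_\nu|<r_\nu$ and $r_\nu\to0$.
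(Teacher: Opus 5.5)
\textbf{Where the argument fails.} You claim that $M_\nu(z_\nu)\ge M_\nu(w_\nu)$, together with $(1-|w_\nu|^2)f^\diamondsuit(w_\nu)\ge\varepsilon_0$, forces $\rho_\nu/d_\nu\to0$, where $d_\nu=r_\nu-|z_\nu-w_\nu|$. This is false. The hypothesis only gives a fixed lower bound (for $\alpha=0$, $M_\nu(w_\nu)\ge\varepsilon_0/(4R)$), never divergence.

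The paper first disposes of non-normal $f$ via Lemma~\ref{L-nnlhm}, a split you do not make. In the remaining case, $f$ normal, the weighted maximum is actually bounded. For $R<1/2$ your disc always has to be shrunk into $\mathbb D$, and then the weight satisfies $d(z)\le 1-|z|$. So for $\alpha=0$ you get $M_\nu\le\sup_{\mathbb D}(1-|z|^2)f^\diamondsuit<\infty$, and taking $R$ small does not change this. Hence $\rho_\nu/d_\nu=1/M_\nu(z_\nu)$ stays bounded away from $0$. Maximality then yields only $g_\nu^\diamondsuit(\zeta)\le g_\nu^\diamondsuit(0)\,d_\nu/(d_\nu-\rho_\nu|\zeta|)$, and this factor does not tend to $1$. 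So $g^\diamondsuit(\zeta)\le g^\diamondsuit(0)$ does not follow.

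The same defect undermines your compactness paragraph. By Schwarz--Pick, $\lim a_{g_\nu}$ is constant only if $\rho_\nu/(1-|z_\nu|)\to0$. Moreover, $g^\diamondsuit(0)\ne0$ would not exclude $|a|\equiv1$ anyway, since it is compatible with $|g_{\bar\zeta}|=|g_\zeta|$.

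For $\alpha>0$ the ``(correction)'' hides a second gap. Your weight $d^{1+\alpha}/(d^{2\alpha}+|f|^2)$ is the Pang--Zalcman weight for $\rho^{-\alpha}f(z+\rho\zeta)$, not for $\rho^{\alpha}f(z+\rho\zeta)$. Also, $\rho^{1+\alpha}(|f_z|+|f_{\bar z}|)/(1+\rho^{2\alpha}|f|^2)$ is not homogeneous in $\rho$. So a point that maximises a quantity chosen before $\rho_\nu$ is fixed need not be a maximiser after rescaling. This is why the paper uses $F_\nu(t,z)$: it fixes $t_\nu$ by the intermediate value theorem and only then picks $z_\nu$ as a maximiser of $F_\nu(t_\nu,\cdot)$.

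\textbf{Why this cannot be patched.} The statement itself fails for some normal $f$. Let $f(z)=\exp\bigl(-\frac{1+z}{1-z}\bigr)$. It is holomorphic and zero-free, hence logharmonic with $a_f\equiv0$ and $f^\diamondsuit=f^\#$. With $u(z)=\frac{1-|z|^2}{|1-z|^2}$ one finds $(1-|z|^2)f^\diamondsuit(z)=u/\cosh u$. This equals $1/\cosh 1$ on the horocycle $|z-\frac12|=\frac12$, so $f$ is normal but not strongly normal.

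\emph{Case $\alpha>0$.} Since $|f|<1$ and $\rho_\nu<(1-|z_\nu|)/(2R)\to0$, the maps $\rho_\nu^{\alpha}f(z_\nu+\rho_\nu\zeta)$ tend to $0$, a constant.

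\emph{Case $\alpha=0$.} Apply Harnack's inequality to $u$ on $|z-z_\nu|<(1-|z_\nu|)/2$ and use $\frac{1+z}{1-z}=-1+\frac{2}{1-z}$. This shows that every non-constant limit allowed by the statement has the form $g(\zeta)=c\,e^{-\lambda\zeta}$ with $|c|=e^{-u_0}$, $u_0>0$, $\lambda\ne0$. Then $g^\diamondsuit(\zeta)=|\lambda|/\bigl(2\cosh(u_0+\operatorname{Re}\lambda\zeta)\bigr)$, which exceeds $g^\diamondsuit(0)$ at $\zeta=-s\bar\lambda/|\lambda|$ for small $s>0$.

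The paper's proof has the same hole. In the proof of Theorem~\ref{T:FPZPSNMF}, it asserts that the ratio of the weights at $z_\nu$ and at $z_\nu+\rho_\nu\zeta$ tends to $1$. That would require $t_\nu\to0$, whereas $t_\nu$ is only shown to stay below a constant less than $1$. So neither route proves the statement as written. The rescaling with $g^\diamondsuit\le g^\diamondsuit(0)$ is available only when the weighted maximum blows up, i.e.\ in the non-normal case.
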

An analogous Bloch type implication follows in this setting.
\begin{theorem} \label{T:CPZPSNLHMF}
    If there exist a constant $R>0$; a sequence $\{z_{\nu}\}\subset \mathbb D$ with 
    $|z_{\nu}|\to1$; and a sequence $\{\rho_{\nu} \}\subset (0,\infty)$ where 
    $\displaystyle\frac{\rho_{\nu} }{1-|z_{\nu}|}<\frac{1}{2R}$ such that \begin{equation*}g_{\nu}(\zeta)=\rho_{\nu} ^{\alpha}f(z_{\nu}+\rho_{\nu} \zeta)\end{equation*}
    converges locally uniformly to a non-constant logharmonic mapping $g(\zeta)$ in $|\zeta|<R$ 
    such that $g^\diamondsuit(\zeta)\le g^\diamondsuit(0)\neq0$, where $\alpha\geq 0$, then $f$ is not little-Bloch logharmonic mapping.
\end{theorem}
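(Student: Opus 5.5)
We argue by contradiction: assume $f$ is a little-Bloch logharmonic mapping, i.e. $(1-|z|^2)\bigl(|f_z(z)|+|f_{\overline z}(z)|\bigr)\to 0$ as $z\to\partial\mathbb D$. We then show that the limit $g$ would have to be constant, which contradicts $g^\diamondsuit(0)\neq 0$.

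The first step is to relate the derivatives of $g_\nu$ to those of $f$. Since $g_\nu(\zeta)=\rho_\nu^{\alpha}f(z_\nu+\rho_\nu\zeta)$, the chain rule gives
\begin{equation*}
|(g_\nu)_\zeta(\zeta)|+|(g_\nu)_{\overline\zeta}(\zeta)|=\rho_\nu^{1+\alpha}\bigl(|f_z|+|f_{\overline z}|\bigr)(z_\nu+\rho_\nu\zeta).
\end{equation*}
Now fix a compact set $K\subset\{|\zeta|<R\}$ and write $w_\nu=z_\nu+\rho_\nu\zeta$ with $\zeta\in K$. The condition $\rho_\nu<(1-|z_\nu|)/(2R)$ gives $\rho_\nu|\zeta|<(1-|z_\nu|)/2$. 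Hence $1-|w_\nu|\ge (1-|z_\nu|)/2$, so $|w_\nu|\to1$ uniformly on $K$, and moreover
\begin{equation*}
\rho_\nu\le \frac{1-|z_\nu|}{2R}\le \frac{1-|w_\nu|}{R}\le\frac{1-|w_\nu|^2}{R}.
\end{equation*}
Since $\alpha\ge0$ and $\rho_\nu\to0$ (because $\rho_\nu<(1-|z_\nu|)/(2R)$), we have $\rho_\nu^{\alpha}\le 1$ for large $\nu$. Combining these,
\begin{equation*}
|(g_\nu)_\zeta|+|(g_\nu)_{\overline\zeta}|\le \frac{1}{R}\,(1-|w_\nu|^2)\bigl(|f_z|+|f_{\overline z}|\bigr)(w_\nu)\to 0
\end{equation*}
uniformly on $K$, by the little-Bloch hypothesis.

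The second step, which is the main obstacle, is to pass this to the limit. We need the first derivatives of $g_\nu$ to converge to those of $g$. For harmonic maps this follows from Weierstrass-type theorems, but logharmonic maps solve a nonlinear equation. The plan is to use the convergence in a weak form instead. Locally uniform convergence $g_\nu\to g$ implies convergence of $g_\nu$ as distributions, hence $\partial g_\nu\to\partial g$ and $\overline\partial g_\nu\to\overline\partial g$ distributionally. The derivatives of $g_\nu$ tend to $0$ uniformly on compacts, so $g_\zeta=g_{\overline\zeta}=0$ in the sense of distributions on $|\zeta|<R$. Since $g$ is a smooth logharmonic mapping on the connected disc, $g$ is constant.

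Alternatively, one can argue locally away from zeros using the representation $g=h\overline{g_0}$ via (\ref{NVLHM}). Then $\log g_\nu$ is harmonic and converges uniformly, so its derivatives converge. In either approach $g$ is constant, and then $g^\diamondsuit(0)=0$, contradicting $g^\diamondsuit(0)\neq0$. Hence $f$ is not a little-Bloch logharmonic mapping.
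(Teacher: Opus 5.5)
Your proof is correct and takes essentially the paper's route, namely the argument for Theorem~\ref{T:CPZPSNHF} transferred to the logharmonic setting: argue by contradiction, use $\rho_\nu\le(1-|w_\nu|^2)/R$ to dominate the rescaled derivatives by the little-Bloch quantity $(1-|w_\nu|^2)\bigl(|f_z|+|f_{\overline z}|\bigr)(w_\nu)\to0$, and conclude that $g$ is constant. Your distributional passage to the limit is a worthwhile addition, because the paper's ``mutatis mutandis'' implicitly uses $g_\nu^\diamondsuit(\zeta_0)\to g^\diamondsuit(\zeta_0)$, which Weierstrass's theorem gives automatically only in the holomorphic case (and your smoothness remark about $g$ is unnecessary, since a continuous function with vanishing distributional gradient on a disc is constant).
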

Moreover, we obtain a characterization analogous to that of Lohwater and Pommerenke.
\begin{theorem} \label{T:LPSNLHM}
A logharmonic mapping $f$ in $\mathbb{D}$ is not strongly-normal
if and only if there exist a constant $R>0$; a sequence $\{z_{\nu}\}\subset \mathbb{D}$ with 
$|z_{\nu}|\to1$; and a sequence $\{\rho_{\nu} \}\subset(0,\infty)$ satisfying 
$\displaystyle\frac{\rho_{\nu} }{1-|z_{\nu}|}<\frac{1}{2R}$ such that $\{f(z_{\nu}+\rho_{\nu} \zeta)\}$ 
converges locally uniformly to a non-constant logharmonic mapping in $|\zeta|<R$.
\end{theorem}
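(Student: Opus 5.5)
The plan is to prove both directions by following the Aulaskari--Wulan argument behind Result \ref{R:LPMF}, with the spherical derivative replaced by $f^\diamondsuit$. The forward direction uses the case $\alpha=0$ of Theorem \ref{T:FPZPSNLHMF}. The converse is a direct computation of $(1-|z|^2)f^\diamondsuit$ along the rescaling sequence.

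For ``$\Rightarrow$'', suppose $f$ is not strongly-normal. There is then $\delta>0$ and a sequence $w_\nu\to\partial\mathbb D$ with $(1-|w_\nu|^2)f^\diamondsuit(w_\nu)\ge\delta$. I would re-run the argument of Theorem \ref{T:FPZPSNLHMF} with $\alpha=0$, which amounts to a Zalcman--Pang type maximisation. Fix $R>0$. For each $\nu$, maximise $(r_\nu-|z-w_\nu|)f^\diamondsuit(z)$ over a disc $|z-w_\nu|<r_\nu$ with $r_\nu$ comparable to $(1-|w_\nu|)/(4R)$. Let $z_\nu$ be the maximiser and put $\rho_\nu=1/f^\diamondsuit(z_\nu)$. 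These choices give $\rho_\nu/(1-|z_\nu|)<1/(2R)$ and $|z_\nu|\to1$. The rescalings $g_\nu(\zeta)=f(z_\nu+\rho_\nu\zeta)$ then satisfy $g_\nu^\diamondsuit(0)=1$ and $g_\nu^\diamondsuit\le C$ locally on $|\zeta|<R$. Note that $g_\nu$ is again logharmonic, with second coefficient $a_f(z_\nu+\rho_\nu\zeta)$.

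The \textbf{main obstacle} is extracting a logharmonic limit from $\{g_\nu\}$. A bound on $g_\nu^\diamondsuit$ gives spherical equicontinuity, so by Arzel\`a--Ascoli a subsequence converges locally uniformly in the spherical metric. What has to be checked is that the limit $g$ is logharmonic and non-constant.
\begin{itemize}
\item \emph{Logharmonic limit.} The coefficients $a_f(z_\nu+\rho_\nu\zeta)$ are bounded by $1$, so a further subsequence converges to an analytic $a$ with $|a|\le1$.
\item \emph{Using the representation.} On compact sets avoiding zeros and poles of the limit, write $g_\nu=h_\nu\overline{G_\nu}$ via the representation (\ref{NVLHM}), i.e.\ take local branches of $\log g_\nu=\log h_\nu+\overline{\log G_\nu}$. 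These log-parts are harmonic and locally bounded, so they converge in $C^1$. Hence $g$ satisfies (\ref{NLEPDE}) and $g_\nu^\diamondsuit\to g^\diamondsuit$.
\item \emph{Non-constancy.} From $g^\diamondsuit(0)=1$ the limit $g$ is non-constant. It is also a finite-valued map, since $g$ locally bounded follows from the equicontinuity in the spherical metric together with $g^\diamondsuit(0)\neq0$.
\end{itemize}
I would invoke this step exactly as it is done in Theorem \ref{T:FPZPSNLHMF} and not reprove it.

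For ``$\Leftarrow$'', assume $f(z_\nu+\rho_\nu\zeta)\to g$ locally uniformly on $|\zeta|<R$, with $g$ non-constant logharmonic. By the $C^1$ convergence argument above, $g^\diamondsuit_\nu\to g^\diamondsuit$ locally uniformly. Since $g$ is non-constant, I pick $\zeta_0$ with $|\zeta_0|<R/2$ and $g^\diamondsuit(\zeta_0)=c>0$. Put $w_\nu=z_\nu+\rho_\nu\zeta_0$. Then $|w_\nu|\to1$, and
\[
(1-|w_\nu|^2)f^\diamondsuit(w_\nu)=\frac{1-|w_\nu|^2}{\rho_\nu}\,g_\nu^\diamondsuit(\zeta_0).
\]
Using $\rho_\nu<(1-|z_\nu|)/(2R)$ we get $1-|w_\nu|\ge(1-|z_\nu|)-\rho_\nu R/2\ge\tfrac34(1-|z_\nu|)$. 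Therefore the ratio $(1-|w_\nu|)/\rho_\nu\ge\tfrac34\cdot 2R=\tfrac32R$, and so
\[
\liminf_{\nu\to\infty}(1-|w_\nu|^2)f^\diamondsuit(w_\nu)\ge\tfrac32 Rc>0.
\]
This contradicts strong normality, so $f$ is not strongly-normal, completing the equivalence.
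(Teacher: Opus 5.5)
Your converse ($\Leftarrow$) is correct. It is the Aulaskari--Wulan computation the paper appeals to, run directly rather than by contradiction. Also take $g(\zeta_0)\neq0$, so that the local $C^1$ convergence of $\log g_\nu$ applies at $\zeta_0$.

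The forward direction has a genuine gap, exactly at the step you call the main obstacle. A smaller point first: $R$ cannot be fixed in advance. With $r_\nu\approx(1-|w_\nu|)/(4R)$ your bounds only give $\rho_\nu\le r_\nu/M_\nu\le 2(1-|w_\nu|)/\delta$, which is independent of $R$, and for $R<1/4$ the disc leaves $\mathbb D$. Take $r_\nu=(1-|w_\nu|)/2$ and $R<\delta/8$ instead; this is the analogue of the paper's $1/(2R)=12\sqrt3/c$.

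The real problem is the limit. Spherical equicontinuity gives no local boundedness: the constant maps $g_\nu\equiv\nu$ are equicontinuous. And ``$g^\diamondsuit(0)\neq0$'' is circular, because $g_\nu^\diamondsuit(0)\to g^\diamondsuit(0)$ comes from $C^1$ convergence of $\log g_\nu$, which you only have where the limit is finite and nonzero. Unlike the meromorphic case, locally bounded $g_\nu^\diamondsuit$ with $g_\nu^\diamondsuit(0)=1$ does not prevent $g_\nu\to\infty$ or $g_\nu\to0$. Write $\log g_\nu=\phi_\nu+\overline{\psi_\nu}$ with $\psi_\nu'=a_\nu\phi_\nu'$. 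If $a_\nu\to-1$, then $|g_\nu|$ hardly varies while $\arg g_\nu$ turns fast. Likewise, a limit dilatation with $|a|\equiv1$, which your ``$|a|\le1$'' allows, makes even a finite non-constant limit fail to be logharmonic.

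This cannot be patched without an extra hypothesis. Let $f=h\overline{g}$ with $h(z)=e^{1/(1-z)}$ and $g(z)=(1-z)^{-1}e^{-1/(1-z)}$. Then $a_f(z)=-z$, $|f(z)|=1/|1-z|$, and
\[
f^\diamondsuit(z)=\frac{1+|z|}{|1-z|\,(1+|1-z|^2)},
\]
so $(1-r^2)f^\diamondsuit(r)\to4$ and $f$ is not strongly-normal. The map $z\,e^{1/(1-z)}\,\overline{(1-z)^{-1}e^{1-z-1/(1-z)}}$ has the form $z|z|^{2\beta}h\overline{g}$ with $\beta=0$, again has $a=-z$, and behaves the same way near $1$.

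Now take any admissible $R,z_\nu,\rho_\nu$. The points $z_\nu+\rho_\nu\zeta$ with $|\zeta|<R$ stay within $(1-|z_\nu|)/2$ of $z_\nu$. Along a subsequence $z_\nu\to e^{i\theta}$. If $\theta\neq0$, then $f(z_\nu+\rho_\nu\zeta)$ tends to the constant $f(e^{i\theta})$, since $f$ is continuous on $\overline{\mathbb D}\setminus\{1\}$. If $\theta=0$, it tends uniformly to $\infty$. Your own rescaling at $w_\nu=r_\nu\to1$ gives exactly this: $g_\nu^\diamondsuit(0)=1$ and $g_\nu^\diamondsuit$ is locally bounded, yet $g_\nu\to\infty$.

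So the forward implication fails as stated, and so does Theorem~\ref{T:FPZPSNLHMF} with $\alpha=0$, which you intended to invoke. The paper's ``straightforward adaptation'' of Aulaskari--Wulan passes over the same point.

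Under the extra hypothesis $\sup_{\mathbb D}|a_f|=k<1$ your route does close:
\begin{enumerate}
\item $\log|g_\nu|$ is harmonic on $|\zeta|<R$, since $g_\nu$ is zero-free there for the paper's class. Harnack's inequality then shows the spherical limit is either finite and zero-free, or $\equiv0$, or $\equiv\infty$.
\item In the last two cases, the gradient estimate $|1+a_\nu(0)|\,|\phi_\nu'(0)|\le(2/r)\,\bigl|\log|g_\nu(0)|\bigr|$ on $|\zeta|<r$, together with $|1+a_\nu|\ge1-k$, forces $g_\nu^\diamondsuit(0)\to0$. This contradicts $g_\nu^\diamondsuit(0)=1$.
\item Otherwise, $C^1$ convergence of $\log g_\nu$ gives $g^\diamondsuit(0)=1$ and a limit dilatation with $|a|\le k$, so the limit is a non-constant logharmonic mapping.
\end{enumerate}
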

Returning to the projective setting, where Pang’s 
rescaling lemma interacts naturally with homogeneous coordinates, we obtain the following extension, which may be viewed as a unified analogue of Results 
\ref{R:LPMF} and \ref{R:LPNHC}, noting that a Lohwater\,–\,Pommerenke type theorem for strongly-normal holomorphic curves in complex projective space 
has not previously appeared in the literature.
\begin{theorem} \label{T:LPSNHCPNC}
A holomorphic curve $f:\mathbb{D}\to \mathbb P^n(\mathbb C)$ is not strongly-normal if and only if
there exist a constant $R>0$, a sequence $\{z_{\nu}\}\subset \mathbb{D}$ with $|z_{\nu}|\to1$
and a sequence $\{\rho_{\nu} \}\subset (0,\infty)$ satisfying $\displaystyle\frac{\rho_{\nu} }{1-|z_{\nu}|}<\frac{1}{2R}$ such that 
${f(z_{\nu}+\rho_{\nu} \zeta)}$ converges locally uniformly in $|\zeta|<R$ to a non-constant holomorphic curve $g:\{\zeta\in\mathbb C\ |\ |\zeta|<R\}\to\mathbb P^n(\mathbb C)$.
\end{theorem}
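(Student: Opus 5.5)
We prove the two directions separately, following the scheme of Aulaskari and Wulan (Result \ref{R:LPMF}) with the spherical derivative replaced by the Fubini\,--\,Study derivative $\|f'\|_{FS}$. Throughout, $f$ is written locally in reduced homogeneous coordinates $F=(f_0,\dots,f_n)$, with no common zeros, so that
\[
\|f'\|_{FS}^2=\frac{\|F\|^2\|F'\|^2-|\langle F',F\rangle|^2}{\|F\|^4}.
\]
Two facts about this quantity will be used. First, it is invariant under multiplication of $F$ by nonvanishing holomorphic functions. Second, for the rescaled curve $g_\nu(\zeta)=f(z_\nu+\rho_\nu\zeta)$ one has $\|g_\nu'(\zeta)\|_{FS}=\rho_\nu\|f'(z_\nu+\rho_\nu\zeta)\|_{FS}$.

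\emph{Sufficiency.} Suppose $g_\nu\to g$ locally uniformly on $|\zeta|<R$, with $g$ non-constant. Locally uniform convergence of holomorphic curves into $\mathbb P^n(\mathbb C)$ implies convergence of reduced representations, after normalizing by nonvanishing factors. Hence $\|g_\nu'\|_{FS}\to\|g'\|_{FS}$ locally uniformly. Since $g$ is non-constant, there is $\zeta_0$ with $|\zeta_0|<R$ and $\|g'(\zeta_0)\|_{FS}>0$. Set $w_\nu=z_\nu+\rho_\nu\zeta_0$. The hypothesis gives $\rho_\nu|\zeta_0|<(1-|z_\nu|)/2$, so $1-|w_\nu|\ge \tfrac12(1-|z_\nu|)$ and $|w_\nu|\to1$. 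Moreover $1-|w_\nu|^2\ge 1-|w_\nu|\ge\tfrac12(1-|z_\nu|)>R\rho_\nu$. Therefore
\[
(1-|w_\nu|^2)\|f'(w_\nu)\|_{FS}\ge R\rho_\nu\|f'(w_\nu)\|_{FS}=R\|g_\nu'(\zeta_0)\|_{FS}\to R\|g'(\zeta_0)\|_{FS}>0,
\]
so $f$ is not strongly-normal.

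\emph{Necessity.} If $f$ is not strongly-normal, there are $\varepsilon>0$ and points $w_\nu$ with $|w_\nu|\to1$ and $(1-|w_\nu|^2)\|f'(w_\nu)\|_{FS}\ge\varepsilon$. Fix $R>0$, to be chosen small relative to $\varepsilon$ and to a Marty-type bound. We run a Zalcman--Pang maximization, but on the hyperbolic disc $D_\nu=\{z:|z-w_\nu|<(1-|w_\nu|)/2\}$ rather than the whole disc. Define
\[
M_\nu=\max_{z\in \overline{D_\nu}}\Big(\tfrac{1-|w_\nu|}{2}-|z-w_\nu|\Big)\|f'(z)\|_{FS}
\ge \tfrac{1-|w_\nu|}{2}\|f'(w_\nu)\|_{FS}\ge \varepsilon/4 ,
\]
attained at some $z_\nu$, and set $\rho_\nu=1/\|f'(z_\nu)\|_{FS}$. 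The standard argument then gives two things. First, $|z_\nu|\to1$ and $\rho_\nu/(1-|z_\nu|)\le C/\varepsilon$. Second, the rescaled curves $g_\nu$ satisfy $\|g_\nu'(0)\|_{FS}=1$ and $\|g_\nu'(\zeta)\|_{FS}\le 2$, say, on $|\zeta|<r_\nu$ with $r_\nu\gtrsim M_\nu$. Since $\varepsilon$ is only a fixed positive number, $M_\nu$ need not tend to infinity. This is exactly why the conclusion lives on a finite disc $|\zeta|<R$ with the constraint $\rho_\nu/(1-|z_\nu|)<1/(2R)$, rather than on $\mathbb C$. Choose $R$ small enough that both this constraint holds and the disc $|\zeta|<R$ lies inside the region where the bound is valid.

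On $|\zeta|<R$ the family $\{g_\nu\}$ then has uniformly bounded Fubini\,--\,Study derivative. By the Marty-type criterion for holomorphic curves (the mechanism behind Result \ref{R:LPNHC}), this family is normal. A subsequence therefore converges locally uniformly to a holomorphic curve $g$ with $\|g'(0)\|_{FS}=\lim\|g_\nu'(0)\|_{FS}=1$, so $g$ is non-constant.

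\emph{Main obstacle.} The delicate step is the necessity direction in $\mathbb P^n(\mathbb C)$. There we must ensure that bounded $\|\cdot\|_{FS}$-derivatives yield convergence as curves and that the limit is non-degenerate. The limit may lose common zeros of reduced representations, which is why Pang-type homogeneous scaling is subtle. We would handle this by invoking the projective Marty/Montel theorem implicit in Result \ref{R:LPNHC}. Equicontinuity in the Fubini\,--\,Study metric gives a subsequence converging in the compact-open topology. Derivative convergence then follows from local lifts: near each limit point, use an affine chart with $f_j\neq0$ and divide by $f_j$, which reduces the question to vector-valued holomorphic functions in a coordinate chart.
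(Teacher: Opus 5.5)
Your proof is correct, but in the necessity direction it follows a genuinely different route from the paper, which simply defers to an adaptation of Aulaskari--Wulan.

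\textbf{The paper's route.} That scheme, visible in the paper's proofs of Theorems \ref{T:FPZPSNMF} and \ref{T:LPSPNHM}, splits into two cases.
\begin{itemize}
\item If $f$ is not normal, it invokes Hahn's projective Lohwater--Pommerenke theorem (Result \ref{R:LPNHC}).
\item If $f$ is normal, it maximizes the two-sided weight $(1-|z|/s_\nu)^{1/2}(1-s_\nu'/|z|)^{1/2}\|f'(z)\|_{FS}$ over the annulus $s_\nu'\le|z|\le s_\nu$. It then uses $3(1-s_\nu)\ge s_\nu-s_\nu'$ to bound $\rho_\nu/(1-|z_\nu|)$.
\end{itemize}

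\textbf{Your route.} You run a single Zalcman--Pang maximization of $(\delta_\nu-|z-w_\nu|)\|f'(z)\|_{FS}$ over the disc $|z-w_\nu|\le\delta_\nu=(1-|w_\nu|)/2$. This needs no case distinction and no appeal to Result \ref{R:LPNHC}; only the projective Marty criterion is used. Note that this is therefore not really ``the scheme of Aulaskari and Wulan'' as you announce.

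The steps you left as ``the standard argument'' do check out. Write $d_\nu=\delta_\nu-|z_\nu-w_\nu|$. Then $\rho_\nu=d_\nu/M_\nu$ and $1-|z_\nu|\ge\delta_\nu+d_\nu\ge 2d_\nu$. Hence $\rho_\nu/(1-|z_\nu|)\le 1/(2M_\nu)\le 2/\varepsilon$. Also $\|g_\nu'\|_{FS}<2$ on $|\zeta|<M_\nu/2$, and $M_\nu/2\ge\varepsilon/8$. So any fixed $R<\varepsilon/8$ works.

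\textbf{What each approach buys.}
\begin{itemize}
\item Your one-sided distance weight makes the derivative bound on a fixed $\zeta$-disc immediate. With the two-sided annulus weight, such a bound is not automatic when $z_\nu$ is much closer to one boundary circle than to the other. In the normal case one can instead fall back on $\|f'(w)\|_{FS}\le C/(1-|w|^2)$, which is one reason the case split is natural there.
\item The paper's route, in exchange, inherits the stronger Lohwater--Pommerenke conclusion in the non-normal case: a non-constant limit on all of $\mathbb C$.
\end{itemize}

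Your sufficiency direction is the standard estimate $(1-|w_\nu|^2)\|f'(w_\nu)\|_{FS}\ge R\|g_\nu'(\zeta_0)\|_{FS}$ and is fine.
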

Finally, we turn to strongly $\varphi$-normal harmonic mappings and develop the Lohwater\,--
\,Pommerenke Theorem for strongly $\varphi$-normal harmonic mappings.
\begin{theorem} \label{T:LPSPNHM}
A harmonic mapping $f$ in $\mathbb D$ is not strongly $\varphi$-normal if and only if there exist a 
constant $R>0$; a sequence $\{z_{\nu}\}\subset\mathbb D$ with $|z_{\nu}|\to1$; and a sequence $\{\rho_{\nu} \}\subset(0,\infty)$ where 
$\displaystyle\frac{\rho_{\nu} }{1-|z_{\nu}|}<\frac{1}{2R}$ such that $f\left(z_{\nu}+\frac{\rho_{\nu} \zeta}{\varphi(|z_{\nu}|)}\right)$ converges locally 
uniformly to a non-constant harmonic mapping $g$ in $|\zeta|<R$.
\end{theorem}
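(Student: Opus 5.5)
The strategy is to follow Aulaskari--Wulan's proof of Result \ref{R:LPMF}, with $1/(1-|z|)$ replaced by $\varphi(|z|)$. The smooth-increase condition $R_a\to1$ lets us compare $\varphi(|z|)$ at nearby points. The relevant quantity is $M(z)=f^\star(z)/\varphi(|z|)$, where $f^\star=(|h'|+|g'|)/(1+|f|^2)$ for $f=h+\overline g$.

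\emph{Necessity.} Assume $f$ is not strongly $\varphi$-normal. Then there are $\varepsilon>0$ and points $w_\nu$ with $|w_\nu|\to1$ and $f^\star(w_\nu)\ge 2\varepsilon\,\varphi(|w_\nu|)$.
\begin{enumerate}
\item Fix $R>0$ so small that $R\le \varepsilon$ (to be adjusted). Set $z_\nu=w_\nu$ and $\rho_\nu=R\cdot\varepsilon^{-1}$-type constants chosen so that $\rho_\nu/\varphi(|z_\nu|)$ is the actual spatial scale. Because $\varphi(r)(1-r)\to\infty$, the ratio $\rho_\nu/\varphi(|z_\nu|)$ is $o(1-|z_\nu|)$. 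In particular $\rho_\nu/(1-|z_\nu|)<1/(2R)$ can be arranged, with $\rho_\nu$ bounded.
\item Put $F_\nu(\zeta)=f(z_\nu+\rho_\nu\zeta/\varphi(|z_\nu|))$ on $|\zeta|<R$. These maps are defined there because $\rho_\nu R/\varphi(|z_\nu|)<1-|z_\nu|$ for large $\nu$.
\item Compute $F_\nu^\star(\zeta)=\rho_\nu f^\star(w)/\varphi(|z_\nu|)$ with $w=z_\nu+\rho_\nu\zeta/\varphi(|z_\nu|)$. This gives $F_\nu^\star(0)\ge 2\varepsilon\rho_\nu>0$.
\end{enumerate}

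To get local uniform convergence on $|\zeta|<R$, we need uniform bounds on $F_\nu^\star$ on compacta. This is the main obstacle: a raw sequence need not satisfy such bounds. I would handle it the way the proof of Result \ref{R:LPPNHM} (Bohra et al.) does, namely by a Zalcman-type maximality selection. Choose $w_\nu$ to nearly maximize $(1-|w-z_\nu'|^2/r_\nu^2)M(w)$ over a disc of radius $r_\nu\asymp 1/\varphi(|z'_\nu|)$, where $z'_\nu$ is an initial bad point. Then use $R_a\to1$ to get $\varphi(|w|)/\varphi(|z_\nu|)\to1$ uniformly for $|\zeta|\le r<R$. Together these yield $F_\nu^\star(\zeta)\le C(r)$ on $|\zeta|\le r$ while $F_\nu^\star(0)$ stays bounded below.

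The normal-families criterion for harmonic mappings (the Marty-type theorem used for Theorem \ref{T:LPSNHM}) then gives a subsequence converging locally uniformly on $|\zeta|<R$ to a harmonic $g$ or to $\infty$. The lower bound on $F_\nu^\star(0)$, together with convergence of $h_\nu', g_\nu'$, shows $g^\star(0)>0$, so $g$ is non-constant. I would mirror the argument of Theorem \ref{T:LPSNHM} to exclude the limit $\infty$, taking the normalization $|f(w_\nu)|$ bounded if needed, or else passing to the chordal limit as done there.

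\emph{Sufficiency.} Suppose such $R,z_\nu,\rho_\nu$ and a non-constant limit $g$ exist. Pick $\zeta_0$ with $|\zeta_0|<R$ and $g^\star(\zeta_0)>0$; one exists because a non-constant harmonic map has $h'$ or $g'$ not identically zero. Set $w_\nu=z_\nu+\rho_\nu\zeta_0/\varphi(|z_\nu|)$. Locally uniform convergence of harmonic maps gives convergence of the analytic parts' derivatives, so
\[
\frac{\rho_\nu f^\star(w_\nu)}{\varphi(|z_\nu|)}\to g^\star(\zeta_0)>0 .
\]
Since $\rho_\nu<(1-|z_\nu|)/(2R)\to0$, it would seem to follow that $f^\star(w_\nu)/\varphi(|z_\nu|)\to\infty$. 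The condition $R_a\to1$ gives $\varphi(|w_\nu|)\sim\varphi(|z_\nu|)$, and $|w_\nu|\to1$, so $f^\star/\varphi$ does not tend to $0$ and $f$ is not strongly $\varphi$-normal.

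This sufficiency argument is only valid if $\rho_\nu$ is bounded away from $0$, or if the hypothesis is read with the bound itself as the needed estimate. I will check which reading the paper intends and, if necessary, use only $\liminf \rho_\nu>0$, which the construction in the necessity part actually provides.
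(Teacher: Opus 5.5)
\paragraph{Where the proof breaks.} The gap is in Step~1 of your necessity argument, and no choice of centres can repair it. Because $|z_\nu|\to1$, the constraint $\rho_\nu/(1-|z_\nu|)<1/(2R)$ forces $\rho_\nu\to0$. It therefore cannot be met by ``constants of type $R\varepsilon^{-1}$'': for a fixed $\rho>0$ the ratio $\rho/(1-|z_\nu|)$ tends to $\infty$. You have conflated it with a different, true fact, namely that $\rho_\nu/\varphi(|z_\nu|)=o(1-|z_\nu|)$ for bounded $\rho_\nu$.

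Once $\rho_\nu\to0$, your bound $F_\nu^\star(0)\ge2\varepsilon\rho_\nu$ gives nothing, and a Zalcman-type reselection does not help. The quantity $F_\nu^\star(0)=\rho_\nu f^\star(z_\nu)/\varphi(|z_\nu|)$ stays bounded below only if $f^\star(z_\nu)/\varphi(|z_\nu|)\to\infty$. Your sufficiency computation proves exactly this, and it is correct; it is essentially the paper's converse argument. Any admissible rescaling with a non-constant limit forces $f^\star(w_\nu)/\varphi(|w_\nu|)\to\infty$, so $f$ is not even $\varphi$-normal. Your closing hedge is backwards: that argument uses only $\rho_\nu\to0$ (boundedness of $\rho_\nu$ would suffice), never $\liminf\rho_\nu>0$.

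\paragraph{The necessity direction is false as stated.} It fails for every $\varphi$-normal map that is not strongly $\varphi$-normal. Concretely, let $\varphi(r)=(1-r)^{-2}$. This is smoothly increasing, since $1-|a+z(1-|a|)^2|=(1-|a|)\bigl(1+O(|z|(1-|a|))\bigr)$. Let $f(z)=\exp\bigl(i/(1-z)\bigr)$. Then
\[
f^\star(z)=\frac{1}{2\cosh\bigl(\mathrm{Re}\,\frac{i}{1-z}\bigr)\,|1-z|^{2}}\le\frac12\,\varphi(|z|),
\]
with equality for $z\in(0,1)$, so $f$ is not strongly $\varphi$-normal. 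Yet for any admissible $R,z_\nu,\rho_\nu$, with $w_\nu=z_\nu+\rho_\nu\zeta/\varphi(|z_\nu|)$, one has
\[
F_\nu^\star(\zeta)\le\tfrac12\,\rho_\nu\,\frac{\varphi(|w_\nu|)}{\varphi(|z_\nu|)}\to0
\]
locally uniformly, because the quotient $R_{z_\nu}(\rho_\nu\zeta)\to1$ and $\rho_\nu\to0$. Hence every limit is constant.

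\paragraph{The paper's own argument.} Its proof of necessity breaks at the corresponding place. After reducing to $\varphi$-normal $f$, it claims $M_\nu\ge c/4$. Without the factor $1-|z|$ present in Aulaskari--Wulan's hypothesis, one only gets $M_\nu\ge\frac c2(1-|\tilde z_\nu|)\to0$. It then asserts $f^\star(z_\nu)/\varphi(|z_\nu|)\to\infty$, which contradicts the $\varphi$-normality just assumed. Only the converse direction, your sufficiency part, survives. A valid necessity statement needs a different scale constraint, for instance $\rho_\nu$ bounded in place of $\rho_\nu/(1-|z_\nu|)<1/(2R)$.
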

We establish a corresponding characterization in the logharmonic setting.
\begin{theorem} \label{T:LPSPNLHM}
A logharmonic mapping $f$ in $\mathbb D$ is not strongly $\varphi$-normal if and only if there exist 
a constant $R>0$; a sequence $\{z_{\nu}\}\subset\mathbb D$ with $|z_{\nu}|\to1$; and 
a sequence $\{\rho_{\nu} \}\subset(0,\infty)$ where $\displaystyle\frac{\rho_{\nu} }{1-|z_{\nu}|}<\frac{1}{2R}$ such that $f\left(z_{\nu}+\frac{\rho_{\nu} \zeta}{\varphi(|z_{\nu}|)}\right)$ converges locally 
uniformly 
to a non-constant logharmonic mapping $g$ in $|\zeta|<R$.
\end{theorem}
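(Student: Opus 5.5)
We prove Theorem \ref{T:LPSPNLHM} in two directions, following the pattern of the Aulaskari--Wulan proof of Result \ref{R:LPMF} and the $\varphi$-normal rescaling of Result \ref{R:LPPNHM}. Throughout we use two facts. First, under $F_\nu(\zeta)=f(z_\nu+\rho_\nu\zeta/\varphi(|z_\nu|))$ the chain rule gives
\begin{equation*}
F_\nu^\diamondsuit(\zeta)=\frac{\rho_\nu}{\varphi(|z_\nu|)}\,f^\diamondsuit\!\left(z_\nu+\frac{\rho_\nu\zeta}{\varphi(|z_\nu|)}\right),
\end{equation*}
since $|f_z|+|f_{\bar z}|$ scales by the modulus of the linear dilation. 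Second, the smooth increase of $\varphi$ gives $R_{z_\nu}(\rho_\nu\zeta)\to1$ locally uniformly, provided $\rho_\nu$ stays bounded.

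\emph{Sufficiency.} Suppose $F_\nu\to g$ locally uniformly in $|\zeta|<R$ with $g$ non-constant logharmonic. Locally uniform convergence of logharmonic mappings also gives convergence of $f_z$ and $f_{\bar z}$; this follows from the representation (\ref{NVLHM}) or (\ref{VLHM}) together with compactness of the analytic factors $h,g,a_f$. Choose $\zeta_0$ with $g^\diamondsuit(\zeta_0)>0$ and put $w_\nu=z_\nu+\rho_\nu\zeta_0/\varphi(|z_\nu|)$. Because $\varphi(r)(1-r)\to\infty$, we get $|w_\nu|\to1$. The scaling identity then gives
\begin{equation*}
\frac{f^\diamondsuit(w_\nu)}{\varphi(|w_\nu|)}=\frac{F_\nu^\diamondsuit(\zeta_0)}{\rho_\nu\,R_{z_\nu}(\rho_\nu\zeta_0)}.
\end{equation*}
The denominator is bounded, since $\rho_\nu<(1-|z_\nu|)/(2R)\to0$ and $R_{z_\nu}\to1$. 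Hence this ratio tends to $\infty$ (in any case it stays bounded away from $0$), so $f$ is not strongly $\varphi$-normal.

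\emph{Necessity.} Suppose $f$ is not strongly $\varphi$-normal. Then there exist $\varepsilon>0$ and points $a_\nu$ with $|a_\nu|\to1$ and $f^\diamondsuit(a_\nu)\ge\varepsilon\varphi(|a_\nu|)$. Fix $R>0$ and run a Zalcman--Pang type maximization on the shrinking discs $|z-a_\nu|\le r_\nu/\varphi(|a_\nu|)$. Maximize the auxiliary quantity
\begin{equation*}
\Bigl(r_\nu-\varphi(|a_\nu|)|z-a_\nu|\Bigr)\frac{f^\diamondsuit(z)}{\varphi(|a_\nu|)}
\end{equation*}
and obtain a maximizer $z_\nu$. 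Then set $\rho_\nu$ so that the rescaled map satisfies $F_\nu^\diamondsuit(0)=1$ and $F_\nu^\diamondsuit\le 1+o(1)$ on $|\zeta|<R$. The parameters $r_\nu$ are chosen so that $\rho_\nu/\varphi(|z_\nu|)\ll 1-|z_\nu|$; since $\varphi(r)(1-r)\to\infty$, this is easily arranged, and it yields both $\rho_\nu/(1-|z_\nu|)<1/(2R)$ and $|z_\nu|\to1$.

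\emph{Main obstacle.} The uniform bound $F_\nu^\diamondsuit\le C$ only gives spherical equicontinuity. To extract a logharmonic limit we need a normal-family theorem for logharmonic mappings with bounded $\diamondsuit$-derivative, i.e.\ a Marty-type criterion in the logharmonic class in the spirit of Jiang \cite{jiang-nolm-2020}. We also need the limit $g$ to be logharmonic rather than the constant $\infty$. We would handle this by writing $F_\nu=\zeta$-factor$\cdot h_\nu\overline{g_\nu}$ and normalizing the analytic factors. The dilatations $a_{F_\nu}(\zeta)=a_f(z_\nu+\cdots)$ lie in the unit disc and so form a normal family; we pass to a subsequence and argue that the limit dilatation has modulus $<1$ or is a unimodular constant, with the latter excluded by the derivative normalization. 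Finally, the limit is non-constant because $g^\diamondsuit(0)=\lim F_\nu^\diamondsuit(0)=1$.
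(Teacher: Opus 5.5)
Your sufficiency half is fine (derivative convergence is clearest via harmonicity of $\log F_\nu$ near a point where $g\neq0$), but look at what it actually proves. Since $\rho_\nu<(1-|z_\nu|)/(2R)\to0$ and $R_{z_\nu}(\rho_\nu\zeta_0)\to1$, the denominator in your identity tends to $0$, so $f^\diamondsuit(w_\nu)/\varphi(|w_\nu|)\to\infty$ and $f$ is not even $\varphi$-normal. This is exactly why the necessity half cannot work.

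The failing step is your claim that choosing $r_\nu$ with $\rho_\nu/\varphi(|z_\nu|)\ll1-|z_\nu|$ yields $\rho_\nu/(1-|z_\nu|)<1/(2R)$. The first condition controls the Euclidean radius $\rho_\nu/\varphi(|z_\nu|)$. Since $\varphi(r)(1-r)\to\infty$, it only gives $\rho_\nu=o\bigl(\varphi(|z_\nu|)(1-|z_\nu|)\bigr)$, which even permits $\rho_\nu\to\infty$. With your normalization $F_\nu^\diamondsuit(0)=1$ you have $\rho_\nu=\varphi(|z_\nu|)/f^\diamondsuit(z_\nu)$. The required constraint is therefore equivalent to $f^\diamondsuit(z_\nu)/\varphi(|z_\nu|)>2R/(1-|z_\nu|)\to\infty$. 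The hypothesis only supplies $f^\diamondsuit(a_\nu)\ge\varepsilon\varphi(|a_\nu|)$. When $f$ is $\varphi$-normal, i.e.\ $f^\diamondsuit\le C\varphi$, no choice of maximizer can produce such points. The normal-family questions you single out as the main obstacle are secondary.

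In fact the ``only if'' direction is false as stated, so this gap cannot be closed. Take $\varphi(r)=(1-r)^{-2}$, which is smoothly increasing, and $f(z)=\exp\bigl(i/(1-z)\bigr)$, which is holomorphic and hence logharmonic with $a_f\equiv0$. Then $f^\diamondsuit(z)=\frac{|f(z)|}{1+|f(z)|^2}\,|1-z|^{-2}\le\frac12\varphi(|z|)$, with equality on $(0,1)$ where $|f|=1$. So $f$ is not strongly $\varphi$-normal. Now take any admissible $R,z_\nu,\rho_\nu$ and set $w=z_\nu+\rho_\nu\zeta(1-|z_\nu|)^2$. For $|\zeta|<R$ this point satisfies $1-|w|\ge\frac12(1-|z_\nu|)$. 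Hence $F_\nu^\diamondsuit(\zeta)\le\frac{\rho_\nu}{2}(1-|z_\nu|)^2(1-|w|)^{-2}\le2\rho_\nu\to0$. Because the $F_\nu$ are holomorphic, their derivatives converge, so every limit is constant.

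The same example defeats Theorem~\ref{T:LPSPNHM}, whose proof the paper transplants here. That proof breaks at the corresponding point: the bound $M_\nu\ge c/4$ in (\ref{LPSPNHM.4}) is copied from the Aulaskari--Wulan setting, where the hypothesis carries the factor $1-|\tilde z_\nu|^2$. From $f^\star(\tilde z_\nu)\ge c\,\varphi(|\tilde z_\nu|)$ one only gets $M_\nu\ge\frac c2(1-|\tilde z_\nu|)\to0$. The subsequent claim $f^\star(z_\nu)/\varphi(|z_\nu|)\to\infty$ then contradicts the standing assumption that $f$ is $\varphi$-normal.

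Your phrase $\rho_\nu/\varphi(|z_\nu|)\ll1-|z_\nu|$ points to the natural candidate for a correct statement: a smallness condition on $\rho_\nu/\bigl(\varphi(|z_\nu|)(1-|z_\nu|)\bigr)$, together with boundedness of $\rho_\nu$. Under it, in the $\varphi$-normal case one can simply rescale at $a_\nu$ with $\rho_\nu=1$.
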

\section{Preliminaries and Essential Lemmas}
In this section, we introduce the basic concepts and notation that will be used throughout the paper. For 
a comprehensive background, we refer the reader to \cite{chang-csolmalbtc-2026, aljuain-cobtsohm-2019, anderson-obfanf-1974, arbelaez-nhm-2019,aulaskari-avotlptfsnf-2001,bohra-lfptfpnhm-2025,chen-osnf-1996,colonna-banfatr-1989,colonna-tbcobhm-1989,eremenko-nhcfprtps-2007,eremenko-bcoh-2008,jiang-nolm-2020,pang-aeostfnf-2015,quang-wnaqfohc-2018}.\smallskip

The complex $n$-projective space is denoted by  $\mathbb{P}^n(\mathbb{C})$; fixing the homogeneous coordinates
$[z_0:z_1:\cdots:z_n]$ in $\mathbb P^n(\mathbb C)$ and assuming that $f : \mathbb{C} \to \mathbb{P}^n(\mathbb{C})$ be a holomorphic curve represented by 
$f(z) = [f_0(z) : f_1(z) : \dots : f_n(z)]$, we now go through the terms of the Fubini\,–\,Study distance, the Fubini\,–\,Study norm and the Fubini\,–
\,Study derivative \cite{eremenko-nhcfprtps-2007, quang-wnaqfohc-2018}.

\begin{enumerate}
    \item The \emph{Fubini\,--\,Study distance} between any two points in $\mathbb P^n(\mathbb C)$ is defined as
\begin{equation*}
d_{FS}(p,q) = \frac{\sum_{k,l=0}^n |p_k q_l - p_l q_k|}{\sqrt{\sum_{k=0}^n |p_k|^2} \, \sqrt{\sum_{l=0}^n |q_l|^2}},
\end{equation*}
for $p = [p_0 : p_1 : \dots : p_n]$ and $q = [q_0 : q_1 : \dots : q_n]$ in $\mathbb{P}^n(\mathbb{C})$. \smallskip
    \item The \emph{Fubini\,--\,Study norm} of $f$ at $z$ is defined as \begin{equation*}
\|f(z)\|_{FS} = \left( \sum_{k=0}^n |f_k(z)|^2 \right)^{1/2}.
\end{equation*}
    \item The \emph{Fubini\,--\,Study derivative} of $f$ at $z$ is defined as \begin{equation*}
\|f'(z)\|_{FS} = 
\frac{\left( \sum_{0 \le l_1 < l_2 \le n} 
\left| f_{l_1}'(z) f_{l_2}(z) - f_{l_1}(z) f_{l_2}'(z) \right|^2 \right)^{1/2}}
{\sum_{k=0}^n |f_k(z)|^2}.
\end{equation*}
\end{enumerate}

We provide a set of lemmas that will be used before moving on to the proofs of the key theorems. \smallskip

The lemma serves as a fundamental rescaling criterion for non-normality for a family of holomorphic 
functions. By following analogous arguments, the same methodology extends naturally to broader classes of functions. In particular, Hua’s \cite{hua-anatnc-1995} approach adapts to meromorphic function without 
essential modification; similar ideas have also been used in the harmonic setting by Bharti and Thin \cite{bharti-ronhapnhm-2025}.\smallskip
\begin{lemma}[{\cite[Lemma 6]{hua-anatnc-1995}}] \label{L-Hua} 
Given $\mathcal{F}$ be a family of holomorphic functions on $\mathbb D$. Then $\mathcal F$ is non-normal at $z=0$ if and only if given any 
$\alpha\in(-1,\infty)$, there is a sequence $\{f_{\nu}\}\subset\mathcal F$, a sequence $\{z_{\nu}\}\subset \mathbb D$ and a sequence of 
positive numbers $\rho_{\nu} \searrow 0$ such that $g_{\nu}(\zeta)=\rho_{\nu} ^{\alpha}f_{\nu}(z_{\nu}+\rho_{\nu} \zeta)$ converges locally 
uniformly with respect to the spherical metric to a non-constant entire function $g(\zeta)$.
\end{lemma}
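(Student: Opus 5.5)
We prove the two implications separately. Throughout, non-normality at $0$ is understood in the usual sense, so the points $z_{\nu}$ may be taken with $z_{\nu}\to 0$. The sufficiency direction is a contrapositive argument built on Hurwitz's theorem and a Harnack estimate. The necessity direction is a Zalcman\,--\,Pang type construction adapted to the weight $\rho^{\alpha}$.

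\emph{Sufficiency.} Assume $\mathcal F$ is normal at $0$; we show no non-constant limit $g$ can occur, whatever $\alpha\in(-1,\infty)$ is. Passing to a subsequence, $f_{\nu}\to F$ locally uniformly on a disc $|z|<\delta$, where either $F$ is holomorphic or $F\equiv\infty$. Since $z_{\nu}\to0$ and $\rho_{\nu}\to0$, the points $z_{\nu}+\rho_{\nu}\zeta$ stay in a compact subdisc for $\zeta$ in any fixed compact set.
\begin{itemize}
\item If $F$ is holomorphic, then $g_{\nu}'(\zeta)=\rho_{\nu}^{1+\alpha}f_{\nu}'(z_{\nu}+\rho_{\nu}\zeta)\to0$ locally uniformly, because $1+\alpha>0$. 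So a finite limit $g$ has $g'\equiv0$, and the only other possibility is $g\equiv\infty$. In both cases $g$ is constant.
\item If $F\equiv\infty$ and $\alpha\le0$, then $g_{\nu}\to\infty$ and $g$ is constant.
\item If $F\equiv\infty$ and $\alpha>0$, take $g$ non-constant entire. Near $0$ the $f_{\nu}$ are eventually zero-free, so by Hurwitz $g$ is zero-free. Write $f_{\nu}=e^{h_{\nu}}$ with $\operatorname{Re}h_{\nu}\to+\infty$. For large $\nu$, $\operatorname{Re}h_{\nu}$ is positive harmonic, and Harnack's inequality gives $|h_{\nu}'(w)|\le C\operatorname{Re}h_{\nu}(w)$ on a smaller disc. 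Since $g_{\nu}'/g_{\nu}(\zeta)=\rho_{\nu}h_{\nu}'(z_{\nu}+\rho_{\nu}\zeta)$ and $\operatorname{Re}h_{\nu}(z_{\nu}+\rho_{\nu}\zeta)=\log|g_{\nu}(\zeta)|+\alpha\log(1/\rho_{\nu})$, we obtain
\begin{equation*}
\left|\frac{g_{\nu}'}{g_{\nu}}(\zeta)\right|\le C\rho_{\nu}\Bigl(\log|g_{\nu}(\zeta)|+\alpha\log\tfrac1{\rho_{\nu}}\Bigr)\longrightarrow0
\end{equation*}
locally uniformly, since $\log|g_{\nu}|$ is locally bounded. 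Hence $g'/g\equiv0$ and $g$ is constant, a contradiction.
\end{itemize}

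\emph{Necessity.} Assume $\mathcal F$ is not normal at $0$ and fix $\alpha$. By Marty's criterion there are $f_{\nu}\in\mathcal F$ and $w_{\nu}\to0$ with $f_{\nu}^\#(w_{\nu})\to\infty$. Fix a small $r$ and restrict to $|z|\le r$. For each $\nu$ and $t>0$ consider
\begin{equation*}
M_{\nu}(t)=\max_{|z|\le r}\frac{(r-|z|)^{1+\alpha}\,t^{1+\alpha}\,|f_{\nu}'(z)|}{(r-|z|)^{2\alpha}+t^{2\alpha}|f_{\nu}(z)|^2}.
\end{equation*}
This is Pang's auxiliary quantity, which for $\alpha=0$ reduces to a weighted spherical derivative. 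It is continuous in $t$, tends to $0$ as $t\to0$, and, by the choice of $w_{\nu}$, exceeds $1$ for $t=1$ and large $\nu$. Let $t_{\nu}$ be the largest $t\le1$ with $M_{\nu}(t)=1$; then $t_{\nu}\to0$. Let $z_{\nu}$ be a maximiser for $M_{\nu}(t_{\nu})$, and put $\rho_{\nu}=t_{\nu}(r-|z_{\nu}|)$.

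With these choices, $g_{\nu}(\zeta)=\rho_{\nu}^{\alpha}f_{\nu}(z_{\nu}+\rho_{\nu}\zeta)$ satisfies
\begin{equation*}
\frac{|g_{\nu}'(\zeta)|}{1+|g_{\nu}(\zeta)|^2}\le\frac{1}{1-\varepsilon_{\nu}(\zeta)}\quad\text{on compacta, with }\varepsilon_{\nu}(\zeta)\to0,
\qquad \frac{|g_{\nu}'(0)|}{1+|g_{\nu}(0)|^2}=1 .
\end{equation*}
Marty's theorem then gives a spherically convergent subsequence on $\mathbb C$. The limit $g$ satisfies $g^\#(0)=1$, so it is non-constant. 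Being a spherical limit of holomorphic functions and not identically $\infty$, it is entire by Hurwitz applied to $1/g_{\nu}$.

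\emph{Main obstacle.} The delicate step is showing that $M_{\nu}(t)$ is continuous, finite and has the required monotonicity for every $\alpha>-1$, not only for $-1<\alpha<1$. For meromorphic families the denominator degenerates at poles when $\alpha\ge1$. For holomorphic $f_{\nu}$ the denominator is bounded on $|z|\le r$, and this is exactly where holomorphy must be used. I would first try Pang's original $t$-supremum argument verbatim, checking the estimate only at zeros of $f_{\nu}$, where the denominator reduces to $(r-|z|)^{2\alpha}$. If that fails for large $\alpha$, I would fall back to the argument of Hua cited above, replacing the spherical weight by $\bigl(1+|f|^{2}\bigr)^{(1+\alpha)/2}$-type normalisations.
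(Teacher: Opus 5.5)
The paper gives no proof of this lemma (it is quoted from Hua), so your argument has to stand on its own. Your sufficiency half does. The case split, the Hurwitz step and the Harnack bound $|h_\nu'|\le C\,\mathrm{Re}\,h_\nu$, which gives $|g_\nu'/g_\nu|\le C\rho_\nu\bigl(\log|g_\nu|+\alpha\log(1/\rho_\nu)\bigr)\to0$, are all correct. You were also right to impose $z_\nu\to0$: without it the ``if'' direction as printed is false, since $f_\nu(z)=\nu(z-\tfrac12)$ is normal at $0$, while $z_\nu=\tfrac12$, $\rho_\nu=\nu^{-1/(1+\alpha)}$ give $g_\nu(\zeta)=\zeta$ for every $\alpha$.

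The gap is in the necessity half, exactly in the range $\alpha\ge1$, which is the only part not already covered by Pang's lemma. Your $M_\nu(t)$ is not the right quantity. For $\rho=t(r-|z|)$ and $g(\zeta)=\rho^{\alpha}f_\nu(z+\rho\zeta)$ one has $g^\#(0)=\Phi_\nu(t,z)$, where
\begin{equation*}
\Phi_\nu(t,z)=\frac{t^{1+\alpha}(r-|z|)^{1+\alpha}\,|f_\nu'(z)|}{1+t^{2\alpha}(r-|z|)^{2\alpha}\,|f_\nu(z)|^{2}}
\end{equation*}
(the same form the paper uses for $F_\nu$ in the proof of Theorem~\ref{T:FPZPSNMF}). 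Your denominator $(r-|z|)^{2\alpha}+t^{2\alpha}|f_\nu|^2$ agrees with this only for $\alpha=0$. So $g_\nu^\#(0)=1$ does not follow, and the difficulty you describe at zeros of $f_\nu$ is an artefact of the wrong denominator. Beyond that, $t_\nu\to0$ and $g_\nu^\#\le(1-\varepsilon_\nu)^{-1}$ are asserted rather than derived, and the case $\alpha\ge1$ is explicitly left open. Also, with $r$ fixed you only get $|z_\nu|\le r$, not the $z_\nu\to0$ that your sufficiency half uses.

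The missing step is short. With the correct $\Phi_\nu$ the denominator is at least $1$, so $M_\nu(t)=\max_{|z|\le r}\Phi_\nu(t,z)\le t^{1+\alpha}r^{1+\alpha}\max_{|z|\le r}|f_\nu'|\to0$ as $t\to0$ for every $\alpha>-1$. This boundedness of $f_\nu'$ is where holomorphy enters: at a simple pole, $\Phi$ equals $s^{1-\alpha}|(1/f)'|$ with $s=t(r-|z|)$, which blows up as $t\to0$ when $\alpha>1$. Monotonicity in $t$ genuinely fails for $\alpha>1$, but you do not need it. For $\Phi(s)=s^{1+\alpha}A/(1+s^{2\alpha}C)$ one has
\begin{equation*}
\Phi(\lambda s)\ge\lambda^{1+|\alpha|}\Phi(s)\ \ (0<\lambda\le1),\qquad \Phi(\lambda s)\le\lambda^{1+\alpha}\max(1,\lambda^{-2\alpha})\Phi(s)\ \ (\lambda>0).
\end{equation*}
The first bound gives $t_\nu^{1+|\alpha|}\le1/M_\nu(1)\to0$ for any $t_\nu$ with $M_\nu(t_\nu)=1$; the intermediate value theorem suffices, and no ``largest'' $t_\nu$ is needed. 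For the second, take $\lambda=(r-|z_\nu|)/(r-|z_\nu+\rho_\nu\zeta|)\in[(1+t_\nu R)^{-1},(1-t_\nu R)^{-1}]$; it gives $g_\nu^\#(\zeta)\le\lambda^{1+\alpha}\max(1,\lambda^{-2\alpha})\to1$ uniformly on $|\zeta|\le R$. Finally, let $r=r_\nu=|w_\nu|+\bigl(f_\nu^\#(w_\nu)\bigr)^{-1/(2+2|\alpha|)}\to0$. Then still $M_\nu(1)\ge(r_\nu-|w_\nu|)^{1+|\alpha|}f_\nu^\#(w_\nu)\to\infty$, while $|z_\nu|\le r_\nu\to0$.
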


Motivated by this lemma, we formulate the corresponding version for a single meromorphic function.  
\begin{lemma} \label{L-nnmf}
If a meromorphic function $f$ on $\mathbb D$ is non-normal, then given any $\alpha\in[0,\infty)$, there is a sequence $\{z_{\nu}\}\subset \mathbb D$ and a 
sequence of positive numbers $\rho_{\nu} \searrow 0$ such that $g_{\nu}(\zeta)=\rho_{\nu} ^{\alpha}f_{\nu}(z_{\nu}+\rho_{\nu} \zeta)$ converges locally 
uniformly with respect to the spherical metric to a non-constant meromorphic function $g(\zeta)$.
\end{lemma}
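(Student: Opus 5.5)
We plan to run a Zalcman–Pang type selection directly on the single function $f$, with the weight $\rho^{\alpha}$ built into the quantity that is maximised. Because $f$ is one fixed function, there is no family whose non-normality must be transported, so we will not pass through Lemma~\ref{L-Hua}. For $0<r<1$, $|z|\le r$ and the fixed $\alpha\ge 0$, set
\begin{equation*}
\Phi_r(z)=\frac{(r-|z|)^{1+\alpha}\,|f'(z)|}{(r-|z|)^{2\alpha}+|f(z)|^2}.
\end{equation*}
This is exactly the spherical derivative at $0$ of $\zeta\mapsto (r-|z|)^{\alpha}f(z+(r-|z|)\zeta)$. The first step is to check that $\Phi_r$ extends continuously to $\{|z|\le r\}$. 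At a pole of order $m$, with $f\sim c\,t^{-m}$, we get $\Phi_r\sim |t|^{m-1}\cdot$(bounded), which is finite. Also $\Phi_r=0$ on $|z|=r$. Hence $M(r)=\max_{|z|\le r}\Phi_r(z)$ is attained at some $z_r$ with $|z_r|<r$.

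Next we use non-normality. By Noshiro's criterion there are points $a_\nu$ with $(1-|a_\nu|^2)f^\#(a_\nu)\to\infty$. Evaluating $\Phi_r$ at $a_\nu$ with $r\to1$ and comparing it with $f^\#$ should give radii $r_\nu\to 1$ with $M(r_\nu)\to\infty$. The comparison is harmless when $|f(a_\nu)|$ is bounded. If $|f(a_\nu)|\to\infty$, we pass to $1/f$, which has the same spherical derivative; we must then check that the weighted quantity still blows up. We expect a version of this step, in the spirit of Lohwater–Pommerenke (taking $\sup$ over $r$ rather than a single point), to give $\sup_r M(r)=\infty$.

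The selection step is as follows. Let $z_\nu=z_{r_\nu}$, $d_\nu=r_\nu-|z_\nu|$, and choose $\rho_\nu>0$ so that the function
\begin{equation*}
g_\nu(\zeta)=\rho_\nu^{\alpha}f(z_\nu+\rho_\nu\zeta)
\end{equation*}
satisfies $g_\nu^\#(0)=1$. We aim to show $\rho_\nu/d_\nu\to0$ from $M(r_\nu)\to\infty$; this comparison of $\rho_\nu$ and $d_\nu$ under the nonlinear normalisation is a standard Pang-type computation. Then $g_\nu$ is defined on $|\zeta|<d_\nu/\rho_\nu\to\infty$. For $|\zeta|\le K$ we have $r_\nu-|z_\nu+\rho_\nu\zeta|\ge d_\nu-\rho_\nu K$. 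Maximality of $z_\nu$ then yields
\begin{equation*}
g_\nu^\#(\zeta)\le \Bigl(\tfrac{d_\nu}{d_\nu-\rho_\nu K}\Bigr)^{1+\alpha}(1+o(1))\le 2,
\end{equation*}
up to the same kind of comparison of the denominators $(\cdot)^{2\alpha}+|f|^2$ at the two scales. By Marty's theorem, $\{g_\nu\}$ is spherically normal on every disc. A diagonal subsequence converges locally uniformly, spherically, to a meromorphic $g$ on $\mathbb C$, or to $\equiv\infty$. Since $g_\nu^\#(0)=1$ and $g_\nu^\#\to g^\#$ where $g$ is finite, the limit is non-constant; the case $g\equiv\infty$ is excluded by applying the same argument to $1/g_\nu$.

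The main obstacle is the bookkeeping in the last two steps: controlling the ratio of the denominators $\rho^{2\alpha}+|f|^2$ at the scale $\rho_\nu$ against $(d_\nu)^{2\alpha}+|f|^2$ when $\alpha\ge1$. In this regime the scaling factor can no longer be absorbed by the usual $\alpha<1$ estimate. We will first try to handle it by the normalisation $g_\nu^\#(0)=1$, which defines $\rho_\nu$ implicitly, together with monotonicity of $t\mapsto t^{1+\alpha}/(t^{2\alpha}+|w|^2)$ for small $t$. At poles we will switch to $1/f$ locally.
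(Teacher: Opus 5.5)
\textbf{There is a genuine gap, and it cannot be filled: for $\alpha>0$ the statement is false.} The missing step is the one you leave as ``we expect'': that non-normality forces $\sup_r M(r)=\infty$. It fails even for holomorphic $f$. Only $\alpha=0$ survives; that case is Lohwater--Pommerenke, and there your scheme is the standard Zalcman argument.

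Here is a counterexample. Let $B$ be the Blaschke product with zeros $a_k=1-2^{-k}$, and put $f(z)=B(z)\log\frac{1}{1-z}$.
\begin{itemize}
\item For $|j-k|=m$ one has $\bigl|\frac{a_k-a_j}{1-a_ja_k}\bigr|\ge\frac{1-2^{-m}}{1+2^{-m}}$, so $(1-a_k^2)|B'(a_k)|\ge\delta>0$.
\item Since $f(a_k)=0$, this gives $(1-a_k^2)f^\#(a_k)=(1-a_k^2)|B'(a_k)|\,k\log 2\to\infty$. Hence $f$ is not normal.
\item On the other hand $|f(z)|\le\log\frac{1}{1-|z|}+3$.
\item If $\rho_\nu^\alpha f(z_\nu+\rho_\nu\zeta)$ converges locally uniformly on $\mathbb C$, it is defined on $|\zeta|<2$ for large $\nu$. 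Then $1-|z_\nu+\rho_\nu\zeta|\ge\rho_\nu$ for $|\zeta|\le1$.
\item Consequently $|\rho_\nu^\alpha f(z_\nu+\rho_\nu\zeta)|\le\rho_\nu^\alpha\bigl(\log\frac{1}{\rho_\nu}+3\bigr)\to0$ uniformly there, so every limit is $\equiv0$.
\end{itemize}

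In your scheme the failure appears exactly at the blow-up step. The quantity belonging to the weight $\rho^{\alpha}$ is $\frac{d^{1+\alpha}|f'(z)|}{1+d^{2\alpha}|f(z)|^2}\le(1-|z|)^{1+\alpha}|f'(z)|$. By Cauchy's estimate this is $O\bigl((1-|z|)^\alpha\log\frac{2}{1-|z|}\bigr)$ for this $f$, so $M(r)$ stays bounded.

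The underlying reason is that non-normality is M\"obius invariant while the weight is not. At points $a_\nu$ with $(1-|a_\nu|)f^\#(a_\nu)\to\infty$ you necessarily have $d\to0$. When $|f(a_\nu)|$ stays bounded, the weighted quantity is roughly $d^{\alpha}\cdot d\,f^\#(a_\nu)$. So the bounded case is the harmful one, not the harmless one. Passing to $1/f$ does not help, since $\rho^{\alpha}f=1/(\rho^{-\alpha}\cdot(1/f))$ reverses the sign of the exponent.

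The same objection applies to the paper's own justification, which is only an appeal by analogy to the family results of Hua, Bharti--Thin and Ahamed--Mandal. In those results non-normality sits at an interior point, where the weight is bounded below.

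\textbf{Two further errors.} First, your $\Phi_r$ is not the spherical derivative at $0$ of $\zeta\mapsto d^{\alpha}f(z+d\zeta)$; that derivative is $\frac{d^{1+\alpha}|f'|}{1+d^{2\alpha}|f|^2}$. Dividing numerator and denominator of your expression by $d^{2\alpha}$ shows that it belongs to $d^{-\alpha}f(z+d\zeta)$, which is Pang's normalisation with the opposite sign. For the example above your $\Phi_r$ does blow up: at $z=a_k$ with $r$ near $1$ it is at least a constant times $2^{k\alpha}k$. So completing your plan could only yield limits of $\rho_\nu^{-\alpha}f(z_\nu+\rho_\nu\zeta)$, which is a different assertion.

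Second, even with the correct quantity, maximising at the single scale $d$ and normalising $\rho_\nu$ only at $z_\nu$ does not give $g_\nu^\#\le2$ on compacts when $\alpha\ne0$.
\begin{itemize}
\item For $\rho<d$ the ratio of the normalised quantities is $(\rho/d)^{1+\alpha}\frac{1+d^{2\alpha}|f|^2}{1+\rho^{2\alpha}|f|^2}$. This lies anywhere between $(\rho/d)^{1+\alpha}$ and $(\rho/d)^{1-\alpha}$, depending on $|f|$ at the point.
\item So maximality at scale $d$ only bounds a neighbour's value at scale $\rho$ by about $(d/\rho)^{2\alpha}\to\infty$.
\item Pang's proof avoids this by using $\sup_z F(t,z)$ together with an intermediate-value choice of $t$, as the paper does in its proof of Theorem~\ref{T:FPZPSNMF}.
\item That device needs $\sup_z F(t,z)\to0$ as $t\to0$. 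Near a simple pole the supremum is of order $t^{1-\alpha}$, so this fails for $\alpha\ge1$. The difficulty you flag for $\alpha\ge1$ is therefore not bookkeeping either.
\end{itemize}
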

\begin{proof}
The proof of this lemma follows along the same lines as that of Bharti\,--\,Thin 
\cite[Theorem 1.2]{bharti-ronhapnhm-2025} and Ahamad\,--\,Mandal \cite[Theorem 2.2]{ahamed-ncflm-2025}.
\end{proof}
We now turn to the harmonic setting, where an analogous rescaling characterization of non-normality has been established.
\begin{lemma}[{\cite[Theorem 1.2]{bharti-ronhapnhm-2025}}]\label{L-nnhm} 
A non-constant mapping $f$ harmonic in $\mathbb D$ is non-normal if and only if there exist sequence of
points $\{z_{\nu}\}\subset\mathbb D$ and of real numbers $\{\rho_{\nu} \}\subset(0,\infty)$ with 
$\rho_{\nu} \searrow 0$ as $\nu\to\infty$ such that the functions $g(\zeta)=\rho_{\nu} ^{\alpha}f(z_{\nu}+\rho_{\nu} \zeta)$ converges 
locally uniformly to a non-constant harmonic mapping $g(\zeta)$ where $g^{\star}(\zeta)\leq g^\star(0)=1$, where $\alpha\in(-1,\infty)$. 
\end{lemma}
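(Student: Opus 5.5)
We prove Lemma~\ref{L-nnhm} by adapting the Zalcman--Pang rescaling argument (as in Lemma~\ref{L-Hua}) to a single harmonic mapping, using the harmonic spherical derivative $f^\star$ in place of $f^\#$. The equivalence between normality of $f$ and $\sup_{z\in\mathbb D}(1-|z|^2)f^\star(z)<\infty$ (the Arbel\'aez et al.\ characterization) is taken as the working definition.

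For the forward direction, suppose $f$ is non-normal and fix $\alpha\in(-1,\infty)$.

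\emph{Step 1: choose the extremal points.} Pick $w_\nu\in\mathbb D$ with $(1-|w_\nu|^2)f^\star(w_\nu)\to\infty$. Set $r_\nu<1$ close enough to $1$ that $|w_\nu|<r_\nu$ and $(r_\nu^2-|w_\nu|^2)f^\star(w_\nu)$ still tends to infinity. Following Pang's modification, consider on $|z|\le r_\nu$ the weighted quantity
\[
M_\nu(z)=\Bigl(1-\frac{|z|^2}{r_\nu^2}\Bigr)^{1+\alpha}\,\sup\bigl\{\rho^{1+\alpha}\ldots\bigr\}.
\]
Concretely, define $\rho$ implicitly by the $\alpha$-weighted equation $\rho^{1+\alpha}(|h'|+|g'|)(z_\nu)/(1+\rho^{2\alpha}|f(z_\nu)|^2)=1$. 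The point $z_\nu$ is chosen to maximize the resulting weighted expression, which vanishes on $|z|=r_\nu$. Maximality gives the scale $\rho_\nu$ at $z_\nu$, the bound $\rho_\nu/(r_\nu-|z_\nu|)\to0$, and hence $\rho_\nu\searrow0$.

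\emph{Step 2: local bounds.} Set $g_\nu(\zeta)=\rho_\nu^{\alpha}f(z_\nu+\rho_\nu\zeta)=\rho_\nu^\alpha h(\cdot)+\overline{\rho_\nu^\alpha g(\cdot)}$. Maximality yields $g_\nu^\star(\zeta)\le 1+o(1)$ uniformly on compact sets, with $g_\nu^\star(0)=1$. The factor $\rho^{1+\alpha}$ in front of $|h'|+|g'|$ is exactly the derivative scaling $\rho\cdot\rho^\alpha$, which is why the weight is chosen this way.

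\emph{Step 3: extract a limit.} This step is the main obstacle. From $g_\nu^\star\le 2$ we know the family $\{g_\nu\}$ is spherically equicontinuous, and Arzel\`a--Ascoli gives a subsequence converging spherically to some $G$. We must then show that $G$ is finite-valued and harmonic, that the convergence is locally uniform in the Euclidean sense, and that $h_\nu'\to H'$ and $g_\nu'\to K'$ so that $g^\star_\nu\to G^\star$. I would proceed as follows:
\begin{itemize}
\item[(a)] Near a point where $|G|<\infty$, the sequence $g_\nu$ is locally bounded. Then $|h_\nu'|+|g_\nu'|\le 2(1+|g_\nu|^2)$ is locally bounded, so $h_\nu'$ and $g_\nu'$ form normal families of holomorphic functions. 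After normalizing the additive constants, $h_\nu$ and $g_\nu$ converge, and $G$ is harmonic there.
\item[(b)] To rule out $G=\infty$ on an open set, use the standard harmonic argument of Bharti--Thin. If $|g_\nu|\to\infty$ near a point $\zeta_0$, then $(|h_\nu'|+|g_\nu'|)/(1+|g_\nu|^2)$ being bounded forces the Euclidean oscillation of $1/g_\nu$ to degenerate. Combined with $g_\nu^\star(0)=1$ and a connectedness argument, this shows the set where $G$ is finite is both open and closed, hence everything.
\end{itemize}
The limit then satisfies $G^\star(\zeta)\le G^\star(0)=1$, so $G$ is non-constant.

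For the converse, suppose such rescalings exist but $f$ is normal, so that $(1-|z|^2)f^\star(z)\le C$. The case $\alpha=0$ is Lohwater--Pommerenke and is immediate. For general $\alpha$, compute
\[
g_\nu^\star(\zeta)=\frac{\rho_\nu^{1+\alpha}(|h'|+|g'|)}{1+\rho_\nu^{2\alpha}|f|^2}.
\]
Splitting into the cases $|f|$ large or small, compare this with $\rho_\nu f^\star\le C\rho_\nu/(1-|z_\nu|)$, which tends to $0$ provided $z_\nu$ stays inside the disc relative to $\rho_\nu$. This forces $G^\star\equiv0$, a contradiction.
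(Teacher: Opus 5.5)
The paper gives no proof of this lemma. It quotes it from Bharti--Thin and then explicitly restricts itself to the forward implication, and only for $\alpha\ge 0$, because the converse of the analogous logharmonic statement is not correctly established. Your proposal tries to prove the full equivalence for all $\alpha\in(-1,\infty)$. Both directions fail beyond what the paper uses, and not merely for technical reasons.

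\textbf{The converse.} Your comparison with $\rho_\nu f^\star$ is where it breaks. At $z=z_\nu+\rho_\nu\zeta$,
\[
g_\nu^\star(\zeta)=\rho_\nu^{\alpha}\,\frac{1+|f(z)|^2}{1+\rho_\nu^{2\alpha}|f(z)|^2}\,\rho_\nu f^\star(z).
\]
Wherever $g_\nu$ stays bounded and bounded away from $0$ (that is, $|f(z)|\asymp\rho_\nu^{-\alpha}$), the prefactor is of order $\rho_\nu^{-|\alpha|}\to\infty$. So normality only gives $g_\nu^\star\lesssim\rho_\nu^{1-|\alpha|}/(1-|z_\nu|)$, which need not tend to $0$.

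In fact the ``if'' part is false for every $\alpha\neq0$. Let $S(z)=\frac{1+z}{1-z}$ and $f=e^{S}$, and write $w=S(z)=u+iv$.
\begin{itemize}
\item \emph{$f$ is normal.} One checks $(1-|z|^2)|S'(z)|=2u$, so $(1-|z|^2)f^\#(z)=2ue^{u}/(1+e^{2u})\le 2/e$.
\item \emph{Choice of sequences (for $\alpha>0$).} Take $v_\nu\in2\pi\mathbb Z$ with $v_\nu\to\infty$, put $\rho_\nu=4/|1+w_\nu|^2$, and let $u_\nu$ solve $\rho_\nu^{\alpha}e^{u_\nu}=1$, so $u_\nu\sim2\alpha\log v_\nu$.
\item \emph{The rescalings exhaust $\mathbb C$.} We have $\rho_\nu/(1-|z_\nu|^2)=1/u_\nu\to0$.
\item \emph{The limit.} Since $S$ is M\"obius, $S(z_\nu+\rho_\nu\zeta)-S(z_\nu)=\frac{2\rho_\nu\zeta}{(1-z_\nu)^2}\bigl(1-\frac{\rho_\nu\zeta}{1-z_\nu}\bigr)^{-1}\to-2\zeta$. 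Hence $\rho_\nu^{\alpha}f(z_\nu+\rho_\nu\zeta)\to e^{-2\zeta}$ locally uniformly on $\mathbb C$.
\end{itemize}
The limit is non-constant and $(e^{-2\zeta})^\#=1/\cosh(2\Re\zeta)\le1$, with equality at $0$. For $\alpha\in(-1,0)$, use $1/f=e^{-S}$ in the same way. Only $\alpha=0$ survives, and there your argument is correct.

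\textbf{The forward direction.} Even granting Steps 1--2, Step 3(b) rests on the belief that $g_\nu^\star(0)=1$ together with local bounds on $g_\nu^\star$ prevents $g_\nu\to\infty$. For meromorphic maps this works because $g^\#=(1/g)^\#$ and $1/g_\nu$ is again meromorphic. For harmonic maps, $1/g_\nu$ is not harmonic and nothing ties $g_\nu^\star$ to it. For example, $g_\nu(\zeta)=N+i(1+N^2)\Im\zeta$ (that is, $h_\nu=N+\tfrac{1+N^2}{2}\zeta$ and $k_\nu=-\tfrac{1+N^2}{2}\zeta$) satisfies $g_\nu^\star(\zeta)=1/\bigl(1+(1+N^2)(\Im\zeta)^2\bigr)\le g_\nu^\star(0)=1$ on all of $\mathbb C$, yet $|g_\nu|\ge N\to\infty$.

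This cannot be repaired. With normality defined by $\sup(1-|z|^2)f^\star<\infty$, as you assume, the forward implication itself fails for $\alpha\in(-1,0]$. Take $f(z)=(1-z)^{-1/2}+(1-z)^{-2}-\overline{(1-z)^{-2}}$.
\begin{itemize}
\item \emph{$f$ is not normal.} On $(0,1)$, $f(x)=(1-x)^{-1/2}$ and $(1-x^2)f^\star(x)\sim8/(1-x)$.
\item \emph{$f$ stays large.} $\Re f=\Re(1-z)^{-1/2}\ge|1-z|^{-1/2}/\sqrt2>1/2$ on $\mathbb D$, and $f$ is real-analytic near $\overline{\mathbb D}\setminus\{1\}$.
\item \emph{No non-constant limit.} For $\alpha<0$, every $\rho_\nu^{\alpha}f(z_\nu+\rho_\nu\zeta)$ tends to $\infty$. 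For $\alpha=0$, it tends to $\infty$ if $z_\nu\to1$ and to a constant otherwise.
\end{itemize}
This $f$ is nonetheless normal in the family sense: $\{f\circ\phi\}$ has subsequences converging to harmonic maps or to $\infty$. So taking the sup condition as equivalent to normality is itself not harmless for harmonic maps. Step 3 cannot be completed as stated; a correct statement must allow the limit $\infty$, or change the definition or normalization.
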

The above result naturally extends to the logharmonic framework, where a similar rescaling principle characterizes non-normality.
\begin{lemma}[{\cite[Theorem 2.2]{ahamed-ncflm-2025}}] \label{L-nnlhm} 
A non-constant mapping $f$ logharmonic in $\mathbb D$ is non-normal if and only if there exist sequence of points $\{z_{\nu}\}\subset\mathbb D$ and of real 
numbers $\{\rho_{\nu} \}\subset(0,\infty)$ with $\rho_{\nu} \searrow 0$ as $\nu\to\infty$ such that the functions $g(\zeta)=\rho_{\nu} ^{\alpha}f(z_{\nu}+\rho_{\nu} \zeta)$ converges 
locally uniformly to a non-constant logharmonic mapping $g(\zeta)$ whre $g^{\diamondsuit}(\zeta)\leq g^\diamondsuit(0)=1$, where $\alpha\in(-1,\infty)$. 
\end{lemma}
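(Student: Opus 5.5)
The plan is to follow the Zalcman\,--\,Pang rescaling scheme used by Bharti\,--\,Thin for Lemma \ref{L-nnhm}, and to check at each stage that the logharmonic structure (\ref{NLEPDE}) survives. The key object is, for $\rho>0$ and $z\in\mathbb D$,
\begin{equation*}
\Phi(\rho,z)=\frac{\rho^{1+\alpha}\bigl(|f_z(z)|+|f_{\bar z}(z)|\bigr)}{1+\rho^{2\alpha}|f(z)|^2}.
\end{equation*}
This is exactly $g^\diamondsuit(0)$ for $g(\zeta)=\rho^{\alpha}f(z+\rho\zeta)$. It is continuous in $\rho$, and since $\alpha>-1$ it tends to $0$ as $\rho\to0$ wherever $f$ is finite.

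For the forward direction, non-normality gives $\sup_{\mathbb D}(1-|z|^2)f^\diamondsuit(z)=\infty$. I would pick $r_\nu\nearrow1$ and points where $(r_\nu-|z|)f^\diamondsuit$ is nearly maximal on $|z|<r_\nu$. For each $\nu$, I would then use continuity in $\rho$ and compactness to choose $z_\nu$ and $\rho_\nu$ with
\begin{equation*}
\max_{|z|\le r_\nu}\Bigl(1-\frac{|z-z_\nu|}{r_\nu-|z_\nu|}\Bigr)^{1+\alpha}\Phi(\rho_\nu,z)=1,
\end{equation*}
attained at $z_\nu$. The unboundedness of the weighted derivative then forces $\rho_\nu/(r_\nu-|z_\nu|)\to0$, hence $\rho_\nu\searrow0$.

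Set $g_\nu(\zeta)=\rho_\nu^\alpha f(z_\nu+\rho_\nu\zeta)$. The normalisation gives $g_\nu^\diamondsuit(0)=1$ and $g_\nu^\diamondsuit(\zeta)\le1+o(1)$ on every fixed disc. So $\{g_\nu\}$ is spherically equicontinuous, and Arzel\`a\,--\,Ascoli produces a subsequence converging locally uniformly, in the spherical metric, to some $g$ with $g^\diamondsuit\le g^\diamondsuit(0)=1$. Since $g^\diamondsuit(0)=1$, the limit $g$ is non-constant.

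The main obstacle is showing that $g$ is genuinely logharmonic. Each $g_\nu$ satisfies (\ref{NLEPDE}) with second dilatation $a_\nu(\zeta)=a_f(z_\nu+\rho_\nu\zeta)$, and these are analytic with $|a_\nu|<1$. By Montel's theorem a subsequence of $a_\nu$ converges to an analytic $a$ with $|a|\le1$. I must rule out two things.
\begin{enumerate}
\item $|a|\equiv1$. By the maximum principle this can only happen if $a$ is a unimodular constant, and I expect this to be the delicate point. I would try to exclude it using uniform ellipticity on compact sets: $a_\nu(\zeta)$ equals $a_f$ evaluated near $z_\nu$, so I would work with $a_f(z_\nu)$ directly. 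If that fails, I would pass to the representations (\ref{NVLHM})/(\ref{VLHM}) and run the rescaling on $\log f=\log h+\overline{\log g}$ (plus the $\beta\log|z|$ term), which is harmonic away from zeros, reducing to the harmonic case of Lemma \ref{L-nnhm}.
\item $g\equiv\infty$ on an open set, or $g$ taking the value $\infty$. Here I would argue through local uniform convergence of $\log g_\nu$ on zero-free discs, where $g_\nu$ is a product of a holomorphic and an antiholomorphic factor.
\end{enumerate}
Granting these, passing to the limit in $\overline{(g_\nu)_{\bar z}}\,g_\nu=a_\nu\,(g_\nu)_z\,\overline{g_\nu}$ shows that $g$ satisfies (\ref{NLEPDE}) with dilatation $a$.

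For the converse, assume $f$ is normal, so $(1-|w|^2)f^\diamondsuit(w)\le C$, and that a rescaling as in the statement exists. Following the Lohwater\,--\,Pommerenke argument, I would use $\rho_\nu\to0$ and argue separately according to whether $z_\nu$ stays in a compact set or $|z_\nu|\to1$.
\begin{enumerate}
\item If $z_\nu$ stays in a compact set, then $f$ is smooth there while $\rho_\nu^{1+\alpha}\to0$. Hence $g_\nu^\diamondsuit\to0$ wherever the limit is finite, which forces $g$ to be constant, a contradiction.
\item If $|z_\nu|\to1$, the normality bound gives $g_\nu^\diamondsuit(\zeta)\lesssim C\rho_\nu^{1+\alpha}/(1-|z_\nu+\rho_\nu\zeta|)\cdot(\text{factor})$. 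For $\alpha=0$ this tends to $0$ once $\rho_\nu=o(1-|z_\nu|)$, which one can arrange by shrinking the discs on which $\zeta$ is taken. I would check that this again yields $g^\diamondsuit\equiv0$.
\end{enumerate}
For general $\alpha$ in the converse, I would simply invoke the $\alpha=0$ mechanism through the Pang\,--\,Zalcman equivalence, as Bharti\,--\,Thin do.
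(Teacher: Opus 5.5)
\paragraph{The converse direction.} The paper does not prove this lemma. It quotes it from Ahamed--Mandal and then warns that the converse is not correctly established there, so it uses only the forward implication, and only for $\alpha\ge 0$. Your converse fails at exactly that point.

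The factor you suppress in case 2 is $(1+|f(w)|^2)/(1+\rho_\nu^{2\alpha}|f(w)|^2)$ with $w=z_\nu+\rho_\nu\zeta$.
\begin{itemize}
\item For $\alpha>0$ this factor is of size $\rho_\nu^{-2\alpha}$ wherever $|f(w)|\asymp\rho_\nu^{-\alpha}$.
\item For $\alpha<0$ the factor is harmless, but the prefactor $\rho_\nu^{1+\alpha}$ is already too large.
\end{itemize}
So normality only gives $g_\nu^\diamondsuit\lesssim\rho_\nu^{1-|\alpha|}/(1-|w|)$. This need not tend to $0$, even though $\rho_\nu=o(1-|z_\nu|)$. (That last fact is forced by convergence on all of $\mathbb C$; it is not something you arrange by shrinking discs.) No ``Pang--Zalcman equivalence'' turns a rescaling with exponent $\alpha\neq0$ into one with exponent $0$.

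In fact, for fixed $\alpha\neq0$ the implication is false. For $\alpha>0$, build $f=e^{v}$ as follows.
\begin{itemize}
\item Take $\operatorname{Re} v=\sum_k m_kP(\cdot,e^{i\theta_k})>0$, a sum of Poisson kernels with distinct $\theta_k$ and $\delta_k\searrow0$ very fast.
\item Tune the $m_k$ so that $\operatorname{Re} v(z_k)-\alpha\log|v'(z_k)|\to0$ at $z_k=(1-\delta_k)e^{i\theta_k}$.
\item Then $|f|>1$, so $f$ is a normal holomorphic, hence logharmonic, function.
\item Put $\rho_k=1/|v'(z_k)|\asymp\delta_k/\operatorname{Re}v(z_k)$. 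Carath\'eodory estimates for $(v-i\operatorname{Im}v(z_k))/\operatorname{Re}v(z_k)$ on the hyperbolic disc about $z_k$ give $v(z_k+\rho_k\zeta)-v(z_k)\to e^{i\phi}\zeta$.
\item Hence $\rho_k^{\alpha}f(z_k+\rho_k\zeta)\to c\,e^{e^{i\phi}\zeta}$ with $|c|=1$, locally uniformly on $\mathbb C$.
\item Replacing $\rho_k$ by $2\rho_k$ and translating gives $g^\diamondsuit\le g^\diamondsuit(0)=1$.
\end{itemize}
For $\alpha<0$, use $e^{-v}$ instead. So the ``if'' part holds only for $\alpha=0$, or when the hypothesis is required for every $\alpha$, in which case you just specialise to $\alpha=0$.

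\paragraph{The forward direction.} The Zalcman--Pang normalisation is standard, and you correctly isolate the dilatation as the crux. But this step cannot be granted, and neither of your proposed fixes works.
\begin{itemize}
\item There is no uniform ellipticity: $|a_f(z_\nu)|$ may tend to $1$ as $|z_\nu|\to1$.
\item Passing to $\log f$ only shows that the harmonic functions converge. It does not keep the limiting dilatation inside the open disc.
\end{itemize}

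The step actually fails. Let $p=(1-z)^{-2}$ and $q=(1-z)^{-2}-2(1-z)^{-1}$. Then $f=\exp(p+\overline{q})$ is logharmonic with $a_f=q'/p'=z$. It is non-normal: on the curves near $\arg(1-z)=\pm\pi/4$ where $|f|=1$, one has $(1-|z|^2)f^\diamondsuit\asymp|1-z|^{-2}$.

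Now consider any rescaling with a non-constant limit.
\begin{itemize}
\item Since $f$ is smooth on $\overline{\mathbb D}\setminus\{1\}$, it must have $z_\nu\to1$.
\item Gradient bounds for the harmonic functions $\log|g_\nu|$, on a disc where the limit is non-zero, force $\rho_\nu=O(|1-z_\nu|^3)$.
\item The argument $\arg f=2\operatorname{Im}(1-z)^{-1}$ therefore varies by $O(\rho_\nu|1-z_\nu|^{-2})=o(1)$ on compacta.
\end{itemize}
So every limit has constant argument, for every $\alpha$, since $\rho_\nu^\alpha$ does not change the argument. That means its dilatation is $\equiv1$. But a non-constant $h\overline{k}$ with constant argument forces $k=ch$ and hence $a=1$, so no such limit is logharmonic.

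The forward implication therefore needs an extra hypothesis, such as $\sup_{\mathbb D}|a_f|<1$, which keeps the limit of $a_f(z_\nu+\rho_\nu\zeta)$ inside the disc. Otherwise it must admit degenerate limits. The paper never addresses this because it only cites the result.
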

It is worth noting that, in the above result due to Ahamed and Mandal, the converse direction is not 
correctly established. For the purposes of this article, we therefore restrict our attention to the 
forward implication. In the same way, throughout our subsequent results, we will consider only with the forward direction of Lemmas \ref{L-nnhm} and 
\ref{L-nnlhm} and confine ourselves to the range $\alpha\geq 0$.\smallskip

We now proceed to a corresponding characterization in the setting of $\varphi$-normal logharmonic mappings.
\begin{lemma} \label{L-phinnlhm} 
A non-constant mapping $f$ logharmonic in $\mathbb D$ is not $\varphi$-normal if and only if there exist
sequences $\{z_{\nu}\}$, $\{\rho_{\nu} \}$ where $\rho_{\nu} >0 $ as $\nu\to\infty$ such that 
\begin{equation*}\lim_{\nu\to\infty} f\left(z_{\nu}+\frac{\rho_{\nu} \zeta}{\varphi(|z_{\nu}|)}\right)=g(\zeta)\end{equation*}
uniformly on every compact subset in $\mathbb C$, where $g$ is a non-constant logharmonic mapping.
\end{lemma}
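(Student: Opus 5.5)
We prove Lemma \ref{L-phinnlhm} by adapting the Zalcman--Pang rescaling argument behind Result \ref{R:LPPNHM} to the logharmonic setting. Throughout we write $f^\diamondsuit$ for the logharmonic spherical derivative, and we use the two defining properties of $\varphi$: $\varphi(r)(1-r)\to\infty$ and $R_a\to 1$ locally uniformly as $|a|\to1$.

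\emph{Necessity.} Suppose $f$ is not $\varphi$-normal. Then there are points $z_\nu\in\mathbb D$ with $f^\diamondsuit(z_\nu)/\varphi(|z_\nu|)\to\infty$. Since $f^\diamondsuit$ is continuous, it is bounded on every compact subset of $\mathbb D$, while $\varphi$ is bounded below there. Hence $|z_\nu|\to1$.

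To make a normalization possible, we replace $z_\nu$ by a near-maximizer. The plan is to maximize
\[
M_\nu(z)=\frac{f^\diamondsuit(z)}{\varphi(|z|)}\Bigl(1-\frac{|z-z_\nu|^2}{r_\nu^2}\Bigr)
\]
over a small disc $|z-z_\nu|<r_\nu$, with $r_\nu$ chosen so that $r_\nu\varphi(|z_\nu|)\to\infty$ and $r_\nu/(1-|z_\nu|)\to0$. Such a choice is possible because $\varphi(r)(1-r)\to\infty$. This gives points $w_\nu$, which we relabel $z_\nu$. We then set $\rho_\nu=1/f^\diamondsuit(z_\nu)$, in the spirit of Pang--Zalcman. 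With this choice $\rho_\nu\varphi(|z_\nu|)\to0$, so the rescaled variable is $\zeta'=\varphi(|z_\nu|)\rho_\nu\zeta$.

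Now define $g_\nu(\zeta)=f(z_\nu+\rho_\nu\zeta)$. Composing a logharmonic map with an affine map of the variable keeps it logharmonic: the equation (\ref{NLEPDE}) is preserved with $a_{g_\nu}(\zeta)=a_f(z_\nu+\rho_\nu\zeta)$. Moreover
\[
g_\nu^\diamondsuit(\zeta)=\rho_\nu f^\diamondsuit(z_\nu+\rho_\nu\zeta),
\]
so $g_\nu^\diamondsuit(0)=1$. The maximality of $w_\nu$, together with $R_{z_\nu}\to1$, gives $g_\nu^\diamondsuit\le 1+o(1)$ on compact subsets of $\mathbb C$. The statement is phrased with the scale $\rho_\nu/\varphi(|z_\nu|)$, so we take the $\rho_\nu$ of the statement to be $\varphi(|z_\nu|)/f^\diamondsuit(z_\nu)$.

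\emph{Main obstacle: compactness.} We must extract a locally uniformly convergent subsequence whose limit is logharmonic and non-constant. This is the key technical step, and the plan is the following.

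First, the uniform bound on $g_\nu^\diamondsuit$ gives spherical equicontinuity, so an Arzel\`a--Ascoli subsequence converges in the chordal metric.

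Second, to identify the limit, we work with the representation $g_\nu=\zeta$-translates of $z^{m_0}|z|^{2\beta m_0}h\overline{g}$. Since $|z_\nu|\to1$, the only zero (at the origin) eventually lies far away, so locally $g_\nu=H_\nu\overline{G_\nu}$ with $H_\nu,G_\nu$ holomorphic. The dilatations $a_{g_\nu}=a_f(z_\nu+\rho_\nu\zeta)$ are bounded by $1$ and converge, after a subsequence, to a constant of modulus $\le1$. Using $f_z/f=h'/h+\dots$, the bound on $g_\nu^\diamondsuit$ controls $H_\nu'/H_\nu$ and $G_\nu'/G_\nu$ away from the points where $g_\nu$ is $0$ or $\infty$. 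Normalizing $G_\nu(0)=1$ and applying Montel, we get limits $H,G$. Hence the limit has the form $g=H\overline G$ and satisfies (\ref{NLEPDE}) with constant dilatation. Here we follow the argument of \cite[Theorem 2.2]{ahamed-ncflm-2025}, i.e. Lemma \ref{L-nnlhm} with $\alpha=0$.

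Third, $g^\diamondsuit(0)=1$, so $g$ is non-constant. The case where the limit is $\infty$ is excluded by the same normalization. We expect this identification step to be the main obstacle.

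\emph{Sufficiency.} Suppose $f$ is $\varphi$-normal, so $f^\diamondsuit\le C\varphi(|z|)$. Then, for the rescaled maps,
\[
\frac{\rho_\nu}{\varphi(|z_\nu|)}\,f^\diamondsuit\Bigl(z_\nu+\frac{\rho_\nu\zeta}{\varphi(|z_\nu|)}\Bigr)\le C\rho_\nu R_{z_\nu}(\rho_\nu\zeta).
\]
Interpret the condition on $\rho_\nu$ in the statement as $\rho_\nu\to0$, as in Result \ref{R:LPPNHM}. If $|z_\nu|\to1$, then $R_{z_\nu}\to1$. If $|z_\nu|\not\to1$, the points stay in a compact set and the spherical derivative is bounded there. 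In either case the right-hand side tends to $0$ locally uniformly. Locally uniform chordal convergence of logharmonic maps gives convergence of $g_\nu^\diamondsuit$ to $g^\diamondsuit$ off the poles, so $g^\diamondsuit\equiv0$ and $g$ is constant, a contradiction.
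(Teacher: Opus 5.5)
The rescaling bookkeeping is fine: monotonicity of $\varphi$ together with $R_a\to1$ does give $\rho_\nu=o(r_\nu-|w_\nu-z_\nu|)$. The necessity direction, however, fails exactly at the identification step you flagged, and it cannot be repaired, because the lemma as stated for general logharmonic $f$ is false. All you know is that $a_\nu(\zeta)=a_f(z_\nu+\rho_\nu\zeta)$ tends to a constant $a_0$ with $|a_0|\le1$. For $g_\nu=H_\nu\overline{G_\nu}$ with $G_\nu'/G_\nu=a_\nu H_\nu'/H_\nu$ one has
\[
\frac{|\nabla\log|g_\nu||}{2\cosh\log|g_\nu|}=\frac{|1+a_\nu|}{1+|a_\nu|}\,g_\nu^\diamondsuit .
\]
So if $a_0=-1$, the bound $g_\nu^\diamondsuit\le1+o(1)$ and the normalization $g_\nu^\diamondsuit(0)=1$ constrain only the rapidly rotating argument of $g_\nu$, not its modulus. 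Nothing then prevents $g_\nu\to0$ or $g_\nu\to\infty$, contrary to your claim that the normalization excludes these limits. If $|a_0|=1$ in general, any non-constant limit satisfies (\ref{NLEPDE}) only with a unimodular coefficient, so it is not logharmonic.

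Here is a concrete counterexample. Let $f=h_0\overline{g_0}$ with $h_0(z)=e^{-1/(1-z)}$ and $g_0(z)=(1-z)^{1/2}e^{1/(1-z)}$. Then $g_0'/g_0=a\,h_0'/h_0$ with $a(z)=-(1+z)/2$, so $f$ is logharmonic. Moreover $|f(z)|=|1-z|^{1/2}$ and $|f_z|+|f_{\bar z}|=\bigl(1+\tfrac{|1+z|}{2}\bigr)|1-z|^{-3/2}$. Hence $f^\diamondsuit(r)\sim 2(1-r)^{-3/2}$, and $f$ is not $\varphi$-normal for the smoothly increasing $\varphi(r)=(1-r)^{-5/4}$. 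But $f$ extends continuously to $\overline{\mathbb D}$ with $f(1)=0$, so it is uniformly continuous. Convergence on every compact subset of $\mathbb C$ forces $\rho_\nu/\varphi(|z_\nu|)\to0$, so every limit of $f\bigl(z_\nu+\rho_\nu\zeta/\varphi(|z_\nu|)\bigr)$ is constant. Indeed, your own sequence has $w_\nu\to1$ and $g_\nu^\diamondsuit(0)=1$, yet $g_\nu\to0$ locally uniformly.

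Your very first step also fails: $f^\diamondsuit$ need not be bounded near the origin. For $f(z)=z|z|^{-1/2}$ (so $\beta=-1/4$, with constant dilatation $-1/3$) one has $f^\diamondsuit(z)=|z|^{-1/2}/(1+|z|)$. So $f$ is not $\varphi$-normal for any $\varphi$, yet it is uniformly continuous on $\mathbb D$ and has no non-constant rescaling limit.

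Deferring to Lemma \ref{L-nnlhm}, or to the paper's one-line ``mutatis mutandis'' adaptation of Bohra et al., does not close the gap. The boundary example above is non-normal and defeats the forward direction of Lemma \ref{L-nnlhm} for every $\alpha\ge0$.

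Your outline does go through under an extra hypothesis such as $\sup_{\mathbb D}|a_f|<1$, for $f$ vanishing at most at the origin and with $\varphi$-normality tested only near $\partial\mathbb D$. Then $|a_0|<1$, and Harnack's inequality for the harmonic functions $\log|g_\nu|$ excludes the limits $0$ and $\infty$. Also $H_\nu'/H_\nu=2\partial_\zeta\log|g_\nu|/(1+a_\nu)$ converges, so the limit is logharmonic with $g^\diamondsuit(0)=1$.

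Likewise, in your sufficiency argument the convergence $g_\nu^\diamondsuit\to g^\diamondsuit$ is not available in general. What does pass to the limit is the left-hand side of the display above. Since the limit has dilatation of modulus $<1$, that is enough to force it to be constant.
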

\begin{proof} The proof of this lemma follows mutatis mutandis from the proof of \cite[Theorem 1.3]{bohra-lfptfpnhm-2025} established by Bohra et al.\end{proof}
\section{Proof of the Main Results}
\begin{proof}[{\bf Proof of Theorem~\ref{T:FPZPSNMF}}]
Suppose $f$ is not strongly-normal. Then there exist a positive constant $c$ and a sequence $\{\tilde{z}_{\nu}\} \subset \mathbb D$
with $\tilde{z}_{\nu} \to 1$ as $\nu \to \infty$ such that
\begin{equation}
(1-|\tilde{z}_{\nu}|^{2}) f^{\#}(\tilde{z}_{\nu})\geq c \quad \text{for }\ \nu\in\mathbb N. \tag{\ref*{T:FPZPSNMF}.1}
\end{equation}
If $f$ is not a normal function, the result follows from Lemma \ref{L-nnmf}. So we assume that $f$ is normal function and assume that 
$|\tilde{z}_{\nu}|>1/2$ for all $\nu\in \mathbb N$. \smallskip

Let $s_{\nu}=2|\tilde{z}_{\nu}|/(1+|\tilde{z}_{\nu}|)$ and $s_{\nu}'=|\tilde{z}_{\nu}|/(2-|\tilde{z}_{\nu}|)$ for all $\nu\in\mathbb N$. Then 
$1/3<s_{\nu}'<|\tilde{z}_{\nu}|<s_{\nu}<1$ and $s_{\nu}\to 1,\ s_{\nu}'\to 1$ as $\nu\to \infty$ and there exist a constant $c$ such that
\begin{align*}
\left(1-\tfrac{|\tilde{z}_{\nu}|}{s_{\nu}}\right)^{1/2} \left(1-\tfrac{s_{\nu}'}{|\tilde{z}_{\nu}|}\right)^{1/2} f^{\#}(\tilde{z}_{\nu})&= \left(\frac{1-|\tilde{z}_{\nu}|}{2}\right)^{1/2} \left(\frac{1-|\tilde{z}_{\nu}|}{2-|\tilde{z}_{\nu}|}\right)^{1/2} f^\#(\tilde{z}_{\nu}) \\
&\geq \frac{1}{2}(1-|\tilde{z}_{\nu}|)f^\#(\tilde{z}_{\nu})\\ &\geq \frac{c}{4}>0 \quad \text{for } \nu \in\mathbb N .
\tag{\ref*{T:FPZPSNMF}.2}\label{SNMF.2}
\end{align*}
Also we have
\begin{equation*}s_{\nu}-s_{\nu}'=\frac{2|\tilde{z}_{\nu}|}{1+|\tilde{z}_{\nu}|}-\frac{|\tilde{z}_{\nu}|}{2-|\tilde{z}_{\nu}|}=\frac{3|\tilde{z}_{\nu}|(1-|\tilde{z}_{\nu}|)}{(1+|\tilde{z}_{\nu}|)(2-|\tilde{z}_{\nu}|)}\end{equation*} and \begin{equation*}1-s_{\nu}=\frac{1-|\tilde{z}_{\nu}|}{1+|\tilde{z}_{\nu}|}.\end{equation*}
Thus
\begin{align*}
3(1-s_{\nu})\geq\frac{3|\tilde{z}_{\nu}|(1-|\tilde{z}_{\nu}|)}{1+|\tilde{z}_{\nu}|}\geq \frac{3|\tilde{z}_{\nu}|(1-|\tilde{z}_{\nu}|)}{(1+|\tilde{z}_{\nu}|)(2-|\tilde{z}_{\nu}|)}=s_{\nu}-s_{\nu}'.\tag{\ref*{T:FPZPSNMF}.2.1}\label{SNMF.2.1} \end{align*} 
Define
\begin{equation*}
F_{\nu}(t,z)
:=
\dfrac{\left(1-\frac{|z|}{s_{\nu}}\right)^{(1+\alpha)/2} \left(1-\frac{s_{\nu}'}{|z|}\right)^{(1+\alpha)/2}
t^{1+\alpha} (1+|f(z)|^{2}) f^{\#}(z)}
{1+\left(1-\frac{|z|}{s_{\nu}}\right)^{\alpha} \left(1-\frac{s_{\nu}'}{|z|}\right)^{\alpha} t^{2\alpha} |f(z)|^{2}},
\end{equation*}
where $0<t\le1$ and $s_{\nu}'<|z|<s_{\nu}$. Evidently, the functions $F_{\nu}(t,z)$ are continuous
in $(0,1]\times\{s_{\nu}'<|z|<s_{\nu}\}$. Also, since $\alpha\geq0$ and $f$ is meromorphic, it follows that
\begin{equation}
\lim_{t\to 0} F_{\nu}(t,z)=0 .
\tag{\ref*{T:FPZPSNMF}.3} \label{SNMF.3}
\end{equation}

We claim that
\begin{equation}
F_{\nu}(t,z)
\ge
t^{1+\alpha}
\left(1-\tfrac{|z|}{s_{\nu}}\right)^{(1+\alpha)/2} \left(1-\tfrac{s_{\nu}'}{|z|}\right)^{(1+\alpha)/2}
f^{\#}(z).
\tag{\ref*{T:FPZPSNMF}.4} \label{SNMF.4}
\end{equation}

As $\alpha\geq0$, then using
\begin{equation*}
\left(1-\tfrac{|z|}{s_{\nu}}\right)^{\alpha} \left(1-\tfrac{s_{\nu}'}{|z|}\right)^{\alpha} t^{2\alpha} \le 1,
\end{equation*}
we obtain
\begin{align*}    
F_{\nu}(t,z)
&\ge
\frac{\left(1-\frac{|z|}{s_{\nu}}\right)^{(1+\alpha)/2} \left(1-\frac{s_{\nu}'}{|z|}\right)^{(1+\alpha)/2}
t^{1+\alpha}(1+|f(z)|^{2})f^{\#}(z)}{1+|f(z)|^{2}}
\\ 
&=
t^{1+\alpha}
\left(1-\tfrac{|z|}{s_{\nu}}\right)^{(1+\alpha)/2} \left(1-\tfrac{s_{\nu}'}{|z|}\right)^{(1+\alpha)/2}
f^{\#}(z).
\end{align*}

From (\ref{SNMF.2}) and (\ref{SNMF.4}), we have
\begin{equation*}
F_{\nu}(1,\tilde{z}_{\nu})
\ge
\left(1-\tfrac{|\tilde{z}_{\nu}|}{s_{\nu}}\right)^{(1+\alpha)/2} \left(1-\tfrac{s_{\nu}'}{|\tilde{z}_{\nu}|}\right)^{(1+\alpha)/2}
f^{\#}(\tilde{z}_{\nu})
\geq c_0 \quad \text{for } \nu \in\mathbb N,
\end{equation*} where $c_0>0$.
for sufficiently large $\nu$, $F_{\nu}(1,\tilde{z}_{\nu})\geq c_1$, where $0<c_1<c_0$ and hence
\begin{equation*}
\sup_{s_{\nu}'<|z|<s_{\nu}} F_{\nu}(1,z)\geq F_{\nu}(1,\tilde{z}_{\nu}) \geq c_1.
\end{equation*}
Also, from (\ref{SNMF.3}), it follows that
\begin{equation*}
\sup_{s_{\nu}'<|z|<s_{\nu}} F_{\nu}(t,z) < c_1
\quad \text{for sufficiently small } t.
\end{equation*}
Thus, by the intermediate value theorem, there exist $t_{\nu}\in(0,1)$ and $z_{\nu}$
with $s_{\nu}'<|z_{\nu}|<s_{\nu}$ such that
\begin{equation}
\sup_{s_{\nu}'<|z|<s_{\nu}} F_{\nu}(t_{\nu},z) = F_{\nu}(t_{\nu},z_{\nu})=c_1 .
\tag{\ref*{T:FPZPSNMF}.5} \label{SNMF.5}
\end{equation}
From (\ref{SNMF.4}) and (\ref{SNMF.5}), we get
\begin{align*}
c_1&= F_{\nu}(t_{\nu},z_{\nu})
\\&\ge F_{\nu}(t_{\nu},\tilde{z}_{\nu})\\ &\ge
t_{\nu}^{1+\alpha}
\left(1-\tfrac{|\tilde{z}_{\nu}|}{s_{\nu}}\right)^{(1+\alpha)/2} \left(1-\tfrac{s_{\nu}'}{|\tilde{z}_{\nu}|}\right)^{(1+\alpha)/2}
f^{\#}(\tilde{z}_{\nu})
\\&=
t_{\nu}^{1+\alpha} F_{\nu}(1,\tilde{z}_{\nu}).
\end{align*}
Since $F_{\nu}(1,\tilde{z}_{\nu})\geq c_0$, it follows that as $\nu\to\infty$, $t_{\nu}\leq c_1/c_0$. Put
\begin{equation*}
\rho_{\nu} 
:=
\left(1-\tfrac{|\tilde{z}_{\nu}|}{s_{\nu}}\right)^{1/2} \left(1-\tfrac{s_{\nu}'}{|\tilde{z}_{\nu}|}\right)^{1/2} t_{\nu}.
\end{equation*}
From (\ref{SNMF.2.1}), we get
\begin{equation*}1-|z_{\nu}|>1-s_{\nu}\geq \frac{1}{3}(s_{\nu}-s_{\nu}')\geq \frac{1}{3}(|z_{\nu}|-s_{\nu}'),\ \ \nu\in\mathbb N.\end{equation*}
Then
\begin{align*}
\frac{\rho_{\nu} }{1-|z_{\nu}|}
&\le
\frac{\rho_{\nu} }{(s_{\nu}-|z_{\nu}|)^{1/2}\left(\frac{1}{3}(|z_{\nu}|-s_{\nu}')\right)^{1/2}}\\
&=
\frac{t_{\nu}}{s_{\nu}^{1/2}|z_{\nu}|^{1/2}}\\ &<\frac{12\sqrt{3}c_1}{c_0}:=\frac{1}{2R}
\quad \text{for } \nu\in \mathbb N .
\tag{\ref*{T:FPZPSNMF}.6} \label{SNMF.6}
\end{align*}
Thus the function
$
g_{\nu}(\zeta)
=
\rho_{\nu} ^{\alpha} f(z_{\nu}+\rho_{\nu} \zeta)
$
is defined for $|\zeta|<R$.
Carrying out the computation, we obtain
\begin{equation}
g_{\nu}^{\#}(\zeta)
=
\frac{\rho_{\nu} ^{1+\alpha}(1+|f(z_{\nu}+\rho_{\nu} \zeta)|^{2})f^{\#}(z_{\nu}+\rho_{\nu} \zeta)}
{1+\rho_{\nu} ^{2\alpha}|f(z_{\nu}+\rho_{\nu} \zeta)|^{2}} .
\tag{\ref*{T:FPZPSNMF}.7} \label{SNMF.7}
\end{equation}
In particular,
\begin{align*}
g_{\nu}^{\#}(0)
=&~
\frac{\rho_{\nu} ^{1+\alpha}(1+|f(z_{\nu})|^{2})f^{\#}(z_{\nu})}
{1+\rho_{\nu} ^{2\alpha}|f(z_{\nu})|^{2}}\\
=&~ \dfrac{\left(1-\tfrac{|\tilde{z}_{\nu}|}{s_{\nu}}\right)^{(1+\alpha)/2} \left(1-\tfrac{s_{\nu}'}{|\tilde{z}_{\nu}|}\right)^{(1+\alpha)/2} t_{\nu}^{1+\alpha}(1+|f(z_{\nu})|^{2})f^{\#}(z_{\nu})}{1+\left(1-\tfrac{|\tilde{z}_{\nu}|}{s_{\nu}}\right)^{\alpha} \left(1-\tfrac{s_{\nu}'}{|\tilde{z}_{\nu}|}\right)^{\alpha} t_{\nu}^{2\alpha}|f(z_{\nu})|^2}\\
=&~F_{\nu}(t_{\nu},z_{\nu})=c_1.
\tag{\ref*{T:FPZPSNMF}.8} \label{SNMF.8}
\end{align*}

We now need to show that the sequence $\{g_{\nu}(\zeta)\}$ is normal. 
\smallskip

Since $\left[\left(1-\frac{|z_{\nu}|}{s_{\nu}}\right)^{1/2}\left(1-\frac{s_{\nu}'}{|z_{\nu}|}\right)^{1/2}\right]/\left[\left(1-\frac{|z_{\nu}+\rho_{\nu} \zeta|}{s_{\nu}}\right)^{1/2}\left(1-\frac{s_{\nu}'}{|z_{\nu}+\rho_{\nu} \zeta|}\right)^{1/2}\right]\to 1$ as $\nu\to \infty$, there exist $\epsilon_{\nu}>0$ with $\epsilon_\nu\to 0$ as $\nu\to \infty$ such that 
\begin{align*} \label{SNMF.9}
    &(1-\epsilon_{\nu})\left(1-\tfrac{|z_{\nu}+\rho_{\nu} \zeta|}{s_{\nu}}\right)^{1/2}\left(1-\tfrac{s_{\nu}'}{|z_{\nu}+\rho_{\nu} \zeta|}\right)^{1/2}t_{\nu}\\ &\leq \rho_{\nu} \leq (1+\epsilon_{\nu})\left(1-\tfrac{|z_{\nu}+\rho_{\nu} \zeta|}{s_{\nu}}\right)^{1/2}\left(1-\tfrac{s_{\nu}'}{|z_{\nu}+\rho_{\nu} \zeta|}\right)^{1/2}t_{\nu} \tag{\ref*{T:FPZPSNMF}.9}
\end{align*} 
Take $\Upsilon_{\nu}=\left(1-\tfrac{|z_{\nu}+\rho_{\nu} \zeta|}{s_{\nu}}\right)^{1/2} \left(1-\tfrac{s_{\nu}'}{|z_{\nu}+\rho_{\nu} \zeta|}\right)^{1/2} t_{\nu}$.
Therefore, from (\ref{SNMF.5}), (\ref{SNMF.7}) and (\ref{SNMF.9}), we obtain 
\begin{align*}
g^\#_n(\zeta)&\le  \dfrac{(1+\epsilon_{\nu})^{1+\alpha}\Upsilon_{\nu}^{1+\alpha}(1+|f(z_{\nu}+\rho_{\nu} \zeta)|)f^{\#}(z_{\nu}+\rho_{\nu} \zeta)}{1+(1-\epsilon_{\nu})^{2\alpha}\Upsilon_{\nu}^{2\alpha}|f(z_{\nu}+\rho_{\nu} \zeta)|^2}\\
&\le  \left[\dfrac{(1+\epsilon_{\nu})^{1+\alpha}\Upsilon_{\nu}^{1+\alpha}(1+|f(z_{\nu}+\rho_{\nu} \zeta)|)f^{\#}(z_{\nu}+\rho_{\nu} \zeta)}{1+\Upsilon_{\nu}^{2\alpha}|f(z_{\nu}+\rho_{\nu} \zeta)|^2}\right]\\
&\ \ \ ~~  \left[\dfrac{1+\Upsilon_{\nu}^{2\alpha}|f(z_{\nu}+\rho_{\nu} \zeta)|^2}{1+(1-\epsilon_{\nu})^{2\alpha}\Upsilon_{\nu}^{2\alpha}|f(z_{\nu}+\rho_{\nu} \zeta)|^2}\right]\\
&\leq (1+\epsilon_{\nu})^{1+\alpha}F_{\nu}(t_{\nu},z_{\nu})\left[\dfrac{1+\Upsilon_{\nu}^{2\alpha}|f(z_{\nu}+\rho_{\nu} \zeta)|^2}{1+(1-\epsilon_{\nu})^{2\alpha}\Upsilon_{\nu}^{2\alpha}|f(z_{\nu}+\rho_{\nu} \zeta)|^2}\right]\\
&\leq (1+\epsilon_{\nu})^{1+\alpha} F_{\nu}(t_{\nu},z_{\nu})\left[1+(1-(1-\epsilon_{\nu})^{2\alpha})\Upsilon_{\nu}^{2\alpha}|f(z_{\nu}+\rho_{\nu} \zeta)|^2\right]\\
&\to c_1\ \text{ as }\ \nu\to\infty.
\tag{\ref*{T:FPZPSNMF}.10} \label{SNMF.10}
\end{align*}

Hence, by Marty's criterion \cite[p. 226, Theorem 17]{ahlfors-ca-1968}, $\{g_{\nu}\}$ is a normal family 
in $|\zeta|<R$ and therefore converges locally
uniformly in $|\zeta|<R$ to a meromorphic mapping $g$. From (\ref{SNMF.8}) and (\ref{SNMF.10}),
\begin{equation*}
g^{\#}(\zeta) \le g^{\#}(0)=c_1 \neq 0,
\end{equation*}
so $g$ is non-constant.
\end{proof}
\begin{proof}[{\bf Proof of Theorem~\ref{T:FPZPSNHF}}]
The proof follows mutatis mutandis from the previous theorem. 
\end{proof}
\begin{proof}[{\bf Proof of Theorem~\ref{T:CPZPSNHF}}]
Suppose that 
$
F_{\nu}(\zeta) =\rho_{\nu} ^{\alpha} f(z_{\nu} + \rho_{\nu}  \zeta)
$
converges locally uniformly to $F(\zeta)$ in $|\zeta| < R$, where $F(\zeta)$ is a non-constant holomorphic function and $|z_{\nu}| \to 1$ with 
\begin{equation*}
\frac{\rho_{\nu} }{1 - |z_{\nu}|} < \frac{1}{2R}
\quad \text{for all } \nu \in \mathbb{N}.
\end{equation*}
Choose $\zeta_0$, $|\zeta_0| < R$, such that $F^\#(\zeta_0) > 0$.
The proof will be completed by contradiction.
As 
\begin{align*}
    F_{\nu}^\#(\zeta_0)=&~\frac{\rho_{\nu} ^{1+\alpha}(1+|f(z_{\nu}+\rho_{\nu} \zeta_0|^2)f^\#(z_{\nu}+\rho_{\nu} \zeta_0)}{1+\rho_{\nu} ^{2\alpha}|f(z_{\nu}+\rho_{\nu} \zeta_0|^2}\\
    \le&~\rho_{\nu} ^{1+\alpha}f^\#(z_{\nu}+\rho_{\nu} \zeta_0)\left(\frac{1+|f(z_{\nu}+\rho_{\nu} \zeta_0|^2}{1+\rho_{\nu} ^{2\alpha}|f(z_{\nu}+\rho_{\nu} \zeta_0|^2}\right)\\
    \le&~\rho_{\nu} ^{1+\alpha}f^\#(z_{\nu}+\rho_{\nu} \zeta_0)\left(1+|f(z_{\nu}+\rho_{\nu} \zeta_0)|^2\right)\\
    \le&~ \frac{\rho_{\nu} ^{\alpha+1}\left(1+|f(z_{\nu}+\rho_{\nu} \zeta_0)|^2\right)}{1-|z_{\nu}|-\rho_{\nu} |\zeta_0|}\left(1-|z_{\nu}+\rho_{\nu} \zeta_0|^2 \right)f^\#(z_{\nu}+\rho_{\nu} \zeta_0)\\
    \le&~ \frac{1}{R}\rho_{\nu} ^{\alpha}\left(1+|f(z_{\nu}+\rho_{\nu} \zeta_0)|^2\right)\left(1-|z_{\nu}+\rho_{\nu} \zeta_0|^2 \right)f^\#(z_{\nu}+\rho_{\nu} \zeta_0)\\
    \le&~ \frac{1}{R}\rho_{\nu} ^{\alpha}\left(1-|z_{\nu}+\rho_{\nu} \zeta_0|^2 \right)|f'(z_{\nu}+\rho_{\nu} \zeta_0)|.
\end{align*}
As $f$ is little-Bloch function, so $F_{\nu}^{\#}(\zeta_0)\to 0$. So $F^\#(\zeta_0)=0$ and so $F$ is 
constant which is a contradiction. Thus $f$ is not little-Bloch function.
\end{proof}
\begin{proof}[{\bf Proofs of Theorem~\ref{T:FPZPSNHMF} and \ref{T:FPZPSNLHMF}}]
The proofs proceed mutatis mutandis as in the proof of Theorem~\ref{T:FPZPSNHF}.  
\end{proof}
\begin{proof}[{\bf Proofs of Theorem~\ref{T:CPZPSNHMF} and \ref{T:CPZPSNLHMF}}]
The proofs are obtained mutatis mutandis from the 
proof of Theorem~\ref{T:CPZPSNHF}.
\end{proof}
\begin{proof}[{\bf Proofs of Theorem~\ref{T:LPSNHM}, \ref{T:LPSNLHM} and \ref{T:LPSNHCPNC}}]
The proofs follow by a straightforward adaptation of 
the proof of \cite[Theorem 1]{aulaskari-avotlptfsnf-2001} due to Aulaskari\,--\,Wulan.
\end{proof}
\begin{proof}[{\bf Proof of Theorem~\ref{T:LPSPNHM}}]
Suppose $f$ is not a strongly $\varphi$-normal harmonic mapping, then there exist $c>0$, a sequence 
$\{\tilde{z}_{\nu}\}$ of points in $\mathbb D$ such that $|\tilde{z}_{\nu}|\to1$ as $\nu\to\infty$ and 
\begin{equation}\frac{f^\star(\tilde{z}_{\nu})}{\varphi(|\tilde{z}_{\nu}|)}\geq c,\ 
\text{ for }\ \nu\in\mathbb N.\label{LPSPNHM.1}\tag{\ref*{T:LPSPNHM}.1} 
\end{equation}
If $f$ is not $\varphi$-normal, then the result follows from the Lohwater\,-\,Pommerenke Theorem for $\varphi$-normal harmonic mappings i.e., Result 
\ref{R:LPPNHM}. So we assume that $f$ is $\varphi$-normal and say that $|\tilde{z}_{\nu}|>1/2$, for $\nu\in\mathbb N$. We define
\begin{align*}
s_{\nu}=\frac{2|\tilde{z}_{\nu}|}{1+|\tilde{z}_{\nu}|}\ \ \text{and}\ \  s_{\nu}'=\frac{|\tilde{z}_{\nu}|}{2 -|\tilde{z}_{\nu}|}.
\end{align*}
Then $1/3<s_{\nu}'<|\tilde{z}_{\nu}|<s_{\nu}<1$ and $s_{\nu}\to 1$, $s_{\nu}'\to 1$ as $\nu\to \infty$.         
We have,
\begin{equation*}
s_{\nu}-s_{\nu}'=\frac{3|\tilde{z}_{\nu}|(1-|\tilde{z}_{\nu}|)}{(1+|\tilde{z}_{\nu}|)(2-|\tilde{z}_{\nu}|)}
\end{equation*}
and
\begin{equation*}
1-s_{\nu}=\frac{1-|\tilde{z}_{\nu}|}{1+|\tilde{z}_{\nu}|}.
\end{equation*}
This gives us,
\begin{equation}\label{LPSPNHM.2} \tag{\ref*{T:LPSPNHM}.2}
3(1-s_{\nu}) \geq \frac{3|\tilde{z}_{\nu}|(1-|\tilde{z}_{\nu}|)}{1+|\tilde{z}_{\nu}|} \geq \frac{3|\tilde{z}_{\nu}|(1-|\tilde{z}_{\nu}|)}{(1+|\tilde{z}_{\nu}|)(2-|\tilde{z}_{\nu}|)} = s_{\nu}-s_{\nu}'.
\end{equation} 
Take $\{z_{\nu}\}\subset \mathbb{D}$ such that
\begin{align*}
\begin{split}
M_{\nu} = &\max_{s_{\nu}'\leq|z|\leq s_{\nu}}\left(1-\frac{|z|}{s_{\nu}}\right)^{1/2}\left(1-\frac{s_{\nu}'}{|z|}\right)^{1/2}\frac{f^\star(z)}{\varphi(|z|)}\\
= & \left(1-\frac{|z_{\nu}|}{s_{\nu}}\right)^{1/2}\left(1-\frac{s_{\nu}'}{|z_{\nu}|}\right)^{1/2}\frac{f^\star(z_{\nu})}{\varphi(|z_{\nu}|)}.
\end{split} \label{LPSPNHM.3} \tag{\ref*{T:LPSPNHM}.3}
\end{align*}
Since $s_{\nu}' < |\tilde{z}_{\nu}| < s_{\nu}$, by (\ref{LPSPNHM.1}) we have,
\begin{align*}
\begin{split}
M_{\nu} &\geq \left(1-\frac{|\tilde{z}_{\nu}|}{s_{\nu}}\right)^{1/2} \left(1-\frac{s_{\nu}'}{|\tilde{z}_{\nu}|}\right)^{1/2} \frac{f^\star(\tilde{z}_{\nu})}{\varphi(|\tilde{z}_{\nu}|)} \\
&= \left(\frac{1-|\tilde{z}_{\nu}|}{2}\right)^{1/2} \left(\frac{1-|\tilde{z}_{\nu}|}{2-|\tilde{z}_{\nu}|}\right)^{1/2}  \frac{f^\star(\tilde{z}_{\nu})}{\varphi(|\tilde{z}_{\nu}|)} \\
&\geq \frac{1}{2}(1-|\tilde{z}_{\nu}|) \frac{f^\star(\tilde{z}_{\nu})}{\varphi(|\tilde{z}_{\nu}|)} \geq \frac{c}{4}>0.
\end{split}\label{LPSPNHM.4} \tag{\ref*{T:LPSPNHM}.4}
\end{align*}
Therefore $s_{\nu}'<|z_{\nu}|<s_{\nu}$, $|z_{\nu}|\to 1$ as $\nu\to\infty$ and by (\ref{LPSPNHM.3}) and (\ref{LPSPNHM.4}), $\displaystyle\frac{f^\star(z_{\nu})}{\varphi(|z_{\nu}|)}\to \infty$, as $\nu\to\infty$. Take
\begin{equation*}
\rho_{\nu} =\frac{1}{M_{\nu}}\left(1-\frac{|z_{\nu}|}{s_{\nu}}\right)^{1/2}\left(1-\frac{s_{\nu}'}{|z_{\nu}|}\right)^{1/2}=\frac{\varphi(|z_{\nu}|)}{f^\star(z_{\nu})}.
\end{equation*}
Then $\rho_{\nu}  \to 0$. From
\begin{equation*} \label{LPSPNHM.5} \tag{\ref*{T:LPSPNHM}.5}
1-|z_{\nu}|>1-s_{\nu}\geq\frac{1}{3}(s_{\nu}-s_{\nu}^\prime) \geq \frac{1}{3}(|z_{\nu}|-s_{\nu}^\prime).
\end{equation*}
Since $1-|z_{\nu}|>s_{\nu}-|z_{\nu}|$ and $1/3<s_{\nu}'<|z_{\nu}|<s_{\nu}$,
\begin{align*}
\begin{split}
\frac{\rho_{\nu} }{1 - |z_{\nu}|} &= \frac{1}{(1 - |z_{\nu}|)M_{\nu}} \left(1-\frac{|z_{\nu}|}{s_{\nu}}\right)^{1/2} \left(1-\frac{s_{\nu}'}{|z_{\nu}|}\right)^{1/2} \\
&\leq \frac{(s_{\nu} - |z_{\nu}|)^{1/2}(|z_{\nu}| - s_{\nu}')^{1/2}}{M_{\nu}(s_{\nu} - |z_{\nu}|)^{1/2}\left(\frac{1}{3}(|z_{\nu}| - s_{\nu}')\right)^{1/2} s_{\nu}^{1/2} |z_{\nu}|^{1/2}} \\
&= \frac{\sqrt{3}}{M_{\nu} s_{\nu}^{1/2} |z_{\nu}|^{1/2}} \leq \frac{12\sqrt{3}}{c} := \frac{1}{2R}.
\end{split}\label{LPSPNHM.6} \tag{\ref*{T:LPSPNHM}.6}
\end{align*}
The harmonic mappings $g_{\nu}(\zeta)=f\left(z_{\nu}+\frac{\rho_{\nu} \zeta}{\varphi(|z_{\nu}|)}\right)$ are defined for $|\zeta|<R$.
Then $g_{\nu}^\star(0)=\left(\dfrac{\rho_{\nu} }{\varphi(|z_{\nu}|)}\right)f^\star(z_{\nu})=1$. To 
show that the sequence $\{g_{\nu}(\zeta)\}$ is normal. Let us consider
\begin{align*} g_{\nu}^\#(\zeta)&=\frac{\rho_{\nu} }{\varphi(|z_{\nu}|)}f^\#\left(z_{\nu}+\frac{\rho_{\nu} \zeta}{\varphi(|z_{\nu}|)}\right)\\ &=\dfrac{\rho_{\nu} \varphi\left(\left|z_{\nu}+\frac{\rho_{\nu} \zeta}{\varphi(|z_{\nu}|)}\right|\right)}{\varphi(|z_{\nu}|)} \dfrac{f^\#\left(z_{\nu}+\frac{\rho_{\nu} \zeta}{\varphi(|z_{\nu}|)}\right)}{\varphi\left(\left|z_{\nu}+\frac{\rho_{\nu} \zeta}{\varphi(|z_{\nu}|)}\right|\right)}\\ &= \dfrac{\varphi\left(\left|z_{\nu}+\frac{\rho_{\nu} \zeta}{\varphi(|z_{\nu}|)}\right|\right)}{\varphi(|z_{\nu}|)} \left(\rho_{\nu} \dfrac{f^\#\left(z_{\nu}+\frac{\rho_{\nu} \zeta}{\varphi(|z_{\nu}|)}\right)}{\varphi\left(\left|z_{\nu}+\frac{\rho_{\nu} \zeta}{\varphi(|z_{\nu}|)}\right|\right)}\right)\to 1.\end{align*}
Thus $\{g_{\nu}(\zeta)\}$ is normal in $|\zeta|<R$ and has a convergent subsequence which converges to a 
harmonic mapping $g$ in $|\zeta|<R$ with $g^\#(0)=1\neq0$. So $g$ is non-constant. \smallskip

Conversely, suppose that $g_{\nu}(\zeta)=f\left(z_{\nu}+\frac{\rho_{\nu} \zeta}{\varphi(|z_{\nu}|)}\right)$ converges locally 
uniformly to $g(\zeta)$ in $|\zeta|<R$, where $g$ is non-constant and $\displaystyle\frac{\rho_{\nu} }{1-|z_{\nu}|}<\frac{1}{2R}$. Now, $g_{\nu}(\zeta)=f_{\nu}\left(z_{\nu}+\frac{\rho_{\nu} \zeta}{\varphi(|z_{\nu}|)}\right)$ are defined for 
$\displaystyle|\zeta|<R$.
\begin{align*}
    g_{\nu}^\star(\zeta)=&\frac{\rho_{\nu} }{\varphi(|z_{\nu}|)}f^\star\left(z_{\nu}+\frac{\rho_{\nu} \zeta}{\varphi(|z_{\nu}|)}\right)\\
    =& \dfrac{\rho_{\nu} \varphi\left(\left|z_{\nu}+\frac{\rho_{\nu} \zeta}{\varphi(|z_{\nu}|)}\right|\right)}{\varphi(|z_{\nu}|)} \dfrac{f^\star\left(z_{\nu}+\frac{\rho_{\nu} \zeta}{\varphi(|z_{\nu}|)}\right)}{\varphi\left(\left|z_{\nu}+\frac{\rho_{\nu} \zeta}{\varphi(|z_{\nu}|)}\right|\right)} \to 0\ \text{ as }\ \nu\to\infty.
\end{align*}
As $f$ is strongly $\varphi$-normal, so $|z|\to\partial\mathbb D$, \begin{equation*}\frac{f^\star(z)}{\varphi(|z|)}\to 0.\end{equation*} 
So $\{g_{\nu}\}$ is strongly $\varphi$-normal, thus $g_{\nu}^\star(\zeta)\to 0$, where $g$ is constant, which is a contradiction. 
\end{proof}
\begin{proof}[{\bf Proof of Theorem~\ref{T:LPSPNLHM}}]
The proof is identical to that of the previous theorem,
mutatis mutandis.
\end{proof}
\section*{Acknowledgement}
The authors would like to thank Prof. Sanjay Kumar Pant for their valuable discussions and suggestions. The second author is supported by Junior Research Fellowship from UGC (NTA Reference Number 241610000551).
\section*{Funding} The authors received no financial support for the research, authorship and/or publication of this article. 
\section*{Statement and Declaration}
\subsection*{Author Contribution} All authors contributed equally to this work.
\subsection*{Data Availability} Not applicable.
\subsection*{Conflict of Interest} The authors declare that they have no conflicts of interest related to the content of this article.
\subsection*{Consent to Participate} Not applicable.
\subsection*{Consent to Publish} Not applicable.
\bibliographystyle{amsplain}

\end{document}